\documentclass[11pt,letterpaper,reqno]{amsart}
\usepackage{fullpage}
\usepackage{amsmath,amsthm,amsfonts,amssymb,amscd}
\usepackage{empheq}
\usepackage{thmtools}
\usepackage{hyperref}
\usepackage{makecell}
\usepackage{enumerate}
\usepackage{enumitem}
\usepackage{bbm}
\usepackage{array}
\usepackage{float}
\usepackage{graphicx}

\usepackage{bbm}
\usepackage{stmaryrd}

\allowdisplaybreaks

\let\subsectiontemp\subsection
\renewcommand{\subsection}[1]{ 
    \subsectiontemp{#1} \hfill\vspace{0.5\linespacing} 
} 

\hypersetup{
  colorlinks=true,
  linkcolor=blue,
  citecolor=blue,
  linkbordercolor={0 0 1}
}

\newtheorem{theorem}{Theorem}[section]

\newtheorem{lemma}[theorem]{Lemma}

\newtheorem{proposition}[theorem]{Proposition}
\newtheorem{example}[theorem]{Example}

\mathtoolsset{showonlyrefs=true}
\numberwithin{equation}{section}
\numberwithin{figure}{section}
\numberwithin{table}{section}

\newcommand{\lrp}[1]{\left(#1\right)}
\newcommand{\lrb}[1]{\left[#1\right]}
\newcommand{\lrcb}[1]{\left\{#1\right\}}
\newcommand{\lrfloor}[1]{\left\lfloor #1 \right\rfloor}

\newcommand{\pttmatrix}[4]{
\left(
\begin{smallmatrix}
  #1 & #2 \\
  #3 & #4
\end{smallmatrix}
\right)
}

\newcommand{\ZZ}{\mathbb{Z}}
\newcommand{\CC}{\mathbb{C}}
\newcommand{\QQ}{\mathbb{Q}}
\newcommand{\RR}{\mathbb{R}}

\newcommand{\SL}{\text{SL}}

\newcommand{\FF}{\mathcal{F}}
\newcommand{\HH}{\mathcal{H}}
\newcommand{\CM}{\textnormal{CM}}
\newcommand{\RM}{\textnormal{RM}}

\author[E. Ross]{Erick Ross}
\address[E. Ross]{School of Mathematical and Statistical Sciences, Clemson University, Clemson, SC}
\email{erickjohnross@gmail.com}

\author[H. Xue]{Hui Xue}
\address[H. Xue]{School of Mathematical and Statistical Sciences, Clemson University, Clemson, SC}
\email{huixue@clemson.edu}

\keywords{complex multiplication, CM points, RM curves, equidistribution, Linnik problems}

\subjclass{11G15, 11E16}

\title{Equidistribution of CM points and RM curves}

\begin{document}

\begin{abstract}
In 1988, William Duke showed that CM points of fundamental discriminant $D$ are equidistributed in the complex upper half-plane $\mathcal H$ as $D \to -\infty$. He also showed a similar result for RM curves (a positive discriminant analog of CM points).
In this paper, we investigate analogous problems concerning the distribution of CM points and RM curves along fixed geodesics in $\mathcal H$, and around fixed points in $\HH$. Specifically, we show that CM points and RM curves are equidistributed along every fixed rational geodesic in $\mathcal H$, and around every fixed CM point in $\HH$. To prove these results, we solve the aggregate Linnik problem for arbitrary binary quadratic forms.
\end{abstract}

\maketitle

\section{Introduction}

Let $\HH := \lrcb{z = x+iy \in \CC ~:~ y > 0}$ denote the complex upper half plane and $\SL_2(\ZZ)\backslash\HH$ denote the modular surface.
Then let $d\mu_{\text{hyp}} = \frac{3}{\pi} \frac{dx\,dy}{y^2}$ denote the hyperbolic measure on $\HH$ and $ds_\text{hyp} = \frac{\sqrt{dx^2+dy^2}}{y}$ denote the hyperbolic metric on $\HH$.

In \cite{duke}, Duke studied the distribution of CM points over $\HH$.
Specifically, let
 % temporary
$$\FF := \lrcb{z = x+iy \in \HH ~:~ |z| > 1, ~ -\frac{1}{2} \le x \le \frac 12 } \subseteq \HH$$ denote the standard fundamental domain for $\SL_2(\ZZ)\backslash\HH$. 
Then Duke showed that CM points of fundamental discriminant $D$ are $d\mu_\text{hyp}$-equidistributed over $\FF$ as $D \to -\infty$. 
Duke also showed a similar result for RM curves in $\FF$ (a positive discriminant analog of CM points). We note here that all the relevant objects are well-defined both in $\HH$, as well as in the quotient $\SL_2(\ZZ)\backslash\HH$. Hence Duke's work can equivalently be interpreted as an equidistribution result over all of $\HH$, or as an equidistribution result over $\SL_2(\ZZ)\backslash\HH$.

Now, observe that Duke's result concerns the distribution of CM points and RM curves in fixed $2$-dimensional regions in $\HH$.
In this paper, we investigate the analogous problem for $1$-dimensional and $0$-dimensional objects. Specifically, we study the distribution of CM points and RM curves along fixed geodesics in $\HH$, and around fixed points in $\HH$. Duke's results and our results are summarized in the following bulleted list for comparison.

\begin{itemize}
    \item Duke - \cite[Theorem 1 (i)]{duke}: CM points are $d\mu_\mathrm{hyp}$-equidistributed within fixed regions in $\HH$.
    \item Duke - \cite[Theorem 1 (ii)]{duke}: RM curves are $d\mu_\mathrm{hyp}$-equidistributed within fixed regions in $\HH$.
    \item Theorem \ref{thm:equid-CM-geodesic}: CM points are $ds_\mathrm{hyp}$-equidistributed along fixed rational geodesics in $\HH$.
    \begin{itemize}
        \item[$\circ$] This result only holds for rational geodesics since there exists at most one CM point along any given non-rational geodesic.
    \end{itemize}
    \item Theorem \ref{thm:equid-RM-geodesic}: RM curves are $ds_\mathrm{hyp}$-equidistributed along fixed rational geodesics in $\HH$.
    \begin{itemize}
        \item[$\circ$] This result only holds for rational geodesics since there exists at most one RM curve along any given non-rational geodesic.
    \end{itemize}
    \item Theorem \ref{thm:equid-CM-fixedpt}: CM points are $d\theta$-equidistributed around fixed points in $\HH$.
    \item Theorem \ref{thm:equid-RM-fixedpt}: RM curves are $d\theta$-equidistributed around fixed CM points in $\HH$.
    \begin{itemize}
        \item[$\circ$] This result only holds for CM points since there exists at most one RM curve around any given non-CM point. 
    \end{itemize}
\end{itemize}

As with Duke's work, our work can equivalently be interpreted as equidistribution results over $\HH$, or as equidistribution results over $\SL_2(\ZZ)\backslash\HH$; see Section \ref{sec:discussions} for further discussion.

Before stating our results precisely, we first give several definitions.

\textbf{(Binary quadratic forms)}  
A \textit{binary quadratic form} is any bivariate polynomial of the form $Q(x,y) = Ax^2+Bxy+Cy^2$ where $A,B,C \in \ZZ$. 
In this paper, we will mainly just be concerned with the corresponding univariate polynomial $Q(x,1) = Ax^2+Bx+C$ and its roots.
We will usually denote a binary quadratic form by its corresponding triple $(A,B,C)$. 
The \textit{discriminant} of $(A,B,C)$ is $D := B^2-4AC$. We say that a binary quadratic form is \textit{normalized} if $\gcd(A,B,C)=1$ and the first non-zero entry of the triple $(A,B,C)$ is positive. Note that any nonzero binary quadratic form can be normalized by scaling by an appropriate constant (and this normalization does not change, for example, the roots of $Ax^2 + Bx + C$).
% We note that 
% this notion of a normalized binary quadratic form is somewhat non-standard. However, these
% normalized binary quadratic forms
% are correct objects to study in our context because normalized binary quadratic forms exactly parametrize rational geodesics.

\textbf{(Geodesics)} The \textit{geodesics} in $\HH$ with respect to the hyperbolic metric are precisely the half-lines $G_x :=\{x+iy ~:~ y \in (0,\infty)\}$ for $x \in \RR$, and the semicircles $G_{q,r} := \{q + re^{i\theta} ~:~ \theta \in (0,\pi)\}$ for $q\in\RR,r\in \RR^+$. Throughout this entire paper, the word ``half-line" always refers to a vertical half-line in $\HH$ starting at the real axis, and the word ``semicircle" always refers to a semicircle in $\HH$ centered at a point on the real axis.
These geodesics in $\HH$ are precisely the preimages of geodesics in $\SL_2(\ZZ)\backslash\HH$.

\textbf{(Rational geodesics)} 
We say that a geodesic $G_x$ or $G_{q,r}$ is \textit{rational} if its real part $x$ is rational, or if its center $q$ and squared radius $r^2$ are rational, respectively. 
Note that rational geodesics are defined in this way because, for example, a geodesic contains more than one CM point if and only if it is rational.
It turns out that one can parametrize rational geodesics by normalized binary quadratic forms of positive discriminant. In particular, for any normalized binary quadratic form $(A,B,C)$ of positive discriminant, we define $G_{(A,B,C)}$ to be the geodesic with endpoints the root(s) of $Ax^2+Bx+C$. Explicitly, this means that
\begin{align}
    G_{(A,B,C)}
    &= \{z \in \HH ~:~ A|z|^2+B\,\mathrm{Re}(z) + C = 0 \} \label{eqn:GABC-set-formula}\\
    &= 
    \begin{cases}
        \{\lrp{\frac{-C}{B}}+iy ~:~ y \in (0,\infty)\} & \text{if } A=0, \\
        \{\lrp{\frac{-B}{2A}} + \lrp{\frac{\sqrt{B^2-4AC}}{2A}}e^{i\theta} ~:~ \theta \in (0,\pi)\} & \text{if } A>0.
    \end{cases} 
\end{align}
This map defines a bijection between normalized binary quadratic forms of positive discriminant and rational geodesics. We also define the discriminant of $G_{(A,B,C)}$ to be $D := B^2-4AC$.

\textbf{(CM points)} We define a \textit{CM point} to be the root $z \in \HH$ of $az^2+bz+c$, where $(a,b,c)$ is a normalized binary quadratic form of negative discriminant. We denote this CM point by $[a,b,c] := \frac{-b + \sqrt{b^2-4ac}}{2a}$. This parametrization gives a bijection between CM points and normalized binary quadratic forms of negative discriminant. Additionally, we define the discriminant of $[a,b,c]$ to be
the discriminant of the corresponding binary quadratic form.

\textbf{(RM curves)} RM curves are a positive discriminant analog of CM points.
% for binary quadratic forms of positive discriminant. 
Note that if $(a,b,c)$ has positive discriminant with $a \ne 0$, then the roots of $az^2+bz+c$ both lie on the real axis. So we define an \textit{RM curve} to be the semicircle in $\HH$ connecting the roots of $az^2+bz+c$, where $(a,b,c)$ is a normalized binary quadratic form of positive discriminant with $a \ne 0$. We denote this RM curve by 
$$\langle a,b,c\rangle := \lrcb{\text{the semicircle connecting }  \frac{-b \pm \sqrt{b^2-4ac}}{2a}},$$ 
and define the discriminant of $\langle a,b,c \rangle$ to be 
the discriminant of the corresponding binary quadratic form.
We also note here that RM curves happen to be precisely the set of all semicircle rational geodesics in $\HH$ (i.e. the rational geodesics $G_{(a,b,c)}$ with $a\ne0$).

\textbf{(CM points and RM curves along a geodesic)} In this paper, we are interested in studying the distribution of CM points and RM curves \textit{along} a fixed rational geodesic $G$. For CM points, it is obvious what this means; we are interested in the CM points that happen to lie on $G$. However, it is slightly less obvious how one should define RM curves along $G$. It turns out that the natural objects to study are the RM curves that have perpendicular-intersection with $G$. So to be precise, when we discuss the distribution of RM curves \textit{along} $G$, we are referring to the distribution of RM curve perpendicular-intersection points on $G$.

\textbf{(Equidistribution)} 
Let $I$ be an interval and $\mu$ be a Radon measure on $I$. Then a sequence $\{\alpha_n\}_{n \ge 1} \subseteq I$ is said to be \textit{equidistributed over $I$} with respect to the measure $\mu$ if 
\begin{align}
    \frac{ 
        \#\, I_1\cap\{\alpha_n\}_{n \le N} 
    }{
        \#\, I_2\cap\{\alpha_n\}_{n \le N}
    } 
    \longrightarrow \frac{\mu(I_1)}{\mu(I_2)}  \quad \text{as}\quad N \to \infty
\end{align}
for all compact subintervals $I_1 \subseteq I_2 \subseteq I$ with $\mu(I_2) \ne 0$.

% Let $M$ be a smooth manifold and $\mu$ be a Radon measure on $M$. Then a sequence $\{\alpha_n\}_{n \ge 1} \subseteq M$ is said to be \textit{$\mu$-equidistributed} over $M$ if 
% \begin{align}
%     \frac{ 
%         \#\, \Omega_1\cap\{\alpha_n\}_{n \le N} 
%     }{
%         \#\, \Omega_2\cap\{\alpha_n\}_{n \le N}
%     } 
%     \longrightarrow \frac{\mu(\Omega_1)}{\mu(\Omega_2)}  \quad \text{as}\quad N \to \infty
% \end{align}
% for all compact $\mu$-continuity sets $\Omega_1 \subseteq \Omega_2 \subseteq M$ with $\mu(\Omega_2) \ne 0$. 

\textbf{(Hyperbolic equidistribution over a geodesic)} 
In this paper, we show $ds_\text{hyp}$-equidistribution of certain sequences over a given geodesic. We note here how the hyperbolic metric restricts to each type of geodesic.
Points on half-line geodesics $G_x$ are parametrized by their imaginary part, $y = \mathrm{Im} (x+iy)$. In this case, the hyperbolic metric on $G_x$ restricts to $ds_\text{hyp} = \frac{1}{y}dy$.
Points on semicircle geodesics $G_{q,r}$, on the other hand, are parametrized by their arguments (around $q$), $\theta = \mathrm{arg}_q(q+re^{i\theta})$. In this case, the hyperbolic metric on $G_{q,r}$ restricts to $ds_\text{hyp} = \frac{1}{\sin \theta}d\theta$. Hyperbolic equidistribution over a geodesic is then defined via this parametrization, using the above definition.

\textbf{(Angle equidistribution around a point)} 
For a fixed point $p \in \HH$, we also study the distribution of certain points and geodesics around $p$. The geodesics $G$ passing through $p$ can be parametrized by  $\mathrm{ang}_{p}(G)$: the angle at which they pass through $p$, where $\theta=0$ represents the angle pointing straight down towards the real axis.
Note that $\mathrm{ang}_{p}(G)$ can alternatively be interpreted as the argument of $p$ as a point on $G$:
\begin{align}
    \mathrm{ang}_{p}(G_{q,r}) = \arg_q(p) \quad \text{and} \quad \mathrm{ang}_{p}(G_{x}) = 0.
\end{align}

Similarly, for points $z$ around $p$, their corresponding angle from $p$ can be computed as 
\begin{align}
    \mathrm{ang}_{p}(z) &= 
    \begin{cases}
        \mathrm{ang}_{p}(G) & \text{if}\quad  \mathrm{Re}(z) > \mathrm{Re}(p) \text{ or } \mathrm{Re}(z) = \mathrm{Re}(p),\mathrm{Im}(z)<\mathrm{Im}(p) \\
        \mathrm{ang}_{p}(G)+\pi & \text{otherwise},
    \end{cases} \\
    &\text{where $G$ is the unique geodesic connecting $p$ and $z$}.
\end{align}
See Figure \ref{fig:geodesic-angle} for an illustration of these notions of $\mathrm{ang}_p(G)$ and $\mathrm{ang}_p(z)$.
We will show in Theorem \ref{thm:equid-RM-fixedpt} that the RM curves passing though CM points $p$ are equidistributed with respect to the uniform angle measure $d\theta$ (recall that RM curves are just special types of geodesics).
And we will also show in Theorem \ref{thm:equid-CM-fixedpt} that CM points around $p$ are equidistributed with respect to the uniform angle measure $d\theta$.

\par \raggedbottom
\begin{figure}[H]
    \centering
    \includegraphics[width=0.9\textwidth]{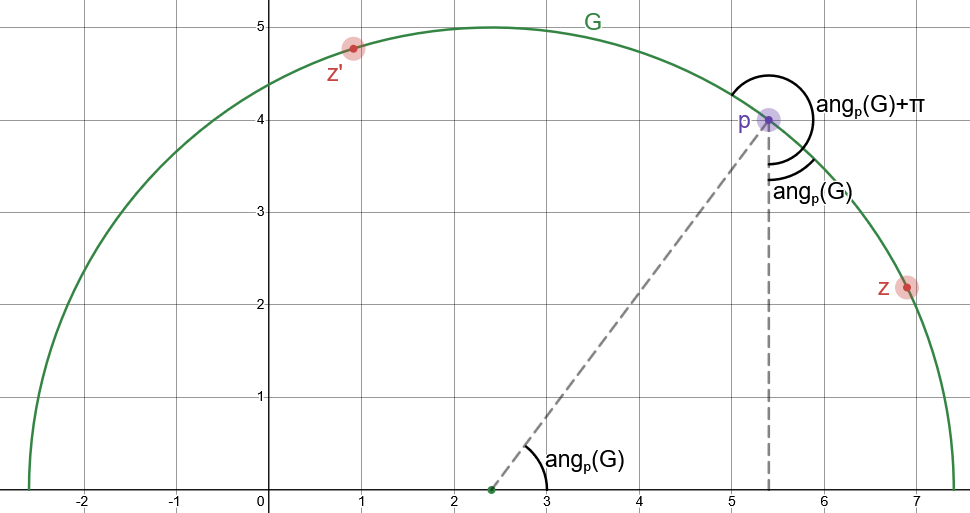}
    \caption{The angles $\mathrm{ang}_{p}(z) = \mathrm{ang}_{p}(G)$ and $\mathrm{ang}_{p}(z') = \mathrm{ang}_{p}(G) + \pi$.}
    \label{fig:geodesic-angle}
\end{figure}
\par \flushbottom

We are now finally able to state our results precisely.
\begin{theorem} \label{thm:equid-CM-geodesic}
    Fix a rational geodesic $G$, and let 
    $\CM^{G}_{\Delta}$ denote the set of CM points along $G$ with discriminant $|D| \le \Delta$. 
    Then as $\Delta \to \infty$, $\CM^{G}_{\Delta}$ is equidistributed along $G$ with respect to the hyperbolic metric.
\end{theorem}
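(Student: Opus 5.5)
The plan is to turn the problem into counting lattice points in a plane region. Write $G = G_{(A,B,C)}$. For a CM point $z=[a,b,c]$ we have $|z|^2 = c/a$ and $\mathrm{Re}(z) = -b/(2a)$. By \eqref{eqn:GABC-set-formula}, $z\in G$ if and only if
\begin{align}
  2Ac - Bb + 2Ca = 0 .
\end{align}
This says that $(a,b,c)$ is orthogonal to $(A,B,C)$ under the bilinear form $\langle\cdot,\cdot\rangle$ attached to the discriminant form $\mathrm{disc}(a,b,c)=b^2-4ac$. Let $L := \{v\in\ZZ^3 : \langle v,(A,B,C)\rangle = 0\}$; it is a rank-$2$ lattice. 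Since $\mathrm{disc}$ has signature $(2,1)$ and $\mathrm{disc}(A,B,C)=D_G>0$, the restriction $q:=\mathrm{disc}|_{L\otimes\RR}$ is an indefinite binary quadratic form, and it is nondegenerate. So the CM points on $G$ of discriminant $|D|\le\Delta$ are in bijection with the primitive vectors $v\in L$ with $-\Delta\le q(v)<0$ and first nonzero coordinate positive. The last condition just picks one of $\pm v$; on the negative cone of $q$ we have $a\neq0$ automatically.

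The position of $z$ on $G$ depends only on the ray $\RR_{>0}v$. Concretely, the negative cone $\{q<0\}$ in the plane $L\otimes\RR$ consists of two opposite open sectors, and the sign normalization selects one of them, call it $S$. There is a continuous map $\phi:S\to G$ that is constant on rays. The next step is to show that, on the hyperbola branch $H=\{v\in S: q(v)=-1\}$, $\phi$ carries the natural hyperbolic-angle measure of $q$ to a constant multiple of $ds_\mathrm{hyp}$. The cleanest route is equivariance. The stabilizer of $(A,B,C)$ in $\mathrm{SO}(\mathrm{disc})\cong \mathrm{PSL}_2(\RR)$ acts on $L\otimes\RR$ through $\mathrm{SO}(q)^\circ$, which is simply transitive on $H$. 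The same group acts on $G$ as the one-parameter group of hyperbolic translations along $G$, which is simply transitive on $G$. The map $\phi$ intertwines these actions, and both measures are the invariant measures of the respective actions, so they agree up to a constant. Alternatively, one can parametrize and compute directly, separately for half-lines and semicircles.

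The counting step is the main analytic input. Fix a compact arc $J\subseteq G$ and let $R_J(\Delta) := \{v\in S : \phi(v)\in J,\ 0<-q(v)\le\Delta\}$. This region is the cone over $\phi^{-1}(J)\cap H$ truncated by the hyperbola $q=-\Delta$, and it equals $\sqrt{\Delta}\,R_J(1)$. In hyperbolic coordinates, its area is $\Delta\cdot c_q\cdot\mu_H(\phi^{-1}(J)) = \Delta\, c'\, s_\mathrm{hyp}(J)$, where $c_q$ and $c'$ are constants depending only on $q$ and $G$. Because $J$ is compact, $R_J(1)$ is a bounded region with rectifiable boundary: two line segments and a hyperbola arc, staying away from the asymptotes. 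So the standard lattice-point count (Lipschitz principle / Davenport) gives
\begin{align}
  \#\big(L\cap R_J(\Delta)\big) = \frac{\mathrm{Area}(R_J(1))}{\mathrm{covol}(L)}\,\Delta + O_J(\sqrt\Delta).
\end{align}
To restrict to primitive vectors, apply M\"obius inversion over $d\mid\gcd$, using $L\cap R_J(\Delta)\supseteq d(L\cap R_J(\Delta/d^2))$ with equality on the multiples-of-$d$ part. This yields the main term $\frac{6}{\pi^2}\cdot\frac{\mathrm{Area}(R_J(1))}{\mathrm{covol}(L)}\Delta$ with error $O(\sqrt\Delta\log\Delta)$.

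Taking ratios for $J_1\subseteq J_2$ then gives $s_\mathrm{hyp}(J_1)/s_\mathrm{hyp}(J_2)$, which is the claim. I expect the main obstacle to be the measure-compatibility step: verifying carefully that $\phi$ pushes the $\mathrm{SO}(q)$-invariant measure on $H$ forward to $ds_\mathrm{hyp}$, in both the half-line case ($ds=dy/y$) and the semicircle case ($ds=d\theta/\sin\theta$). If the equivariance argument proves awkward to make precise, I would instead parametrize $L$ by an explicit basis in each case and compute the area of $R_J(1)$ in the coordinate $y$ or $\theta$. A lesser point to check is that the sign normalization really selects exactly one of the two sectors of the negative cone, which it should since $a$ has constant sign on each sector.
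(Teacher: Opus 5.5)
Your proof is correct. Its arithmetic reduction is the same as the paper's: the orthogonality relation is Lemma \ref{lem:CM-RM-along-G-diophantine}(1). The paper's parametrization $(nS,\,nPb_0+m\tfrac RS,\,nPc_0-m\tfrac QS)$ with $n\ge 1$, $\gcd(m,n)=1$ is just a basis of your $L$ together with your sign and primitivity conditions. (Primitivity in $L$ agrees with $\gcd(a,b,c)=1$ because $L$ is cut out of $\ZZ^3$ by a rational plane.)

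The two routes diverge in the analytic steps.

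\textbf{Counting.} The paper invokes Theorem \ref{thm:equid-WABC}. That theorem is proved by slicing in $n$, counting $m$ coprime to $n$ in an interval (Lemma \ref{lem:count-coprime-range}), and evaluating $\sum\phi(n)$, $\sum\phi(n)/n^2$ and $\sum\phi(n)\sqrt{D+4A\Delta/n^2}$ by Abel summation, separately for each sign pattern of $A,B,D$. Your Lipschitz-principle-plus-M\"obius argument needs no case analysis and gives the same main term. Writing $-q$ in the paper's coordinates $(m,n)$ as a binary form $F$, the sector $\{X\le m/n\le Y,\ n>0,\ 0<F(m,n)\le 1\}$ has area $\tfrac12\int_X^Y dt/F(t,1)$, so $\tfrac{6}{\pi^2}\,\mathrm{Area}\cdot\Delta$ is exactly the paper's $\tfrac{3\Delta}{\pi^2}\int d\mu$. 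Your error term $O(\sqrt\Delta\log\Delta)$ is also slightly better than the paper's $O(\Delta^{1/2+\varepsilon})$.

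\textbf{Measure.} The paper computes explicit reparametrizations $y(t)$ and $\theta(t)=\arccos\bigl((-B-2At)/\sqrt D\bigr)$, separately for half-lines and semicircles. Your equivariance argument handles both cases at once and explains why $ds_{\mathrm{hyp}}$ appears. The stabilizer of $(A,B,C)$ in $\mathrm{PSL}_2(\RR)$ acts on $L\otimes\RR$ with determinant $1$ and on $G$ by translations, so $J\mapsto \mathrm{Area}(R_J(1))$ is a translation-invariant Radon measure on $G$. Equivariance between the two simply transitive actions also shows, for free, that $\phi$ is a bijection from rays onto $G$. So the step you flagged as the main obstacle does go through. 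Just work in the identity component: the full $\mathrm{SO}(\mathrm{disc})$-stabilizer contains $-1$ on $L\otimes\RR$, which swaps the two sectors.

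\textbf{What each route buys.} The paper's explicit route produces the actual constants, for example the count $\frac{3\gcd(D,2)\,\mathrm{length}(\overline G)}{2\pi^2\sqrt D}\Delta$ in Section \ref{sec:discussions}. Your invariance argument leaves this as an unspecified $c'$ unless you also compute $\mathrm{covol}(L)$ and the area of $R_J(1)$. The paper also gets a single stand-alone engine, Theorem \ref{thm:equid-WABC}, which it reuses verbatim for Theorems \ref{thm:equid-RM-geodesic} and \ref{thm:equid-RM-fixedpt}. Your approach would likely adapt to those theorems too, but each would need its own invariance check.
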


We make three remarks about Theorem \ref{thm:equid-CM-geodesic}.
First, we remark that we had previously shown in \cite[Theorem 1.1]{boundary-CM-points} that CM points are $ds_\text{hyp}$-equidistributed on the boundary $\partial \FF$ of the fundamental domain for $\SL_2(\ZZ)\backslash\HH$. That result can be viewed as a special case of Theorem \ref{thm:equid-CM-geodesic}.
Second, observe that this equidistribution theorem differs from Duke's result \cite[Theorem 1]{duke} in that we are considering $\CM$ points for all discriminants $|D| \le \Delta$, instead of for individual discriminants $D$. This is necessary because the sets of CM points for individual discriminants $D$ do not even become dense along a given geodesic. 
Third, we remark that our result shows equidistribution of CM points of general discriminant, whereas \cite[Theorem 1]{duke} shows equidistribution just for CM points of fundamental discriminant.

Next, we show the equidistribution of RM curves along a fixed rational geodesic.
\begin{theorem} \label{thm:equid-RM-geodesic}
    Fix a rational geodesic $G$, and let 
    $\RM^{G}_{\Delta}$ denote the set of RM curves along $G$ with discriminant $D \le \Delta$. 
    Then as $\Delta \to \infty$, $\RM^{G}_{\Delta}$ is equidistributed along $G$ with respect to the hyperbolic metric.
\end{theorem}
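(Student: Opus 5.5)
We reduce to a lattice point count in a plane region, after moving $G$ to the imaginary axis by an isometry. Write $G = G_{(A,B,C)}$ with $D_0 := B^2-4AC > 0$. Two geodesics with forms $(A,B,C)$ and $(a,b,c)$ of positive discriminant meet at angle $\phi$ with
\[
\cos\phi = \frac{2Ac+2Ca-Bb}{\sqrt{D_0 D}}.
\]
This is the normalized polarization of the discriminant form $\mathrm{disc}(a,b,c)=b^2-4ac$, which has signature $(2,1)$. So $\langle a,b,c\rangle$ meets $G$ perpendicularly if and only if $Bb = 2(Ac+Ca)$. These solutions form a rank-$2$ sublattice $L\subseteq\ZZ^3$, namely the integral orthogonal complement of $(A,B,C)$. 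Since $(A,B,C)$ is a positive vector, $\mathrm{disc}|_L$ is an indefinite binary quadratic form with real coefficients. Hence $\RM^G_\Delta$ is in bijection with the primitive vectors $v\in L$, taken up to sign, satisfying $0<\mathrm{disc}(v)\le\Delta$ and having nonzero first coordinate. The vectors in $L$ with first coordinate $0$ lie in a rank $\le 1$ sublattice, and $\mathrm{disc}=b^2$ on it, so they contribute only $O(\sqrt\Delta)$. We will see below that the main term is $\asymp\Delta$, so these can be discarded.

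Next, choose $g\in\SL_2(\RR)$ with $gG = G_0$, the imaginary axis, which is the geodesic of the form $(0,1,0)$. The group $\SL_2(\RR)$ acts on real forms linearly and preserves $\mathrm{disc}$, the pairing, and hyperbolic arclength. Under $g$, the lattice $L$ becomes a lattice $L'$ inside the plane $\{b=0\}$. For a real form $(a,0,-c)$ with $ac>0$, the perpendicular geodesic is the semicircle of radius $\sqrt{c/a}$ centered at $0$. It meets $G_0$ at $i\sqrt{c/a}$, whose hyperbolic coordinate along $G_0$ is $\log y=\tfrac12\log(c/a)$. Since $g$ is an isometry, a compact subinterval $I_1$ of $G$ of hyperbolic length $\ell$ corresponds to a $\log y$-interval $[s,s+\ell]$ on $G_0$. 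In the coordinates $(a,c)$ the count becomes
\[
\#\{\text{primitive } (a,c)\in L',\ 0<4ac\le\Delta,\ e^{2s}\le c/a\le e^{2(s+\ell)}\}.
\]
We choose the sign with $a,c>0$, which takes care of the $\pm v$ identification.

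The region $R_\Delta(s,\ell)=\{a,c>0,\ 4ac\le\Delta,\ c/a\in[e^{2s},e^{2s+2\ell}]\}$ is $\sqrt\Delta$ times the fixed region $R_1(s,\ell)$. Its area is computed with the substitution $u=ac$, $w=\tfrac12\log(c/a)$, whose Jacobian is $|\partial(u,w)/\partial(a,c)|=1/a\cdot a\cdot\ldots=1$ up to a constant. This gives $\mathrm{area}(R_\Delta)=\kappa\,\Delta\,\ell$ for an absolute constant $\kappa$.

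Because the ratio $c/a$ is bounded above and below on $R_1$, the region $R_1$ is compact with rectifiable boundary and stays away from the axes. Lipschitz's principle therefore gives
\[
\#(L'\cap R_\Delta)=\frac{\kappa\Delta\ell}{\mathrm{covol}(L')}+O_{s,\ell}(\sqrt\Delta).
\]
A Möbius inversion over $L'$, summing over $m$ the counts of $mL'\cap R_\Delta$, then yields
\[
\frac{6}{\pi^2}\cdot\frac{\kappa\Delta\ell}{\mathrm{covol}(L')}+O(\sqrt\Delta\log\Delta)
\]
primitive points. Taking ratios for $I_1\subseteq I_2$ gives $\ell_1/\ell_2 = \mu_{\mathrm{hyp}}(I_1)/\mu_{\mathrm{hyp}}(I_2)$, which is the claimed equidistribution.

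The main obstacle is the bookkeeping in the first two steps. One must check that the perpendicular-intersection condition and the intersection point transform correctly under $g$ (via the $\SL_2(\RR)$-equivariance of the pairing). One must also confirm that normalization, meaning primitivity in $\ZZ^3$ rather than in $L$, matches primitivity in $L$. Since $L=(A,B,C)^\perp\cap\ZZ^3$ is saturated, a vector of $L$ is primitive in $L$ exactly when it is primitive in $\ZZ^3$. Once these checks are done, the count is a routine lattice point estimate in a homogeneously expanding compact region.
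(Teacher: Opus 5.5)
Your proposal is correct, and it reaches the result by a genuinely different route from the paper.

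\textbf{The paper's route.} The paper stays in integer coordinates throughout. From Lemma~\ref{lem:CM-RM-along-G-diophantine}(2) it explicitly parametrizes the normalized solutions of $2aC+2cA=bB$ as $(nS,\ nPb_0+m\tfrac RS,\ nPc_0-m\tfrac QS)$ with $n\ge 1$, $\gcd(m,n)=1$. The discriminant then becomes a fixed binary form in $(m,n)$, and the intersection angle is written as an explicit function of $t=m/n$. The paper then appeals to the aggregate Linnik result, Theorem~\ref{thm:equid-WABC}. That theorem is proved case by case (half-lines and semicircles need different sign cases) by summing over $n$, using the coprime-count lemma and Abel summation against $\sum\phi(n)$. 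This produces explicit logarithmic antiderivatives, which are finally pushed forward to $\frac{2}{\sqrt D}\frac{d\theta}{\sin\theta}$.

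\textbf{Your route.} You use the $\SL_2(\RR)$-invariance of the discriminant pairing to move $G$ to the imaginary axis. There the restricted discriminant becomes the split form $4ac$, and the arclength coordinate of the foot point becomes the hyperbolic angle $\frac12\log(c/a)$. The whole count is then a primitive-lattice-point count in a dilated hyperbolic sector, handled by Lipschitz plus M\"obius. Your saturation remark correctly matches primitivity in $L'$ with $\gcd(a,b,c)=1$.

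\textbf{Comparison.} Your approach treats half-lines and semicircles uniformly and needs no explicit parametrization or antiderivatives. It also explains conceptually why $ds_{\mathrm{hyp}}$ appears: it is the boost-invariant measure $d\log(c/a)$. It even gives the slightly sharper error $O(\sqrt\Delta\log\Delta)$. The paper's route buys one concrete statement about coprime pairs that serves its three geodesic/CM-point theorems at once. Your method proves that statement too. The region $\{n>0,\ m/n\in[X,Y],\ 0<Q(m,n)\le\Delta\}$ has area $\frac{\Delta}{2}\int_X^Y\frac{dt}{Q(t,1)}$. So Lipschitz plus M\"obius gives exactly the paper's main term $\frac{3\Delta}{\pi^2}\int_X^Y d\mu$ in every sign case, bypassing the six separate computations.

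\textbf{Tidy-ups.} The Jacobian of $(a,c)\mapsto(ac,\tfrac12\log(c/a))$ is exactly $c\cdot\frac{1}{2c}+a\cdot\frac{1}{2a}=1$, so the area is $\ell\Delta/4$ and $\kappa=\frac14$. Rather than quoting the cosine formula, you can get the perpendicularity criterion from the same invariance: on the imaginary axis, perpendicular means centered at $0$, i.e.\ $b=0$. Alternatively, simply cite Lemma~\ref{lem:CM-RM-along-G-diophantine}(2).
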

Note that this theorem claims the equidistribution of RM curves along $G$.
To be precise, recall that this terminology is referring to the equidistribution along $G$ of the intersection points for RM curves that have perpendicular intersection with $G$.

Next, we show equidistribution of RM curves around a fixed CM point. 
\begin{theorem} 
    \label{thm:equid-RM-fixedpt}
    Fix a CM point $p$, and let 
    $\RM^p_{\Delta}$ denote the set of RM curves passing through $p$ with discriminant $D \le \Delta$. 
    Then as $\Delta \to \infty$, the angles $\mathrm{ang}_p\, \RM^p_{\Delta}$ are equidistributed with respect to the uniform angle measure.
\end{theorem}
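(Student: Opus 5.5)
The plan is to turn the condition ``$\langle A,B,C\rangle$ passes through $p$'' into a rank-$2$ sublattice of $\ZZ^3$. On that lattice the discriminant restricts to a positive definite binary quadratic form, and $\mathrm{ang}_p$ becomes a linear function of the polar angle in suitable coordinates. Then $\RM^p_\Delta$ is essentially the set of primitive lattice points in a growing ellipse, counted up to sign, and equidistribution reduces to counting lattice points in sectors of an ellipse.

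\textbf{Step 1 (linear condition).} Write $p=[a,b,c]$ with $d:=4ac-b^2>0$. By \eqref{eqn:GABC-set-formula}, $\langle A,B,C\rangle$ passes through $p$ iff $A|p|^2+B\,\mathrm{Re}(p)+C=0$. Since $\mathrm{Re}(p)=-b/(2a)$ and $|p|^2=c/a$, this becomes
\begin{align}
2cA-bB+2aC=0 .
\end{align}
So the RM curves through $p$ correspond to normalized triples $(A,B,C)$ with $A\neq 0$ in the rank-$2$ lattice $L:=\{(A,B,C)\in\ZZ^3 : 2cA-bB+2aC=0\}$.

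\textbf{Step 2 (the form on $L$).} Let $q(A,B,C)=B^2-4AC$. This form has signature $(2,1)$ on $\RR^3$, and $L$ is its orthogonal complement, for the associated bilinear form, of the vector $(a,b,c)$, which has $q<0$. Hence $q|_{L\otimes\RR}$ is positive definite. This also shows directly that every nonzero element of $L$ has positive discriminant, so it defines a genuine geodesic through $p$.

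\textbf{Step 3 (equivariance, the main obstacle).} I need to show that, in coordinates $(u,v)$ on $L\otimes\RR$ in which $q=u^2+v^2$, the angle $\mathrm{ang}_p(G_{(A,B,C)})$ equals $\phi+\text{const} \pmod \pi$, where $\phi$ is the polar angle. The antipodal vectors $\pm v$ give the same geodesic, consistent with $\phi$ living mod $2\pi$ and the geodesic angle mod $\pi$.

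The route I would take first:
\begin{itemize}
\item Let $K_p\cong SO(2)$ be the stabilizer of $p$ in $\SL_2(\RR)$, the conjugate of $SO(2)$ by an element $g$ with $g\,i=p$.
\item $K_p$ acts on forms by the symmetric-square representation. This action preserves $q$ and fixes $(a,b,c)$ up to scaling, so it acts on the plane $L\otimes\RR$ by rotations.
\item The element corresponding to rotation angle $\theta$ in $SO(2)$ is rotation by $2\theta$ about $p$ in the hyperbolic plane. It therefore rotates tangent directions at $p$, and hence $\mathrm{ang}_p$ of geodesics, by $2\theta$.
\item On $L\otimes\RR$ the same element acts as a rotation by $\pm 2\theta$. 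This follows by transporting to $p=i$, where $L\otimes\RR=\{(A,B,-A)\}$ and the computation is explicit.
\end{itemize}
Comparing the two actions gives $\mathrm{ang}_p = \pm\phi+\text{const} \pmod\pi$, so the uniform measure in $\phi$ pushes forward to the uniform measure $d\theta$. If the group-theoretic argument is awkward, the fallback is a direct computation at $p=i$ followed by conjugation.

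\textbf{Step 4 (counting).} Fix an arc $J$ of angles and let $S_J\subseteq L\otimes\RR$ be the union of the two antipodal sectors mapping to $J$.
\begin{itemize}
\item By the standard lattice-point count in homogeneously expanding sectors of an ellipse, $\#\{v\in L\cap S_J : q(v)\le\Delta\}=\frac{|J|}{\pi}\cdot\frac{\pi\Delta}{\mathrm{covol}(L)\sqrt{\det}}+O(\sqrt\Delta)$.
\item Restricting to primitive vectors, via M\"obius inversion over dilates, multiplies the main term by $1/\zeta(2)$, uniformly in $J$.
\item Normalization selects exactly one of $\pm v$, halving the count uniformly.
\item The excluded vectors with $A=0$ lie on a single line in $L$, so they contribute $O(\sqrt\Delta)$.
\end{itemize}
Hence $\#\{G\in\RM^p_\Delta:\mathrm{ang}_p(G)\in J\}\sim c_p|J|\Delta$, and taking ratios for $I_1\subseteq I_2$ gives the theorem.
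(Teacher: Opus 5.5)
Your proposal is correct. After the first step it takes a genuinely different route from the paper.

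\textbf{Where the two agree and where they diverge.} Your Step 1 is exactly the paper's Lemma \ref{lem:CM-RM-along-G-diophantine}(3). From there the paper works entirely explicitly:
\begin{itemize}
\item it parametrizes the solution lattice as $(nS,\,nPb_0+m\tfrac RS,\,nPc_0-m\tfrac QS)$ with $n\ge1$ and $\gcd(m,n)=1$, so the discriminant becomes the positive definite binary form $Am^2+Bmn+Cn^2$;
\item it computes $\cos\theta=\frac{2At+B}{2\sqrt A\sqrt{At^2+Bt+C}}$, where $t=m/n$;
\item it invokes Theorem \ref{thm:equid-W--A>0,D<0}, which is proved by fibering over $n$, counting $m$ coprime to $n$ in an interval, and Abel-summing $\phi(n)$;
\item it changes variables, sending $dt/(At^2+Bt+C)$ to $\frac{2}{\sqrt{-D}}\,d\theta$.
\end{itemize}
You replace the explicit angle formula with a conformality/equivariance argument. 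You replace the paper's aggregate Linnik theorem with the standard count of primitive lattice points in dilated convex sectors, plus M\"obius inversion.

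\textbf{Why these are the same computation.} The paper's formula says precisely that $\theta$ is the polar angle in the $q$-orthonormal coordinates $u=\sqrt A\,(m+\tfrac{B}{2A}n)$, $v=\tfrac{\sqrt{-D}}{2\sqrt A}\,n$. Its measure $\frac{dt}{At^2+Bt+C}$ is the pushforward of area on the ellipse to the slope $m/n$. So your Step 3 claim (angle $=$ polar angle $+$ const) holds there with constant $0$.

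\textbf{What each approach buys.} Your version explains why the limit measure is uniform. It avoids choosing $b_0,c_0$, gets positivity of the discriminant on $L$ directly from the signature, and yields the slightly better error $O(\sqrt\Delta\log\Delta)$ rather than $O(\Delta^{1/2+\varepsilon})$.

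The paper's version builds one explicit tool, Theorem \ref{thm:equid-WABC}, that handles definite and indefinite restricted forms alike. It is reused for Theorems \ref{thm:equid-CM-geodesic} and \ref{thm:equid-RM-geodesic}, where $q|_L$ is indefinite. There your rotation argument would have to be replaced by the hyperbolic-translation stabilizer of $G$ acting on $L\otimes\RR$ by boosts. The paper's route also hands over the explicit constants used in Section \ref{sec:discussions} without a covolume computation.
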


Finally, in Theorem \ref{thm:equid-CM-fixedpt}, we prove equidistribution of CM points around any fixed point in $\HH$. It turns out that this theorem follows from Duke's original equidistribution result. For this reason, we have chosen to state Theorem \ref{thm:equid-CM-fixedpt} in terms of CM points of fundamental discriminant $D$. However, we remark here that in symmetry with Theorems \ref{thm:equid-CM-geodesic} - \ref{thm:equid-RM-fixedpt}, the same result also holds for CM points of discriminant $|D| \le \Delta$. This follows from an aggregate version of Duke's result; specifically the aggregate Linnik problem for $Q(a,b,c)=4ac-b^2$ (see the discussion above Theorem \ref{thm:equid-WABC}).
\begin{theorem} 
    \label{thm:equid-CM-fixedpt}
    Fix $z_0 \in \HH$ and $s_0 \in \RR^+$, and for negative fundamental discriminants $D$, let 
    $\CM^{\overline B(z_0, s_0)}_{D}$ denote the set of CM points of discriminant $D$
    within hyperbolic distance $s_0$ of $z_0$.
    Then as $D \to -\infty$, the angles $\mathrm{ang}_{z_0}\, \CM^{\overline B(z_0,s_0)}_{D}$ are equidistributed with respect to the uniform angle measure.
\end{theorem}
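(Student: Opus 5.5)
The plan is to deduce Theorem \ref{thm:equid-CM-fixedpt} directly from Duke's theorem \cite[Theorem 1 (i)]{duke}, viewed as an equidistribution statement on $\HH$ for $\SL_2(\ZZ)$-invariant sets of CM points. Fix $z_0$ and $s_0$. For $0 \le \theta_1 < \theta_2 \le 2\pi$, consider the hyperbolic sector
\begin{align}
    S(\theta_1,\theta_2) := \{ z \in \overline B(z_0,s_0) ~:~ \mathrm{ang}_{z_0}(z) \in [\theta_1,\theta_2] \}.
\end{align}
Equidistribution of the angles with respect to $d\theta$ is equivalent to
\begin{align}
    \frac{\#\,\CM_D \cap S(\theta_1,\theta_2)}{\#\,\CM_D \cap \overline B(z_0,s_0)} \longrightarrow \frac{\theta_2-\theta_1}{2\pi}
\end{align}
as $D \to -\infty$ through fundamental discriminants. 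Here $\CM_D$ is the set of all CM points in $\HH$ of discriminant $D$. This set is $\SL_2(\ZZ)$-invariant, since the action preserves discriminant and normalization up to sign.

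\textbf{Step 1: transfer Duke's theorem from $\FF$ to bounded regions of $\HH$.} Take a bounded region $R \subseteq \HH$ whose boundary has $\mu_{\text{hyp}}$-measure zero. It is covered by finitely many translates $\gamma\FF$, so it splits into finitely many pieces $R \cap \gamma\FF$. Pulling each piece back by $\gamma^{-1}$ gives a subset of $\FF$ with null boundary. Since $\CM_D$ is $\SL_2(\ZZ)$-invariant, Duke's result on $\FF$ then gives
\begin{align}
    \frac{\#\,\CM_D \cap R}{h(D)} \longrightarrow \frac{\mu_{\text{hyp}}(R)}{\mu_{\text{hyp}}(\FF)}.
\end{align}
Here $h(D)$ counts the CM points of discriminant $D$ in $\FF$, with the usual boundary identifications. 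The boundary of each piece lies in $\partial R \cup \bigcup \gamma\partial\FF$, which is null, so each piece is a continuity set. Elliptic points of order $2$ and $3$, with their weights, contribute $O(1)$ points, which is negligible against $h(D)\to\infty$.

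\textbf{Step 2: apply this to sectors and balls.} Both the sectors $S(\theta_1,\theta_2)$ and the ball $\overline B(z_0,s_0)$ are bounded with piecewise smooth boundaries, so Step 1 applies to each. Taking the ratio of the two limits gives
\begin{align}
    \frac{\mu_{\text{hyp}}(S(\theta_1,\theta_2))}{\mu_{\text{hyp}}(\overline B(z_0,s_0))}.
\end{align}
The hyperbolic measure is rotationally symmetric about any point. Concretely, in geodesic polar coordinates centered at $z_0$ one has $d\mu_{\text{hyp}} \propto \sinh s\,ds\,d\theta$, where $\theta$ is exactly $\mathrm{ang}_{z_0}$ up to a fixed orientation convention. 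The ratio therefore equals $(\theta_2-\theta_1)/2\pi$. A compact subinterval $I_1 \subseteq I_2$ of angles is handled by taking the quotient of the two corresponding sector statements.

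\textbf{Main obstacle.} The substantive point is Step 1, namely making rigorous that Duke's equidistribution on $\FF$ implies counting asymptotics in arbitrary bounded continuity sets of $\HH$. This requires the $\SL_2(\ZZ)$-invariance of $\CM_D$ and some bookkeeping at points of $\partial\FF$ lying in several translates. I would handle it by working on $\SL_2(\ZZ)\backslash\HH$ with multiplicity: a point of $R$ is counted once for each piece containing it, and the overcount is confined to a null boundary set. Since Duke's theorem is weak convergence, it yields $o(h(D))$ points near a null-boundary set, and this disposes of the overcount.
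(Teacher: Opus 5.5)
Your proposal is correct and follows the paper's argument. You apply Duke's equidistribution theorem to the sector and to the whole ball, and use that the hyperbolic area of $\{z \in \overline B(z_0,s_0) : \theta_1 \le \mathrm{ang}_{z_0}(z) \le \theta_2\}$ is proportional to $\theta_2-\theta_1$; the paper gets this from an explicit Jacobian computation (Lemma \ref{lem:area-of-R}) where you invoke geodesic polar coordinates, and your Step 1 (transferring Duke's result from $\FF$ to bounded continuity sets in $\HH$ via $\SL_2(\ZZ)$-invariance) spells out a step the paper takes for granted.
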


Near the beginning  of the introduction, we gave a bulleted list showing the symmetry between \cite[Theorem 1]{duke} and Theorems \ref{thm:equid-CM-geodesic} - \ref{thm:equid-CM-fixedpt}. We would also like to point out four additional symmetries between Theorems \ref{thm:equid-CM-geodesic} - \ref{thm:equid-RM-fixedpt}.
\begin{enumerate}
    \item 
    Theorems \ref{thm:equid-CM-geodesic} and \ref{thm:equid-RM-geodesic} show that CM points/RM curves are equidistributed along each fixed rational geodesic. From the (in some sense) inverse perspective, Theorems \ref{thm:equid-RM-geodesic} and \ref{thm:equid-RM-fixedpt} show that rational geodesics are equidistributed around each fixed CM point/RM curve. Note that as stated, Theorems \ref{thm:equid-RM-geodesic} and \ref{thm:equid-RM-fixedpt} just show the equidistribution of RM curves. However, these results can also be viewed as showing equidistribution of rational geodesics since there is at most one half-line geodesic around each fixed CM point/RM curve.
    \item 
    Theorems \ref{thm:equid-CM-geodesic} - \ref{thm:equid-RM-fixedpt} show that CM points are equidistributed along every fixed RM curve, RM curves are equidistributed along every fixed rational geodesic, and rational geodesics are equidistributed around every fixed CM point.
    \item 
    The CM points/RM curves in Theorems \ref{thm:equid-CM-geodesic} - \ref{thm:equid-RM-fixedpt} are all governed by the same Diophantine equation (see Lemma \ref{lem:CM-RM-along-G-diophantine}).
    \item 
    Theorems \ref{thm:equid-CM-geodesic} - \ref{thm:equid-RM-fixedpt} all follow from some form of Theorem \ref{thm:equid-WABC}, discussed below.
\end{enumerate}

To show the above results, we solve the aggregate Linnik problem for arbitrary binary quadratic forms. We give a brief overview of what exactly this terminology means.

Fix a homogeneous integer polynomial $Q(x_1, \ldots x_n)$, and let $$ [(x_1,\ldots x_n)]_Q := \frac{(x_1, \ldots x_n)}{Q(x_1,\ldots x_n)^{1/\mathrm{deg}\, Q}}$$ denote the normalization of the point $(x_1,\ldots x_n) \in \RR^n$ onto the surface $$S_Q := \lrcb{(x_1,\ldots x_n) \in \RR^n : Q(x_1,\ldots x_n)=1}.$$  Then the \textit{classical Linnik problem for $Q$} is to determine the distribution of the points
\begin{align}
    \Big\{[(x_1, \ldots x_n)]_Q : (x_1,\ldots x_n) \in \mathbb{Z}^n,\    Q(x_1,\ldots x_n) = d \Big\}
\end{align}
over $S_Q$ as $d \to \infty$. The Linnik problems have attracted much attention over the years, including recent work by Einsiedler-Lindenstrauss-Michel-Venkatesh \cite{LP-einsiedler-lindenstrauss-michel-venkatesh-2011,LP-einsiedler-lindenstrauss-michel-venkatesh-2012}, Bourgain-Rudnick-Sarnak \cite{LP-bourgain-sarnak-rudnick-1,LP-bourgain-rudnick-sarnak-2}, Aka-Einsiedler-Shapira \cite{LP-aka-einsiedler-shapira},  Khayutin \cite{LP-khayutin}, Blomer-Brumley-Radziwi{\l}{\l} \cite{LP-blomer-brumley-radziwill}, and many others. We also note here that Duke's results on the equidistribution of CM points and RM curves have historically been some of the most important examples of Linnik problems; \cite[Theorem 1 (i) and (ii)]{duke} can be interpreted as the Linnik problems for $Q(a,b,c)=4ac-b^2$ and $Q(a,b,c)=b^2-4ac$, respectively. 

Analogously to the classical Linnik problem for $Q$, the \textit{aggregate Linnik problem for $Q$} is the problem of determining the distribution of the points
\begin{align}
    \Big\{[(x_1, \ldots x_n)]_Q : (x_1,\ldots x_n) \in \mathbb{Z}^n,\   0 < Q(x_1,\ldots x_n) \le \Delta \Big\}
\end{align}
over $S_Q$ as $\Delta \to \infty$. Here, we solve the aggregate Linnik problem for arbitrary binary quadratic forms (in fact, even allowing for non-integer binary quadratic forms). We plan to continue studying the aggregate Linnik problems in general in a future paper \cite{future-paper}. 
\begin{theorem} \label{thm:equid-WABC}
Fix $(A,B,C) \in \RR^3$ with $(A,B) \ne (0,0)$, and let $I \subseteq \widehat \RR$ be the subinterval of the projectively extended reals given by $I = \{t \in \widehat \RR ~:~ At^2+Bt+C > 0\}$. Then as $\Delta \to \infty$,
\begin{align}
    W_{\Delta}^{(A,B,C)} := \Big\{\frac{m}{n} ~:~ n\ge 1,~ \gcd(m,n)=1,~   0 < A m^2 + B mn + C n^2  \le \Delta
    \Big\}
\end{align}
is equidistributed over $I$ with respect to the measure $d\mu = \frac{1}{At^2+Bt+C} \,dt$.
\end{theorem}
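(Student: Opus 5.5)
Our plan is to reduce the statement to a lattice-point count in a sector of the plane. Put $Q(x,y) = Ax^2+Bxy+Cy^2$. Take a compact subinterval $J = [\alpha,\beta] \subseteq I$, and assume first that $\infty \notin J$. The elements of $W_\Delta^{(A,B,C)}$ lying in $J$ are in bijection with the primitive vectors $(m,n) \in \ZZ^2$ such that $n \ge 1$, $m/n \in J$ and $0 < Q(m,n) \le \Delta$. Since $Q(t,1) > 0$ on $J$ and $Q$ is homogeneous of degree $2$, these are exactly the primitive lattice points in the region
\begin{align}
    R_J(\Delta) := \lrcb{(x,y) : y > 0,\ x/y \in J,\ Q(x,y) \le \Delta} = \sqrt{\Delta}\, R_J(1).
\end{align}
The region $R_J(1)$ is a bounded sector-shaped set. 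It is bounded because on the rays $x = ty$ with $t \in J$ we have $Q = y^2 Q(t,1)$, and $Q(t,1)$ is bounded below by a positive constant on the compact set $J$.

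The count therefore follows from the standard asymptotic for primitive lattice points in a dilated region with rectifiable boundary:
\begin{align}
    \#\{\text{primitive } v \in \ZZ^2 \cap \sqrt\Delta R\} = \frac{6}{\pi^2}\,\mathrm{area}(R)\,\Delta + O_R(\sqrt\Delta \log \Delta).
\end{align}
We prove this by Möbius inversion over $d = \gcd(m,n)$, combined with the elementary count $\#(\ZZ^2 \cap \lambda R) = \lambda^2\mathrm{area}(R) + O(\lambda)$ for $\lambda \ge 1$. For the terms with $\sqrt\Delta/d < 1$, we use instead that $\lambda R$ contains $O(1)$ lattice points when $\lambda$ is small, or equivalently we truncate the sum at $d \le \sqrt\Delta$.

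Next we compute the area in polar-type coordinates $x = ty$, for which $dx\,dy = y\,dt\,dy$:
\begin{align}
    \mathrm{area}(R_J(1)) = \int_J \int_0^{Q(t,1)^{-1/2}} y\,dy\,dt = \frac12 \int_J \frac{dt}{At^2+Bt+C} = \frac12\mu(J).
\end{align}
Hence $\#(J \cap W_\Delta) \sim \frac{3}{\pi^2}\mu(J)\,\Delta$. Taking the ratio of this count for $I_1$ and $I_2$ gives the required limit $\mu(I_1)/\mu(I_2)$.

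It remains to treat intervals containing $\infty \in \widehat\RR$. This can happen only when $A > 0$, and then the measure $d\mu$ is smooth near $\infty$, as one sees in the coordinate $s = 1/t$. We handle this case by swapping the roles of $m$ and $n$. A neighbourhood of $\infty$ corresponds to the vectors $(m,n)$ with $n/m$ near $0$, up to sign. The condition $n \ge 1$ picks one representative of each pair $\pm(m,n)$, except at $n = 0$. That case contributes only the single point $1/0$, which requires $0 < A \le \Delta$ and is negligible. The same computation with the sector now around the $x$-axis gives area $\frac12\int ds/Q(1,s)$, which equals $\frac12\mu$ after the change of variables $s = 1/t$. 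An interval containing $\infty$ is split into two pieces, each handled by one of the two charts.

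The main obstacle we anticipate is uniformity of the error term in the lattice-point count when the boundary of $R_J(1)$ is only piecewise smooth. The boundary consists of two rays together with an arc of the conic $Q = 1$, and it is rectifiable whenever $J$ is compact inside $I$. A Lipschitz-boundary lattice count (Davenport's lemma) gives the $O(\lambda)$ error directly. Intervals whose endpoints touch $\partial I$ are excluded by compactness, and the same compactness also makes $\mu(I_2) < \infty$.
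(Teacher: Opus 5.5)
Your argument is correct, and it reaches the theorem by a genuinely different route from the paper.

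\textbf{How the paper proceeds.} The paper never forms a two-dimensional region. It fixes the denominator $n$ and counts the admissible $m$ coprime to $n$ in an explicit interval, using $\#\{m\le T:\gcd(m,n)=1\}=\frac{\phi(n)}{n}T+O(n^{\varepsilon})$. It then evaluates sums of $\phi(n)$, $\phi(n)/n$, $\phi(n)/n^2$ and $\phi(n)\sqrt{D+4A\Delta/n^2}$ by Abel summation. The upper end of the $m$-range is a different function of $n$ according to the signs of $A$, $B$ and $D$. This forces six separate cases, most with an additional $t\mapsto -t$ symmetry step. Each case ends with an explicit antiderivative (logarithmic, arctangent or rational) that must be recognized as $\int_X^Y d\mu$. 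The point $\infty$ is handled simply by allowing $Y=\infty$, where the first sum over $n$ is empty.

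\textbf{What your route buys.} In your approach all of this collapses into the single identity $\mathrm{area}(R_J(1))=\tfrac12\mu(J)$ together with the primitive lattice-point asymptotic. As a result:
\begin{itemize}
\item the argument is uniform in $(A,B,C)$;
\item it makes transparent why $dt/(At^2+Bt+C)$ is the limiting measure (it is twice the area density of the sector under $Q=1$);
\item it gives the slightly sharper error $O(\sqrt\Delta\log\Delta)$;
\item it extends directly to the higher-dimensional aggregate Linnik problems the authors mention.
\end{itemize}
The price is the Lipschitz-boundary (Davenport) lattice count, a heavier input than the paper's purely one-dimensional estimates, plus a second chart at $\infty$. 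Your chart swap there is handled correctly, including the bijection $(m,n)\mapsto \pm(m,n)$ onto a double-sided sector and the single extra point $(1,0)$.

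\textbf{One detail to tidy.} In the M\"obius inversion, the terms vanish only once $d>c\sqrt\Delta$, where $c=\sup_{R_J(1)} y$. Beyond that point $(\sqrt\Delta/d)R_J(1)$ contains no lattice point with $y\ge 1$. So the sum should be truncated at $c\sqrt\Delta$ rather than at $\sqrt\Delta$. The terms with $\sqrt\Delta<d\le c\sqrt\Delta$ are each $O(1)$, so they contribute only $O(\sqrt\Delta)$ in total.
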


Observe that for convenience, we have chosen to write the elements of $W_{\Delta}^{(A,B,C)}$ here as points $\frac{m}{n}$ in the projectively extended reals $\widehat \RR$ instead of as points on $S_{(A,B,C)}$. 
This identification comes from normalizing the lattice points $(x,y)$ onto the surface $\lrcb{(x,y) : y=1}$ instead of onto the surface $S_{(A,B,C)}$. Of course, this identification does not include the point $[(1,0)]_{(A,B,C)}$ (corresponding to the point $\pm \infty \in \widehat \RR$), but this does not affect the distribution in question since $[(1,0)]_{(A,B,C)}$ is only a single point.

The proof of Theorem \ref{thm:equid-WABC} is divided up into six different cases (depending on the signs of $A$, $B$, and $D := B^2-4AC$). Although the final result Theorem \ref{thm:equid-WABC} can be stated in a uniform way, these different cases require separate proofs because each scenario yields a different formula for $I$ and for the integral of $d\mu$. 

\begin{itemize}[leftmargin=*]
\item Theorem \ref{thm:equid-W--A=0,B>0}  shows $d\mu$-equidistribution over $I = (\frac{-C}{B},\infty)$. 
\item Theorem \ref{thm:equid-W--A=0,B<0} shows $d\mu$ equidistribution over $I = (-\infty, \frac{-C}{B})$.  
\item Theorem \ref{thm:equid-W--A>0,D>0} shows $d\mu$-equidistribution over $I = (\frac{-B+\sqrt{D}}{2A}, \infty] \cup [-\infty, \frac{-B-\sqrt{D}}{2A})$. 
\item Theorem \ref{thm:equid-W--A>0,D<0} shows $d\mu$-equidistribution over $I = [-\infty, \infty]$. 
\item Theorem \ref{thm:equid-W--A>0,D=0} shows $d\mu$-equidistribution over $I = (\frac{-B}{2A}, \infty] \cup [-\infty, \frac{-B}{2A})$. 
\item Theorem \ref{thm:equid-W--A<0,D>0} shows $d\mu$-equidistribution over $I = (\frac{-B+\sqrt{D}}{2A}, \frac{-B-\sqrt{D}}{2A})$.
\end{itemize}
Note that as subintervals of the projectively extended reals, the intervals $I=(\frac{-B+\sqrt{D}}{2A},\infty] \cup [-\infty,\frac{-B-\sqrt{D}}{2A})$ and $I = (\frac{-B}{2A}, \infty] \cup [-\infty, \frac{-B}{2A})$ here are understood to wrap around from $\infty$ to $-\infty$.

%%%%%%%%%%%%%%%%%%%%%%%%%%%%%%%%%%%%%

Finally, we give an overview of the paper. 
In Section \ref{sec:dioph-eqn-governing-CM-RM}, we prove Lemma \ref{lem:CM-RM-along-G-diophantine}, which gives a certain Diophantine equation governing the CM points/RM curves in Theorems \ref{thm:equid-CM-geodesic} - \ref{thm:equid-RM-fixedpt}. Next in Section \ref{sec:proof-main-theorems}, we prove Theorems \ref{thm:equid-CM-geodesic} - \ref{thm:equid-RM-fixedpt}, assuming Theorem \ref{thm:equid-WABC}. These proofs of these three theorems are all fairly similar. Next, in Section \ref{sec:proof-of-thm-equid-CM-fixedpt}, we carry out several geometric calculations in $\HH$ and prove Theorem \ref{thm:equid-CM-fixedpt}.
Next, in Sections \ref{sec:equid-WABC--ANT-lemmas} - \ref{sec:equid-WABC--A<0}, we carry out all of the technical equidistribution calculations to prove Theorem \ref{thm:equid-WABC}. These four sections are more or less independent from the rest of the paper (in particular, they will make no reference to  geodesics, CM points, RM curves, or any other objects on $\HH$). In Section \ref{sec:equid-WABC--ANT-lemmas}, we give several analytic number theory lemmas needed for the proof of Theorem \ref{thm:equid-WABC}. 
Then in Sections \ref{sec:equid-WABC--A=0}, \ref{sec:equid-WABC--A>0}, and \ref{sec:equid-WABC--A<0}, we prove Theorem \ref{thm:equid-WABC} for $A = 0$, for $A > 0$, and for $A < 0$, respectively. 
Lastly, in Section \ref{sec:discussions}, we discuss applications of our results.

\section{A Diophantine equation governing the relevant CM points/RM curves} \label{sec:dioph-eqn-governing-CM-RM}

In this section, we prove Lemma \ref{lem:CM-RM-along-G-diophantine}. This lemma shows that the CM points/RM curves in Theorems \ref{thm:equid-CM-geodesic} - \ref{thm:equid-RM-fixedpt} are all governed by the Diophantine equation $2aC+2cA=bB$.

\begin{lemma} \label{lem:CM-RM-along-G-diophantine} ~
\begin{enumerate}
\item 
Fix a rational geodesic $G_{(A,B,C)}$. Then $[a,b,c]$ lies on $G_{(A,B,C)}$ iff  
\begin{align} \label{eqn:cond--CM-on-geo}
2aC+2cA=bB.
\end{align}
\item Fix a rational geodesic $G_{(A,B,C)}$. Then $\langle a,b,c \rangle$ has  perpendicular intersection with $G_{(A,B,C)}$ iff
\begin{align} \label{eq:cond--RM-on-geo}
2aC+2cA=bB.
\end{align}
\item 
Fix a CM point $[A,B,C]$. Then $\langle a,b,c \rangle$ passes through $[A,B,C]$ iff 
\begin{align} \label{eqn:cond--RM-on-CM}
2aC+2cA=bB.
\end{align}
\end{enumerate}
\end{lemma}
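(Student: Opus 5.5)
The plan is to use the set description \eqref{eqn:GABC-set-formula} of geodesics, which turns every condition into an equation in $|z|^2$ and $\mathrm{Re}(z)$. For a point $z=x+iy\in\HH$ and a form $(A,B,C)$, I would write $L_{(A,B,C)}(z) := A|z|^2 + Bx + C$, so that $G_{(A,B,C)} = \{L_{(A,B,C)}=0\}$. Each part then reduces to comparing two such linear conditions.

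\textbf{Part (1).} Let $z=[a,b,c]$. Then $a\neq 0$, since a normalized form of negative discriminant has $a>0$. The root satisfies $x = -b/(2a)$, and $|z|^2 = z\bar z = c/a$ because $z,\bar z$ are the two roots of $az^2+bz+c$. Hence
\begin{align}
L_{(A,B,C)}(z) = \frac{Ac}{a} - \frac{Bb}{2a} + C = \frac{2Ac - Bb + 2aC}{2a},
\end{align}
which vanishes iff $2aC+2cA=bB$.

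\textbf{Part (3).} The point $p=[A,B,C]$ has $|p|^2 = C/A$ and $\mathrm{Re}(p) = -B/(2A)$. Since $a\ne 0$, the curve $\langle a,b,c\rangle$ is $G_{(a,b,c)}$, possibly after normalizing the sign; sign and scaling do not affect the equation. So $p$ lies on it iff $aC/A - bB/(2A) + c = 0$, which is the same identity with the roles of the two forms swapped.

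\textbf{Part (2).} This is the step needing a genuine geometric input, so I would do it more carefully. Two geodesics $\{A|z|^2+Bx+C=0\}$ and $\{a|z|^2+bx+c=0\}$ meet perpendicularly iff their defining circles or lines in $\CC$ are orthogonal. I would verify this in two cases.
\begin{itemize}[leftmargin=*]
\item \emph{Two semicircles} ($A\ne0$). The centers are $-B/(2A)$ and $-b/(2a)$, and the squared radii are $D/(4A^2)$ and $d/(4a^2)$. The orthogonality criterion $|q_1-q_2|^2 = r_1^2+r_2^2$ becomes
\begin{align}
\frac{B^2}{4A^2} - \frac{Bb}{2Aa} + \frac{b^2}{4a^2} = \frac{B^2-4AC}{4A^2} + \frac{b^2-4ac}{4a^2}.
\end{align}
This simplifies to $-Bb/(2Aa) = -C/A - c/a$, i.e. $2aC+2cA=bB$.
\item \emph{Half-line} ($A=0$, $x_0=-C/B$). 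The semicircle is perpendicular iff it is centered at $x_0$, i.e. $-b/(2a) = -C/B$, i.e. $2aC = bB$. This is the claimed equation with $A=0$.
\end{itemize}

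I also need to check that orthogonality of the full circles forces an actual intersection in $\HH$. This holds because orthogonal circles centered on $\RR$ intersect, symmetrically about $\RR$. Conversely, a perpendicular intersection in $\HH$ clearly means the circles are orthogonal. Finally, I would confirm that the identity is homogeneous in each triple, so normalization is harmless. The main obstacle is only this case bookkeeping in part (2); everything else is direct substitution.
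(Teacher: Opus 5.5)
Your proof is correct and follows essentially the same route as the paper. Parts (1) and (3) substitute $\mathrm{Re}(z)=-b/(2a)$ and $|z|^2=c/a$ into the set description $A|z|^2+B\,\mathrm{Re}(z)+C=0$, with the roles of the two forms swapped in (3). Part (2) uses the Pythagorean orthogonality criterion on centers and radii, plus the ``centered at $-C/B$'' criterion for half-lines. Your added check, that $|q_1-q_2|^2=r_1^2+r_2^2$ forces a genuine (non-tangent) intersection point in $\HH$, is a detail the paper leaves implicit.
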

\begin{proof} For part (1), recall that $G_{(A,B,C)} = \lrcb{z \in \HH ~:~ A|z|^2 + B\,\mathrm{Re}(z) + C = 0}$. For $z = [a,b,c]$, observe that
\begin{align}
    A|z|^2 + B\,\mathrm{Re}(z) + C 
    &= A\left| \frac{-b+\sqrt{b^2-4ac}}{2a}\right|^2 + B\,\mathrm{Re}\!\lrp{\frac{-b+\sqrt{b^2-4ac}}{2a}} +C \\
    &= A\lrp{ \frac{b^2}{4a^2} + \frac{4ac-b^2}{4a^2}} + B\lrp{\frac{-b}{2a}} +C \\
    &= A \frac{c}{a} - B \frac{b}{2a} + C \\
    &= \frac{2aC + 2cA - bB}{2a}.
\end{align}
Hence $z = [a,b,c]$ lies on $G_{(A,B,C)}$ iff $2aC + 2cA = bB$.

For part (2) for half-line rational geodesics, recall that $G_{(0,B,C)}$ is the half-line with real part $\frac{-C}{B}$. Then it is easy to see that $\langle a,b,c \rangle$ has perpendicular intersection with $G_{(0,B,C)}$ iff $\langle a,b,c \rangle$ is centered at $\frac{-C}{B}$. This occurs precisely when $\frac{-b}{2a} = \frac{-C}{B}$, i.e. when $2aC + 2c(0) = bB$.

For part (2) for semicircle rational geodesics, let $L$ denote the line segment connecting the centers $G_{(A,B,C)}$ and $\langle a,b,c \rangle$. Then note that $\langle a,b,c \rangle$ has perpendicular intersection with $G_{(A,B,C)}$ precisely when $L$ is the hypotenuse of a right triangle with legs the radii $G_{(A,B,C)}$ and $\langle a,b,c \rangle$. Now, we have that
\begin{align}
    &\text{radius}(G_{(A,B,C)})^2 + \text{radius}(\langle a,b,c \rangle)^2 -  |L|^2  \\
    &= \frac{B^2-4AC}{4A^2} + \frac{b^2-4ac}{4a^2} - \lrp{\frac{-b}{2a} - \frac{-B}{2A}}^2 \\
    &= \frac{4a^2 (B^2-4AC)}{16a^2A^2} + \frac{4A^2 (b^2-4ac)}{16a^2A^2} - \lrp{\frac{2aB-2bA}{4aA}}^2 \\
    &= \frac{4a^2 (B^2-4AC) + 4A^2 (b^2-4ac) - 4a^2B^2 - 4b^2A^2 + 8abAB}{16a^2A^2} \\
    &= \frac{8abAB - 16a^2AC - 16acA^2}{16a^2A^2} \\
    &= \frac{bB - 2aC - 2cA}{2aA}.
\end{align}
Hence by the Pythagorean theorem, $\langle a,b,c \rangle$ has perpendicular intersection with $G_{(A,B,C)}$ precisely when $2aC+2cA=bB$.

Part (3) follows immediately from part (1).
% For part (3), we already have that $\langle a,b,c \rangle$ passes through $[A,B,C]$ iff $2Ac + 2Ca = Bb$ (from part (1)). This condition is then equivalent to the equality $2aC + 2cA = bB$.
\end{proof}

\section{Proofs of Theorems \ref{thm:equid-CM-geodesic} - \ref{thm:equid-RM-fixedpt}.}
\label{sec:proof-main-theorems}

In this section, we prove Theorems \ref{thm:equid-CM-geodesic} - \ref{thm:equid-RM-fixedpt}, assuming Theorem \ref{thm:equid-WABC}.

{
\renewcommand{\thetheorem}{\ref{thm:equid-CM-geodesic}}
\begin{theorem} 
    Fix a rational geodesic $G$, and let 
    $\CM^{G}_{\Delta}$ denote the set of CM points along $G$ with discriminant $|D| \le \Delta$. 
    Then as $\Delta \to \infty$, $\CM^{G}_{\Delta}$ is equidistributed along $G$ with respect to the hyperbolic metric.
\end{theorem}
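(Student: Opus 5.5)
The plan is to reduce Theorem \ref{thm:equid-CM-geodesic} to Theorem \ref{thm:equid-WABC} by parametrizing the CM points on $G$ with a rank-two lattice. Write $G = G_{(A,B,C)}$ with $(A,B,C)$ normalized and $B^2-4AC>0$. By Lemma \ref{lem:CM-RM-along-G-diophantine}(1), the CM points on $G$ are exactly the $[a,b,c]$ with $(a,b,c)$ normalized, $b^2-4ac<0$, and $2aC+2cA-bB=0$. The integer solutions of this linear equation form a saturated rank-two sublattice $L = \ZZ v_1 \oplus \ZZ v_2 \subseteq \ZZ^3$.

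Because $L$ is saturated, $(a,b,c) = m v_1 + n v_2$ is primitive iff $\gcd(m,n)=1$. The quantity $|D| = 4ac-b^2$ then becomes an integral binary quadratic form $Q(m,n) = \alpha m^2 + \beta mn + \gamma n^2$. The triples $\pm(m v_1 + n v_2)$ give the same CM point, and normalization picks one of them; the other sign has $b^2-4ac<0$ with $a<0$, which is not a CM point. Hence, apart from the single pair with $n=0$, the elements of $\CM^G_\Delta$ are in bijection with
\[
W_\Delta^{(\alpha,\beta,\gamma)} = \Big\{ \tfrac{m}{n} : n \ge 1,\ \gcd(m,n)=1,\ 0 < Q(m,n) \le \Delta \Big\}.
\]
I must also check that $(\alpha,\beta)\neq(0,0)$. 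This holds because $Q$ takes positive values (there are CM points on $G$, e.g. the top of the semicircle or a rational point on the half-line) and is not a perfect square times $n^2$ only. Theorem \ref{thm:equid-WABC} then gives equidistribution of these $t=m/n$ over $I=\{Q(t,1)>0\}$ with respect to $dt/Q(t,1)$.

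Next I transport this to $G$. Since $(a,b,c)$ is linear in $(m,n)$, the CM point $z(t)$ depends only on $t = m/n$. The map $t \mapsto z(t)$ is a real-analytic bijection from $I$ onto $G$: every point of $G$ is the root of a real form in $L\otimes\RR$, unique up to scaling. It is monotone in the natural parameter of $G$, so compact subintervals correspond to compact subarcs. It therefore suffices to show that the pushforward of $dt/Q(t,1)$ under $t\mapsto z(t)$ is a constant multiple of $ds_{\mathrm{hyp}}$ on $G$.

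This measure identity is the main point I expect to need care. I would handle it by equivariance. Transforming $(a,b,c)$ by $g\in \SL_2(\RR)$ acts on roots by Möbius maps and preserves the discriminant. Moreover, the pairing $2aC+2cA-bB$ is, up to sign, the polar form of the discriminant, so it is $g$-invariant. A real change of variables is harmless here, since both $dt/Q$ and the projective parameter transform compatibly, and $ds_{\mathrm{hyp}}$ is invariant. So it suffices to treat the imaginary axis, $(A,B,C)=(0,1,0)$. There the condition is $b=0$, so we may take $a=m$, $c=n$, giving $Q(t,1)=4t$ and $z(t)=i\,t^{-1/2}$ on $I=(0,\infty)$. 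Then $ds_{\mathrm{hyp}} = |dy|/y = dt/(2t)$, which is proportional to $dt/Q(t,1) = dt/(4t)$.

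If this equivariance bookkeeping proves awkward, I would fall back on a direct computation in the two cases. For the half-line case I would use $y = \sqrt{4ac-b^2}/(2a)$. For the semicircle case I would use $\cot\theta = (\mathrm{Re}\,z - q)/\mathrm{Im}\,z$, verifying $d\theta/\sin\theta \propto dt/Q(t,1)$. The constant cancels in the ratio defining equidistribution, and the excluded $n=0$ point does not affect limits, which completes the proof.
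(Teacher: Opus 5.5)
Your proposal is correct. Its skeleton is the paper's: Lemma~\ref{lem:CM-RM-along-G-diophantine} gives a linear condition, its primitive solutions are parametrized by coprime $(m,n)$, the discriminant becomes a binary quadratic form in $(m,n)$, Theorem~\ref{thm:equid-WABC} is applied, and $dt/Q(t,1)$ is pushed forward to $G$. The differences lie in the two technical steps.

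\textbf{Parametrization.} The paper fixes an explicit basis via B\'ezout coefficients, $(a,b,c)=(nS,\,nPb_0+m\tfrac RS,\,nPc_0-m\tfrac QS)$, chosen so that $a=nS$. Then $n\ge 1$ is exactly the normalization condition, so there is no exceptional point. The form also lands in a known case of Theorem~\ref{thm:equid-WABC}: $A<0,D>0$ for semicircles and $A=0,B<0$ for half-lines. You take an arbitrary basis of the saturated lattice instead. This costs you the single $n=0$ point, and you must use Theorem~\ref{thm:equid-WABC} in whatever case the basis produces, possibly with $I$ wrapping through $\infty$. That is harmless, since the theorem covers all cases.

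\textbf{Measure identity.} The paper checks it case by case through the explicit substitutions $y(t)$ and $\theta(t)=\arccos\bigl((-B-2At)/\sqrt D\bigr)$. Your argument is conceptual. Some $g\in\SL_2(\RR)$ carries $L\otimes\RR=(A,B,C)^{\perp}$ onto $\{b=0\}$, which gives $M\in\mathrm{GL}_2(\RR)$ with $Q=Q_0\circ M$ and $Q_0(m',n')=4m'n'$. Then $dt/Q(t,1)=|\det M|^{-1}\,dt'/Q_0(t',1)$ with $t'=M\cdot t$, and $ds_{\mathrm{hyp}}$ is $g$-invariant. This treats half-lines and semicircles uniformly and explains why the hyperbolic metric appears. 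The paper only remarks that the half-line case follows from the semicircle case by an $\SL_2(\ZZ)$ map. Since $\mathrm{disc}\,Q=16(\det M)^2$, your route also recovers the paper's constant $2/\sqrt{\mathrm{disc}\,Q}$, which Section~\ref{sec:discussions} needs. The paper's route, in exchange, gives closed formulas with no equivariance bookkeeping.

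\textbf{One justification to repair.} The claim $(\alpha,\beta)\neq(0,0)$ does not follow from $Q$ taking positive values, since $Q=\gamma n^2$ with $\gamma>0$ also does. Instead, note that $L\otimes\RR$ contains two non-proportional null forms attached to the endpoints of $G$: $(1,-2w,w^2)$ for a finite endpoint $w$, and $(0,0,1)$ for the endpoint $\infty$. Equivalently, $L\otimes\RR$ is the polar-orthogonal complement of a vector of positive discriminant. Hence $Q$ is nondegenerate and indefinite, $\mathrm{disc}\,Q>0$, and in particular $(\alpha,\beta)\neq(0,0)$.
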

\addtocounter{theorem}{-1}
}
\begin{proof} ~We divide the proof into two cases, according to whether $G$ is a half-line or a semicircle. 

\noindent\textbf{Case 1: $G$ is a half-line rational geodesic.}

Writing $G$ as $G_{(0,B_0,C_0)}$, recall that we are trying to show that $\mathrm{Im}\, \CM^{G}_{\Delta}$ is equidistributed over $(0,\infty)$ with respect to the metric $ds_\text{hyp} = \frac{1}{y}dy$.

Now, Lemma \ref{lem:CM-RM-along-G-diophantine} states that a CM point $[a,b,c]$ lies on $G_{(0,B_0,C_0)}$ iff $2aC_0=bB_0$. So define $P,Q$ as
\begin{align}
    (P,Q) := \tfrac{\mathrm{sgn}(B_0)}{\gcd(B_0,2)} (2C_0,B_0),
\end{align}
and observe that $\gcd(P,Q)=1$ and $Q \ge 1$. 
Also, $2aC_0=bB_0$ iff $aP=bQ$, so we have that
\begin{align}
    \CM^{G}_{\Delta}  
    &= \Big\{ [a,b,c] ~:~ a \ge 1,~ \gcd(a,b,c)=1,~  aP=bQ,~
    0 > b^2-4ac \ge -\Delta \Big\}.
\end{align}

Then parameterizing the solutions to the Diophantine equation $aP = bQ$, we have that
\begin{align} 
    &\Big\{ (a,b,c) ~:~ a \ge 1,~ \gcd(a,b,c)=1,~ 
    aP=bQ
    \Big\} \\
    &= \lrcb{ (nQ,nP,-m) ~:~ n \ge 1,~ \gcd(m,n)=1 }.
\end{align}
Hence
\begin{align}
    \CM^{G}_{\Delta} 
    &= \Big\{  [nQ,nP,-m]  ~:~ n \ge 1,~ \gcd(m,n)=1,~  0 > n^2P^2 + 4Qnm \ge -\Delta
    \Big\} \\
    &= \Big\{  [nQ,nP,-m]  ~:~ n \ge 1,~ \gcd(m,n)=1,~  0 > B mn + C n^2 \ge -\Delta
    \Big\}, \label{eqn:CM-G0BC-set-formula} \\
    \text{where}&\quad  B := 4Q, \qquad C := P^2,
\end{align}
and so
\begin{align}
    \mathrm{Im}\,\CM^{G}_{\Delta} 
    &= \Big\{  \frac{\sqrt{-4Qnm -P^2n^2}}{2Qn}  ~:~ n \ge 1,~ \gcd(m,n)=1,~  0 > B mn + C n^2 \ge -\Delta
    \Big\} \\
    &= \Big\{  \sqrt{\frac{-4}{B} \lrp{\frac mn} - \frac{4C}{B^2}}  ~:~ n \ge 1,~ \gcd(m,n)=1,~  0 > B mn + C n^2 \ge -\Delta
    \Big\} \\
    &=  \Big\{  y\lrp{\frac mn}  ~:~ n \ge 1,~ \gcd(m,n)=1,~  0 > B mn + C n^2 \ge -\Delta
    \Big\} \\
    &= y(W_{\Delta}^{(0,-B,-C)}), \\
    \text{where }\ y(t) 
    &:= \sqrt{\frac{-4}{B} t - \frac{4C}{B^2}}.
\end{align}

Finally, we know from Theorem \ref{thm:equid-W--A=0,B<0} that $W_{\Delta}^{(0,-B,-C)}$ is $d\mu = \frac{1}{-Bt-C}$ equidistributed over $(-\infty,\frac{-C}{B})$. And under the reparametrization 
$y(t) = \sqrt{\frac{-4}{B} t - \frac{4C}{B^2}}$, this measure $d\mu = \frac{1}{-Bt-C}$ over $t \in (-\infty,\frac{-C}{B})$ becomes $d\mu = \frac{2}{B} \frac{1}{y} dy$ over $y \in (0,\infty)$. Thus we have that $\mathrm{Im}\,\CM^{G}_{\Delta} = y(W_{\Delta}^{(0,-B,-C)})$ is  equidistributed over $(0,\infty)$ with respect to the hyperbolic metric $\frac 1y dy$, completing the proof.

\noindent\textbf{Case 2: $G$ is a semicircle rational geodesic.}

Writing $G$ as $G_{(A_0,B_0,C_0)}$, let $D_0 = B_0^2-4A_0 C_0$, $q=\frac{-B_0}{2A_0}$, $r = \frac{\sqrt{D_0}}{2A_0}$, and recall we are trying to show that $\mathrm{arg}_q\, \CM^{G}_{\Delta}$ is equidistributed over $(0,\pi)$ with respect to the hyperbolic metric $ds_\text{hyp} = \frac{1}{\sin(\theta)} d\theta$.

Now, Lemma \ref{lem:CM-RM-along-G-diophantine} states that a CM point $[a,b,c]$ lies on $G_{(A_0,B_0,C_0)}$ iff $2aC_0+2cA_0=bB_0$. So define $P,Q,R$ as
\begin{align} \label{eqn:def-PQR}
    (P,Q,R) := \tfrac{1}{\gcd(D_0,2)} (-2C_0,-B_0,2A_0) 
\end{align}
and observe that $\gcd(P,Q,R)=1$ and $R \ge 1$.
Also, $2aC_0+2cA_0=bB_0$ iff $aP=bQ+cR$, so we have that
\begin{align}
    \CM^{G}_{\Delta}  
    &= \Big\{ [a,b,c] ~:~ a \ge 1,~ \gcd(a,b,c)=1,~  aP=bQ+cR,~
    0 > b^2-4ac \ge -\Delta \Big\}.
\end{align}

Now, let $S = \gcd(Q,R)$ and choose $b_0,c_0$ such that $b_0Q+c_0R = S$.
Parameterizing the solutions to the Diophantine equation $aP = bQ+cR$, we have that
\begin{align} 
    &\Big\{ (a,b,c) ~:~ a \ge 1,~ \gcd(a,b,c)=1,~ 
    aP=bQ+cR
    \Big\} \\
    &= \lrcb{ (nS,~nPb_0+m\tfrac{R}{S},~nPc_0-m\tfrac{Q}{S}) ~:~ n \ge 1,~ \gcd(m,n)=1 }.
\end{align}
%(This equality can also be verified via a two-way inclusion argument.)
Hence
\begin{align}
    \CM^{G}_{\Delta} 
    &= \Big\{  [nS,~nPb_0+m\tfrac{R}{S},~nPc_0-m\tfrac{Q}{S}]  ~:~ n \ge 1,~ \gcd(m,n)=1,\\
    &\hspace{23mm}  0 >(nPb_0+m\tfrac{R}{S})^2-4(nS)(nPc_0-m\tfrac{Q}{S}) \ge -\Delta
    \Big\} \\
    &= \Big\{[nS,~nPb_0+m\tfrac{R}{S},~nPc_0-m\tfrac{Q}{S}] ~:~ n\ge 1,~ \gcd(m,n)=1,~ \\
    &\hspace{50mm} 0 > A m^2 + B mn + C n^2  \ge -\Delta
    \Big\} \label{eqn:CM-GABC-set-formula} \\
    \text{where}\quad  A&:=\tfrac{R^2}{S^2}, \qquad B := 2Pb_0 \tfrac{R}{S}+4Q, \qquad C := P^2b_0^2-4SPc_0,  \label{eqn:def-ABC-for-CM^G-set-def}\\
     D &:= B^2 -4A C = 16Q^2+16PR = \frac{16}{\gcd(D_0,2)^2} D_0 > 0.
\end{align}

Now, for an angle $\theta=\arg_q [nS,~nPb_0+m\tfrac{R}{S},~nPc_0-m\tfrac{Q}{S}]$ in the set $\mathrm{arg}_q\, \CM^{G}_{\Delta}$, observe that 
\begin{align}
    q+r \cos\theta = \frac{-(nPb_0+m \tfrac{R}{S})}{2(nS)} = - \frac{Pb_0}{2S} - \frac{R}{2S^2} \frac{m}{n},
\end{align}
and so
\begin{align}
    \arg_q \CM^{G}_{\Delta} 
    &= \Big\{\theta\!\lrp{\frac{m}{n}} ~:~ n\ge 1,~ \gcd(m,n)=1,~   0 >A m^2 + B mn + C n^2  \ge -\Delta
    \Big\} \\
    &= \theta(W_{\Delta}^{(-A,-B,-C)}), \\
    \text{where }\ \theta(t) &:= \arccos\lrp{ \frac 1 r \lrb{- \frac{Pb_0}{2S} - \frac{R}{2S^2} t - q}}.
\end{align}

Also note that
\begin{align} 
    q = \frac{Q}{R}, &\qquad r^2-q^2 = \frac{P}{R}, \qquad r=\frac{\sqrt{Q^2+PR}}{R}, \label{eqn:q-r-formulas-via-PQR} \\
    \text{so }\ 
    \theta(t) 
    &= \arccos\lrp{ \frac{R}{\sqrt{Q^2+PR}} \lrb{ 
        \frac{-Pb_0}{2S} - \frac{R}{2S^2} t - \frac{Q}{R}
    }} \\
    &= \arccos\lrp{ \frac{1}{4\sqrt{Q^2+PR}} \lrb{ 
        -2Pb_0 \tfrac{R}{S} - 4Q - 2\frac{R^2}{S^2} t 
    }} \\
    &= \arccos\lrp{ \frac{ -B - 2A t }{\sqrt D} }.
\end{align}

Finally, we know from Theorem \ref{thm:equid-W--A<0,D>0} that $W_{\Delta}^{(-A,-B,-C)}$ is $d\mu = \frac{1}{-At^2-Bt-C} dt$ equidistributed over $(\frac{-B-\sqrt{D}}{2A},~ \frac{-B+\sqrt{D}}{2A})$. And under the reparametrization 
$\theta(t) = \arccos\lrp{ \frac{ -B - 2A t }{\sqrt D} }$, this measure 
$d\mu = \frac{1}{-At^2-Bt-C} dt$ over $t \in (\frac{-B-\sqrt{D}}{2A},~ \frac{-B+\sqrt{D}}{2A})$ becomes $d\mu = \frac{2}{\sqrt{D}} \frac{1}{\sin \theta} d\theta$ over $\theta \in (0,\pi)$. Thus we have that $\arg_q \CM^{G}_{\Delta} = \theta(W_{\Delta}^{(-A,-B,-C)})$ is equidistributed over $(0,\pi)$ with respect to the hyperbolic metric $\frac{1}{\sin \theta} d\theta$, completing the proof.
\end{proof}

We remark here that Case 1 of the above proof (for half-line geodesics) could also be shown as a corollary of Case 2 (for semicircle geodesics); see the paragraph below for explanation. We have chosen here to prove both parts directly for sake of exposition. The argument for Case 1 turns out to be much easier, so we gave that argument first. Then the argument for Case 2 generalizes the ideas from Case 1.

The reason that Case 1 follows from Case 2 is that every half-line rational geodesic can be written as an $\SL_2(\ZZ)$ transformation of a semicircle rational geodesic.
Given a half-line rational geodesic $G_x$, choose a natural number $n > x$ and let $q = r = \frac{1}{2n-2x}$.  
Then it is straightforward to verify that the $\SL_2(\ZZ)$ transformation $\gamma = \pttmatrix{n}{-1}{1}{0} : z \mapsto \frac{nz-1}{z} $ bijectively maps 
\begin{align}
    \gamma ~:~ G_{q,r} =\{q + re^{i\theta} ~:~ \theta \in (0,\pi)\} &\longrightarrow 
    G_x =\{x+iy ~:~ y \in (0,\infty)\}.
\end{align}
Under this mapping, we have that $\theta \mapsto y(\theta) =\frac{1}{2r} \frac{\sin \theta}{1+\cos \theta}$, and so the metric $ds_\text{hyp} = \frac{1}{\sin \theta} d\theta$ becomes $ds_\text{hyp} = \frac{1}{y}dy$.
As an $\SL_2(\ZZ)$ transformation, $\gamma$ preserves discriminants of CM points.
Hence applying $\gamma$ to $\CM^{G_{q,r}}_{|D|\le \Delta}$, we have by Case 2 that $\textnormal{Im\,} \CM^{G_x}_{|D|\le \Delta}$ is equidistributed with respect to $\frac{1}{y}dy$, verifying Case 1 for $G_x$.

%%%%%%%%%%%%%%%%%%%%%%%%%%%%%%%%%%

{
\renewcommand{\thetheorem}{\ref{thm:equid-RM-geodesic}}
\begin{theorem} 
    Fix a rational geodesic $G$, and let 
    $\RM^{G}_{\Delta}$ denote the set of RM curves along $G$ with discriminant $D \le \Delta$. 
    Then as $\Delta \to \infty$, $\RM^{G}_{\Delta}$ is equidistributed along $G$ with respect to the hyperbolic metric.
\end{theorem}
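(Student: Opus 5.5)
We mirror the proof of Theorem \ref{thm:equid-CM-geodesic}, with two cases according to whether $G$ is a half-line or a semicircle. By Lemma \ref{lem:CM-RM-along-G-diophantine}(2), an RM curve $\langle a,b,c\rangle$ meets $G_{(A_0,B_0,C_0)}$ perpendicularly iff $2aC_0+2cA_0=bB_0$. This is the same linear condition as for CM points. So with the same $(P,Q)$ (resp.\ $(P,Q,R)$, $S$, $b_0,c_0$) as before, the normalized triples with $a\ge 1$ satisfying it are parametrized by $(nQ,nP,-m)$ (resp.\ $(nS,\,nPb_0+m\tfrac RS,\,nPc_0-m\tfrac QS)$) with $n\ge1$ and $\gcd(m,n)=1$. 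The discriminant equals the same quadratic form $Am^2+Bmn+Cn^2$ as in \eqref{eqn:CM-G0BC-set-formula} and \eqref{eqn:CM-GABC-set-formula}. The only change is the sign condition: we now require $0<Am^2+Bmn+Cn^2\le\Delta$. Hence $\RM^G_\Delta$ is indexed by $W_\Delta^{(0,B,C)}$ (resp.\ $W_\Delta^{(A,B,C)}$) rather than $W_\Delta^{(0,-B,-C)}$ (resp.\ $W_\Delta^{(-A,-B,-C)}$). The requirement $a\neq 0$ is automatic since $n\ge1$.

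Next we locate the intersection point on $G$ as a function of $t=m/n$. In the half-line case, $G$ has real part $x_0=-C_0/B_0$, and $\langle nQ,nP,-m\rangle$ is centered at $x_0$ with squared radius $\frac{P^2n^2+4Qnm}{4Q^2n^2}$. The intersection point therefore has height
\begin{align}
y(t)=\sqrt{\tfrac{4}{B}t+\tfrac{4C}{B^2}}.
\end{align}
In the semicircle case, the intersection point $q+re^{i\theta}$ lies on the tangent line from the center $c'$ of the RM curve. In the right triangle formed by the two centers and the intersection point, the angle at $q$ satisfies $\cos\theta=r/(c'-q)$, with signs handled by orientation. Here $c'=-\frac{Pb_0}{2S}-\frac{R}{2S^2}t$. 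Using \eqref{eqn:q-r-formulas-via-PQR} exactly as before, this should simplify to
\begin{align}
\cos\theta(t)=\frac{\sqrt D}{-B-2At},
\end{align}
which is the reciprocal of the CM formula. This reciprocity is the natural expectation, since the RM centers lie outside the diameter of $G$ while the CM abscissas lie inside it.

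Finally we apply Theorem \ref{thm:equid-W--A=0,B>0} in the half-line case and Theorem \ref{thm:equid-W--A>0,D>0} in the semicircle case, noting $A=R^2/S^2>0$ and $D>0$. These give $d\mu=\frac{dt}{At^2+Bt+C}$ equidistribution on $I=(-C/B,\infty)$, resp.\ on the wrapped interval $(\frac{-B+\sqrt D}{2A},\infty]\cup[-\infty,\frac{-B-\sqrt D}{2A})$. The map $t\mapsto y(t)$ sends $(-C/B,\infty)$ bijectively to $(0,\infty)$, and a direct computation turns $d\mu$ into a constant multiple of $dy/y$. Similarly, $t\mapsto\theta(t)$ sends the wrapped interval bijectively and continuously onto $(0,\pi)$, with $t=\pm\infty$ going to $\theta=\pi/2$. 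Differentiating $\cos\theta=\sqrt D/(-B-2At)$ should show that $d\mu$ becomes $\frac{2}{\sqrt D}\frac{d\theta}{\sin\theta}$. Since compact subintervals of $(0,\pi)$ pull back to compact subintervals of the wrapped interval in $\widehat\RR$, equidistribution transfers.

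The main obstacle is the geometric step: deriving $\theta(t)$ correctly, with the right signs on both sides of $G$, and checking that the change of variables yields exactly the hyperbolic measure. The Diophantine parametrization is inherited verbatim from the proof of Theorem \ref{thm:equid-CM-geodesic}, so it needs no new work. Alternatively, the half-line case can again be deduced from the semicircle case via the same $\SL_2(\ZZ)$ transformation. This works because $\gamma$ preserves discriminants and perpendicularity.
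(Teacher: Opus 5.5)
Your proposal is correct and follows the paper's proof essentially step for step. It uses the same parametrization inherited from Theorem \ref{thm:equid-CM-geodesic}, the same height $y(t)$, the same $\theta(t)=\arccos\bigl(-\sqrt D/(2At+B)\bigr)$, and the same appeal to Theorems \ref{thm:equid-W--A=0,B>0} and \ref{thm:equid-W--A>0,D>0}. The only cosmetic difference is how you locate the intersection point: you use the right triangle ($\cos\theta=r/(c'-q)$, which holds with the correct sign on both sides of $G$), while the paper subtracts the two circle equations and applies Lemma \ref{lem:CM-RM-along-G-diophantine} to get $\mathrm{Re}(z)$. Both routes give the same formula, and the computations you left hedged (``should simplify'', ``should show'') do check out.
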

\addtocounter{theorem}{-1}
}
\begin{proof}  Let $\RM^{\cap G}_{\Delta}$ denote the set of intersection points of the RM curves in $\RM^{G}_{\Delta}$ with $G$. We want to show that as $\Delta\to\infty$, $\RM^{\cap G}_{\Delta}$ is equidistributed along $G$.

\noindent\textbf{Case 1: $G$ is a half-line rational geodesic.}

Write $G$ as $G_{(0,B_0,C_0)}$. Then by an identical argument as for \eqref{eqn:CM-G0BC-set-formula} in Theorem \ref{thm:equid-CM-geodesic} (and using the same parameters), we have that
\begin{align}
    \RM^{G}_{\Delta} 
    &= \Big\{  \langle nQ,nP,-m \rangle  ~:~ n \ge 1,~ \gcd(m,n)=1,~  0 < B mn + C n^2 \le \Delta
    \Big\},
\end{align}
and so
\begin{align}
    \mathrm{Im}\,\RM^{\cap G}_{\Delta} 
    &= \Big\{  \frac{\sqrt{4Qnm +P^2n^2}}{2Qn}  ~:~ n \ge 1,~ \gcd(m,n)=1,~  0 < B mn + C n^2 \le \Delta
    \Big\} \\
    &= \Big\{  \sqrt{\frac{4}{B} \lrp{\frac mn} + \frac{4C}{B^2}}  ~:~ n \ge 1,~ \gcd(m,n)=1,~  0 < B mn + C n^2 \le \Delta
    \Big\} \\
    &=  \Big\{  y\lrp{\frac mn}  ~:~ n \ge 1,~ \gcd(m,n)=1,~  0 < B mn + C n^2 \le \Delta
    \Big\} \\
    &= y(W_{\Delta}^{(0,B,C)}), \\
    \text{where }\ y(t) 
    &:= \sqrt{\frac{4}{B} t + \frac{4C}{B^2}}.
\end{align}

Finally, we know from Theorem \ref{thm:equid-W--A=0,B>0} that $W_{\Delta}^{(0,B,C)}$ is $d\mu = \frac{1}{Bt+C}$ equidistributed over $(\frac{-C}{B},\infty)$. And under the reparametrization 
$y(t) = \sqrt{\frac{4}{B} t + \frac{4C}{B^2}}$, this measure $d\mu = \frac{1}{Bt+C}$ over $t \in (\frac{-C}{B},\infty)$ becomes $d\mu = \frac{2}{B} \frac{1}{y} dy$ over $y \in (0,\infty)$. Thus we have that $\mathrm{Im}\,\RM^{\cap G}_{\Delta} = y(W_{\Delta}^{(0,B,C)})$ is equidistributed over $(0,\infty)$ with respect to the hyperbolic metric $\frac{1}{y} dy$, completing the proof.

\noindent\textbf{Case 2: $G$ is a semicircle rational geodesic.}

Write $G$ as $G_{(A_0,B_0,C_0)}$. Then by an identical argument as for \eqref{eqn:CM-GABC-set-formula} in Theorem \ref{thm:equid-CM-geodesic} (and using the same parameters), we have that
\begin{align}
    \RM^{G}_{\Delta} 
    &= \Big\{\langle nS,~nPb_0+m\tfrac{R}{S},~nPc_0-m\tfrac{Q}{S} \rangle ~:~ n\ge 1,~ \gcd(m,n)=1,~ \\
    &\hspace{61mm} 0 < A m^2 + B mn + C n^2  \le \Delta
    \Big\}, \\
    \text{where}\quad  A&:=\tfrac{R^2}{S^2}, \qquad B := 2Pb_0 \tfrac{R}{S}+4Q, \qquad C := P^2b_0^2-4SPc_0,  \\
     D &:= B^2 -4A C = 16Q^2+16PR = \frac{16}{\gcd(D_0,2)^2} D_0 > 0.
\end{align}

Now, let $z$ denote the perpendicular-intersection point of $\langle a,b,c\rangle$ and $G=G_{(A_0,B_0,C_0)}$. By \eqref{eqn:GABC-set-formula}, we know that $z$ satisfies
\begin{align}
    A_0|z|^2 + B_0 \, \mathrm{Re}(z) + C_0 = 0 \qquad \text{and}\qquad a|z|^2 + b \, \mathrm{Re}(z) + c = 0.
\end{align}
Thus
\begin{align}
    &\lrp{\frac{B_0}{A_0} - \frac ba} \mathrm{Re}(z) + \lrp{\frac{C_0}{A_0} - \frac ca} = 0, \\
    \text{and so }\ \ \mathrm{Re}(z) &= \frac{\frac ca -\frac{C_0}{A_0}}{\frac{B_0}{A_0} - \frac ba} = \frac{A_0c-C_0a}{B_0a-A_0b} = \frac{B_0b-4C_0a}{2B_0a-2A_0b} \qquad\quad \text{(by Lemma \ref{lem:CM-RM-along-G-diophantine})} \\
    &= \frac{-Qb+2Pa}{-2Qa-Rb} = \frac{\frac{Q}{R}\lrp{-2Qa-Rb} + 2\frac{Q^2}{R}a + 2Pa}{-2Qa-Rb} \quad\quad \text{(by \eqref{eqn:def-PQR})}\\
    &= \frac{Q}{R} -  \frac{\frac{2a}{R}(Q^2+PR)}{2Qa+Rb}.
\end{align}
Hence for an angle $\theta=\arg_q \langle nS,~nPb_0+m\tfrac{R}{S},~nPc_0-m\tfrac{Q}{S} \rangle \cap G$ in the set $\mathrm{arg}_q\, \RM^{\cap G}_{\Delta}$, we have that 
\begin{align}
    q+r \cos\theta = \frac{Q}{R} -  \frac{\frac{2nS}{R}(Q^2+PR)}{2QnS+R\lrp{nPb_0+m\tfrac{R}{S}}} = 
    \frac{Q}{R} - \frac{\frac{2S}{R}(Q^2+PR)}{\frac{R^2}{S}\lrp{\frac mn} +2QS+RPb_0  },
\end{align}
and so
\begin{align}
    \arg_q \RM^{\cap G}_{\Delta} 
    &= \Big\{\theta \lrp{\frac{m}{n}} ~:~ n\ge 1,~ \gcd(m,n)=1,~   0  < A \lrp{\frac{m}{n}}^2 + B \lrp{\frac{m}{n}} + C  \le \frac{\Delta}{n^2}
    \Big\} \\
    &= \theta(W_{\Delta}^{(A,B,C)}), \\
    \text{where }\ \theta(t) &:= \arccos\lrp{ \frac 1 r \lrb{
        \frac{Q}{R} - \frac{\frac{2S}{R}(Q^2+PR)}{\frac{R^2}{S}t +2QS+RPb_0  }
    - q}}.
\end{align}

Also note that
\begin{align} 
    q = \frac{Q}{R}, &\qquad r^2-q^2 = \frac{P}{R}, \qquad r=\frac{\sqrt{Q^2+PR}}{R},  \\
    \text{so }\ 
    \theta(t) 
    &= \arccos\lrp{ \frac{R}{\sqrt{Q^2+PR}} \lrb{
        \frac{Q}{R} - \frac{\frac{2S}{R}(Q^2+PR)}{\frac{R^2}{S}t +2QS+RPb_0  }
    - \frac QR }} \\
    &= \arccos\lrp{ \frac{1}{4\sqrt{Q^2+PR}} \lrb{
         - \frac{16(Q^2+PR)}{2\frac{R^2}{S^2}t +4Q+\frac{R}{S}Pb_0  }
    }} \\
    &= \arccos\lrp{ \frac{-\sqrt D}{ 2A t+B } }.
\end{align}

Finally, we know from Theorem \ref{thm:equid-W--A>0,D>0} that $W_{\Delta}^{(A,B,C)}$ is $d\mu = \frac{1}{At^2+Bt+C} dt$ equidistributed over  $(\frac{-B+\sqrt{D}}{2A},\infty] \cup [-\infty,\frac{-B-\sqrt{D}}{2A})$. And under the reparametrization 
$\theta(t) = \arccos\lrp{ \frac{-\sqrt D}{ 2A t+B } }$, this measure
$d\mu = \frac{1}{At^2+Bt+C} dt$ over $t \in (\frac{-B+\sqrt{D}}{2A},\infty] \cup [-\infty,\frac{-B-\sqrt{D}}{2A})$ becomes $\frac{2}{\sqrt{D}} \frac{1}{\sin \theta} d\theta$ over $\theta \in (0,\pi)$. Thus we have that $\arg_q \RM^{\cap G}_{\Delta} = \theta(W_{\Delta}^{(A,B,C)})$ is equidistributed over $(0,\pi)$ with respect to the hyperbolic metric $\frac{1}{\sin \theta} d\theta$, completing the proof.
\end{proof}

{
\renewcommand{\thetheorem}{\ref{thm:equid-RM-fixedpt}}
\begin{theorem} 
    Fix a CM point $p$, and let 
    $\RM^p_{\Delta}$ denote the set of RM curves passing through $p$ with discriminant $D \le \Delta$. 
    Then as $\Delta \to \infty$, the angles $\mathrm{ang}_p\, \RM^p_{\Delta}$ are equidistributed with respect to the uniform angle measure.
\end{theorem}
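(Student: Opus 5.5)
The plan follows the template of the semicircle case of Theorem \ref{thm:equid-CM-geodesic}; the only difference is that the roles of the fixed object and the varying object are swapped. Write $p=[A_0,B_0,C_0]$ with $D_0=B_0^2-4A_0C_0<0$ and $A_0\ge1$. Set $q_p=\mathrm{Re}(p)=\frac{-B_0}{2A_0}$ and $y_p=\mathrm{Im}(p)=\frac{\sqrt{-D_0}}{2A_0}$. By part (3) of Lemma \ref{lem:CM-RM-along-G-diophantine}, $\langle a,b,c\rangle$ passes through $p$ if and only if $2aC_0+2cA_0=bB_0$.

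\textbf{Parametrization.} As before, put
\begin{align}
(P,Q,R)=\tfrac{1}{\gcd(D_0,2)}(-2C_0,-B_0,2A_0),
\end{align}
so that the condition becomes $aP=bQ+cR$ with $\gcd(P,Q,R)=1$ and $R\ge1$. The set of normalized triples with $a\ge1$ satisfying this is exactly the set
\begin{align}
(nS,\ nPb_0+m\tfrac RS,\ nPc_0-m\tfrac QS),\qquad n\ge1,\ \gcd(m,n)=1,
\end{align}
where $S=\gcd(Q,R)$ and $b_0Q+c_0R=S$; this was established in the proof of Theorem \ref{thm:equid-CM-geodesic}. Half-line geodesics have $a=0$ and are not RM curves, so they are excluded automatically; in any case there is at most one through $p$, which does not affect the distribution. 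The discriminant of this triple is $Am^2+Bmn+Cn^2$ with $A,B,C$ as in \eqref{eqn:def-ABC-for-CM^G-set-def}. Now, however,
\begin{align}
D=B^2-4AC=16(Q^2+PR)=\tfrac{16}{\gcd(D_0,2)^2}D_0<0,
\end{align}
and $A=R^2/S^2>0$. Hence
\begin{align}
\RM^p_\Delta=\{\langle\cdots\rangle : \tfrac mn\in W^{(A,B,C)}_\Delta\},
\end{align}
and Theorem \ref{thm:equid-W--A>0,D<0} gives $d\mu=\frac{dt}{At^2+Bt+C}$-equidistribution of $W^{(A,B,C)}_\Delta$ over all of $\widehat\RR$.

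\textbf{Angle map.} Next, compute $\mathrm{ang}_p$ of the RM curve attached to $t=m/n$. Its center is
\begin{align}
c(t)=\frac{-b}{2a}=-\frac{Pb_0}{2S}-\frac{R}{2S^2}t,
\end{align}
which is affine in $t$ with negative slope. The angle satisfies $p=c(t)+r e^{i\theta}$, so
\begin{align}
\cot\theta=\frac{q_p-c(t)}{y_p}.
\end{align}
Using $q_p=Q/R$ and $y_p=\sqrt{-(Q^2+PR)}/R$ (the analogue of \eqref{eqn:q-r-formulas-via-PQR}), the same algebra as in Theorem \ref{thm:equid-CM-geodesic} should give
\begin{align}
\cot\theta(t)=\frac{2At+B}{\sqrt{-D}}.
\end{align}
As $t$ runs over $\widehat\RR$, $\theta$ then runs monotonically over $(0,\pi)$, with $t=\infty$ corresponding to the degenerate value $0$ or $\pi$, a single point.

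\textbf{Change of measure.} Finally, transform the measure. With $u=\frac{2At+B}{\sqrt{-D}}$ we have
\begin{align}
At^2+Bt+C=\frac{-D}{4A}(1+u^2),\qquad dt=\frac{\sqrt{-D}}{2A}\,du,
\end{align}
so
\begin{align}
d\mu=\frac{2}{\sqrt{-D}}\,\frac{du}{1+u^2}.
\end{align}
Since $u=\cot\theta$, this equals $\frac{2}{\sqrt{-D}}\,d\theta$, the uniform angle measure on $(0,\pi)$. Equidistribution is preserved under this monotone homeomorphism: compact subintervals correspond to compact subintervals, apart from the single endpoint point. This proves the theorem, with angles of geodesics taken in $(0,\pi)$.

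The main obstacle is the algebraic verification that $\cot\theta(t)$ is exactly $(2At+B)/\sqrt{-D}$. This requires checking the sign conventions for $\mathrm{ang}_p$, and confirming that the constants $Pb_0$ and $c_0$ cancel correctly using $b_0Q+c_0R=S$ and the definition of $C$. The remaining steps are direct transcriptions of the earlier proof.
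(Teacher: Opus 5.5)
Your proposal is correct and follows the paper's proof essentially step for step. You use the same $(P,Q,R,S)$ parametrization, reduce to the $A>0$, $D<0$ case (Theorem~\ref{thm:equid-W--A>0,D<0}), and finish with a change of variables; the only difference is that you express the angle via $\cot\theta=(2At+B)/\sqrt{-D}$ rather than the paper's equivalent $\cos\theta=\frac{2At+B}{2\sqrt{A}\sqrt{At^2+Bt+C}}$, which makes the measure transformation slightly cleaner. The step you flag as the main obstacle is immediate and needs neither $c_0$ nor $C$: from $B=2Pb_0\frac{R}{S}+4Q$ one gets $2At+B=4R\,(q_p-c(t))$, and $\sqrt{-D}=4\sqrt{-(Q^2+PR)}=4R\,y_p$.
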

\addtocounter{theorem}{-1}
}
\begin{proof}  
Write $p$ as $[A_0,B_0,C_0]$. Then by an identical argument as for \eqref{eqn:CM-GABC-set-formula} in Theorem \ref{thm:equid-CM-geodesic} (and using the same parameters), we have that
\begin{align}
    \RM^{p}_{\Delta} 
    &= \Big\{\langle{nS,~nPb_0+m\tfrac{R}{S},~nPc_0-m\tfrac{Q}{S}}\rangle  ~:~ n\ge 1,~ \gcd(m,n)=1,~ \\
    &\hspace{41mm} 0 < A m^2 + B mn + C n^2  \le \Delta
    \Big\} \\
    \text{where}\quad  A&:=\tfrac{R^2}{S^2}, \qquad B := 2Pb_0 \tfrac{R}{S}+4Q, \qquad C := P^2b_0^2-4SPc_0,  \\
     D &:= B^2-4AC = 16Q^2+16PR = \frac{16}{\gcd(D_0,2)^2} D_0 < 0.
\end{align}

Now, for an angle $\theta = \mathrm{ang}_p\, \langle a,b,c \rangle$, note that
\begin{align}
    \frac{Q}{R} &= \frac{-B_0}{2A_0} = \mathrm{Re}(p) = \frac{-b}{2a} + \frac{\sqrt{b^2-4ac}}{2a} \cos \theta \\
    &\text{and so }\ \ \cos \theta = \frac{2a}{\sqrt{b^2-4ac}}\lrp{\frac{Q}{R} + \frac{b}{2a}}.
\end{align}
Hence for an angle $\theta=\mathrm{ang}_p\, \langle{nS,~nPb_0+m\frac{R}{S},~nPc_0-m\frac{Q}{S}}\rangle$ in the set $\mathrm{ang}_p\, \RM^{p}_{\Delta}$, we have that
\begin{align}
    \cos \theta 
    &= \frac{2nS}{\sqrt{(nPb_0+m\frac{R}{S})^2-4nS(nPc_0-m\frac{Q}{S})}}\lrp{\frac{Q}{R} + \frac{nPb_0+m\frac{R}{S}}{2nS}} \\
    &= \frac{2nS}{n\sqrt{A \lrp{\frac mn}^2 + B \lrp{\frac mn} + C}}\lrp{\frac{4Q+2Pb_0 \frac RS}{4R} + \frac{R}{2S^2} \lrp{\frac mn}} \\
    &= \frac{2S}{\sqrt{A \lrp{\frac mn}^2 + B \lrp{\frac mn} + C}}\lrp{\frac{B+2A\lrp{\frac mn}}{4R}} \\
    &= \frac{1}{2\sqrt A}\frac{B+2A\lrp{\frac mn}}{\sqrt{A \lrp{\frac mn}^2 + B \lrp{\frac mn} + C}},
\end{align}
and so
\begin{align}
    \mathrm{ang}_p\, \RM^{p}_{\Delta} 
    &= \Big\{\theta\!\lrp{\frac{m}{n}} ~:~ n\ge 1,~ \gcd(m,n)=1,~   0  < A m^2 + B mn + C n^2  \le \Delta
    \Big\} \\
    &= \theta(W_{\Delta}^{(A,B,C)}), \\
    \text{where }\ \theta(t) &:= \arccos\lrp{\frac{1}{2\sqrt A}\frac{B+2At}{\sqrt{A t^2 + Bt + C}}}.
\end{align}

Finally, we know from Theorem \ref{thm:equid-W--A>0,D<0} that $W_{\Delta}^{(A,B,C)}$ is $d\mu = \frac{1}{At^2+Bt+C} dt$ equidistributed over  $(-\infty,\infty)$. And under the reparametrization $\theta(t) = \arccos\lrp{\frac{1}{2\sqrt A}\frac{B+2At}{\sqrt{A t^2 + Bt + C}}}$, this measure
$d\mu = \frac{1}{At^2+Bt+C} dt$ over $t \in [-\infty,\infty]$ becomes $d\mu = \frac{2}{\sqrt{-D}}  d\theta$ over $\theta \in [0,\pi]$. Thus we have that $\mathrm{ang}_p\, \RM^{p}_{\Delta} = \theta(W_{\Delta}^{(A,B,C)})$ is equidistributed over $[0,\pi]$ with respect to the uniform angle measure $d\theta$.
\end{proof}

%%%%%%%%%%%%%%%%%%%%%%%%%%%%%%%%%%%%%%%%%%
%%%%%%%%%%%%%%%%%%%%%%%%%%%%%%%%%%%%%%%%%%

\section{Proof of Theorem \ref{thm:equid-CM-fixedpt}} \label{sec:proof-of-thm-equid-CM-fixedpt}

To prove Theorem \ref{thm:equid-CM-fixedpt}, we first need to carry out some hyperbolic geometry calculations.
\begin{lemma} 
    \label{lem:hyp-dist-and-angle}
    Let $x_0 + iy_0$ and $x+iy$ be two points in $\HH$. Then the hyperbolic distance from $x_0+iy_0$ to $x+iy$ is given by
    \begin{align} \label{eqn:geom-lemma--dist}
        \mathrm{dist}_\mathrm{hyp}(x+iy, x_0+iy_0) = \mathrm{arccosh}\lrp{\frac{(x-x_0)^2 + y^2 + y_0^2}{2yy_0}}.
    \end{align}
    Moreover, if $x > x_0$, then the angle from $x_0 + iy_0$ to $x+iy$ is given by
    \begin{align} 
        &\mathrm{ang}_{x_0+iy_0}(x+iy) = \arccos \lrp{\frac{q-x_0}{r}}, 
        \label{eqn:geom-lemma--angle}
        \\
        \text{where}&\quad q := \frac{x+x_0}{2} + \frac{y^2-y_0^2}{2(x-x_0)} \quad \text{and} \quad r := \sqrt{(x_0-q)^2+y_0^2}.
        \label{eqn:geom-lemma--q-r}
    \end{align}
\end{lemma}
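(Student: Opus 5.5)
The distance formula \eqref{eqn:geom-lemma--dist} is the standard one, $\cosh \mathrm{dist}_\mathrm{hyp}(z,w) = 1 + \frac{|z-w|^2}{2\,\mathrm{Im}(z)\mathrm{Im}(w)}$. I would either cite it or verify it quickly. By invariance under the isometries $z\mapsto z+t$ and $z\mapsto \lambda z$, it suffices to check it on the vertical geodesic through $iy_0$ and $iy$. There the distance is $|\log(y/y_0)|$, and $\cosh$ of this equals $\frac{y^2+y_0^2}{2yy_0}$. One then observes that the right-hand side of \eqref{eqn:geom-lemma--dist} is exactly $1+\frac{|z-w|^2}{2yy_0}$, and that this expression is invariant under the isometries of $\HH$.

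For the angle formula, I would first identify the geodesic $G$ through $p=x_0+iy_0$ and $z=x+iy$. Since $x\ne x_0$, $G$ is a semicircle $G_{q,r}$ whose center $q\in\RR$ is equidistant from $p$ and $z$. So I would solve $(x_0-q)^2+y_0^2=(x-q)^2+y^2$ for $q$. This expands to $2q(x-x_0) = x^2-x_0^2+y^2-y_0^2$, giving $q=\frac{x+x_0}{2}+\frac{y^2-y_0^2}{2(x-x_0)}$, and then $r$ is the common distance $\sqrt{(x_0-q)^2+y_0^2}$. This matches \eqref{eqn:geom-lemma--q-r}.

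Since $x>x_0$, the definition of $\mathrm{ang}_p(z)$ puts us in its first case, so $\mathrm{ang}_p(z)=\mathrm{ang}_p(G)=\arg_q(p)$. Writing $p=q+re^{i\theta}$ with $\theta\in(0,\pi)$, we get $x_0=q+r\cos\theta$. Because $\arccos$ is a bijection from $[-1,1]$ to $[0,\pi]$, this gives $\theta=\arccos\lrp{\frac{x_0-q}{r}}$.

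The main obstacle is a sign and orientation check. My computation naturally produces $\frac{x_0-q}{r}$, while the statement has $\frac{q-x_0}{r}$. I need to reconcile the two with the convention that $\theta=0$ points straight down and with the direction in which $z$ lies from $p$. So I would carefully recheck how $\mathrm{ang}_p(G)=\arg_q(p)$ relates to the tangent direction at $p$ pointing toward $z$. The tangent to $G_{q,r}$ at $q+re^{i\theta}$ is perpendicular to the radius. The direction toward $z$, which has larger real part, is $-ie^{i\theta}$ rotated appropriately. Measuring its angle from the downward direction $-i$ should give the supplementary angle $\pi-\arg_q(p)$, whose cosine is $\frac{q-x_0}{r}$.

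I would test this on a concrete example, such as $p=i$ and $z$ on the unit circle with $x>0$, to confirm which of $\theta$ or $\pi-\theta$ the figure convention intends. I would then present the formula as $\arccos\frac{q-x_0}{r}$, noting that it agrees with the convention in Figure \ref{fig:geodesic-angle}.
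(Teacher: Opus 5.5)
Your computation of $q$ and $r$, and the chain $\mathrm{ang}_p(z)=\mathrm{ang}_p(G)=\arg_q(p)$ giving $x_0=q+r\cos\theta$, follow the paper's argument exactly. The gap is your last step. The claim that the direction toward $z$ makes angle $\pi-\arg_q(p)$ with the downward direction is false.

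At $p=q+re^{i\theta}$, the tangent to $G_{q,r}$ pointing toward increasing real part is $-ie^{i\theta}$; its real part is $\sin\theta>0$. This vector is the downward vector $-i$ rotated counterclockwise by exactly $\theta$. That is why the paper can set $\mathrm{ang}_p(G_{q,r})=\arg_q(p)$, the identity you had just used. So your argument actually proves $\mathrm{ang}_{x_0+iy_0}(x+iy)=\arccos\frac{x_0-q}{r}$, and the printed $\arccos\frac{q-x_0}{r}$ is $\pi$ minus the true angle.

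Your proposed test cannot detect this. For $p=i$ and $z$ on the unit circle, $q=x_0=0$, so both expressions equal $\pi/2$. Take instead $p=i$ and $z=1+\sqrt2\,i$. Then $q=1$, $r=\sqrt2$, and $p=1+\sqrt2\,e^{3\pi i/4}$. The direction from $p$ toward $z$ is $(1+i)/\sqrt2$, up and to the right, at angle $3\pi/4=\arccos(-1/\sqrt2)$ from straight down. The printed formula gives $\pi/4$, which is a down-right direction.

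The paper's own proof has the same slip: it ends with $x_0=q+r\cos\theta$ and asserts the printed formula, which does not follow. Elsewhere, e.g.\ in the proof of Theorem~\ref{thm:equid-RM-fixedpt}, the paper uses $\mathrm{Re}(p)=q+r\cos\theta$, which is consistent with the corrected sign. So do not supply a geometric justification for the wrong sign. Instead, state and prove $\arccos\frac{x_0-q}{r}$, and note that the discrepancy is harmless for Lemma~\ref{lem:area-of-R}. Replacing $\theta$ by $\pi-\theta$ only flips the sign of the Jacobian $J$, the paper integrates $|J|$, and the resulting area depends only on $\theta_2-\theta_1$.

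A smaller point concerns the distance formula. Translations and dilations alone cannot move two arbitrary points onto a common vertical line. You therefore need invariance of $1+\frac{|z-w|^2}{2\,\mathrm{Im}\,z\,\mathrm{Im}\,w}$ under all of $\mathrm{SL}_2(\RR)$. This follows from $\mathrm{Im}(gz)=\mathrm{Im}\,z/|cz+d|^2$ and $|gz-gw|=|z-w|/(|cz+d|\,|cw+d|)$. You invoke this invariance but should verify or cite it.
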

\begin{proof}
    The distance formula \eqref{eqn:geom-lemma--dist} is well known. Here, we prove \eqref{eqn:geom-lemma--angle}.

    Let $G=G_{q,r}$ denote the geodesic connecting $x_0+iy_0$ and $x_iy$. Then 
    \begin{align}
        (x_0-q)^2 + y_0^2 = r^2 \qquad \text{and} \qquad (x-q)^2+y^2 = r^2
    \end{align}
    Solving this system for $q$ and $r$ yields the claimed formulas \eqref{eqn:geom-lemma--q-r}. 
    
    Now, since $x > x_0$, recall that $\mathrm{ang}_{x_0+iy_0}(x+iy) = \mathrm{ang}_{x_0+iy_0}(G)$, and that $\mathrm{ang}_{x_0+iy_0}(G)$ can also be interpreted as the argument of $x_0+iy_0$ as a point on $G$ (see Figure \ref{fig:geodesic-angle}). Hence
    \begin{align}
        \theta &= \mathrm{ang}_{x_0+iy_0}(x+iy) = \mathrm{ang}_{x_0+iy_0}(G) = \mathrm{arg}_G(x_0+iy_0), \\
        &\text{so }\text{that}\quad x_0 = \text{Re}(q+r \,e^{i\theta}) = q + r \cos(\theta).
    \end{align}
    This then implies \eqref{eqn:geom-lemma--angle}, completing the proof.
\end{proof}

Now, Theorem \ref{thm:equid-CM-fixedpt} concerns the behavior of CM points in the (hyperbolic) closed ball $\overline B(z_0,s_0)$. Writing $z_0 = x_0 +i y_0$ and using the distance formula \eqref{eqn:geom-lemma--dist}, it is straightforward to compute that $\overline B(z_0,s_0)$ is the Euclidean disk in $\HH$ with center $x_0 + i y_0 \cosh(s_0)$ and Euclidean radius $y_0 \sinh(s_0)$. In the following lemma, we determine the area of the region in $\overline B(z_0,s_0)$ with angles $\theta_1 \le \mathrm{ang}_{z_0}(z) \le \theta_2$.

\par \raggedbottom
\begin{figure}[H]
    \centering
    \includegraphics[width=0.6\textwidth]{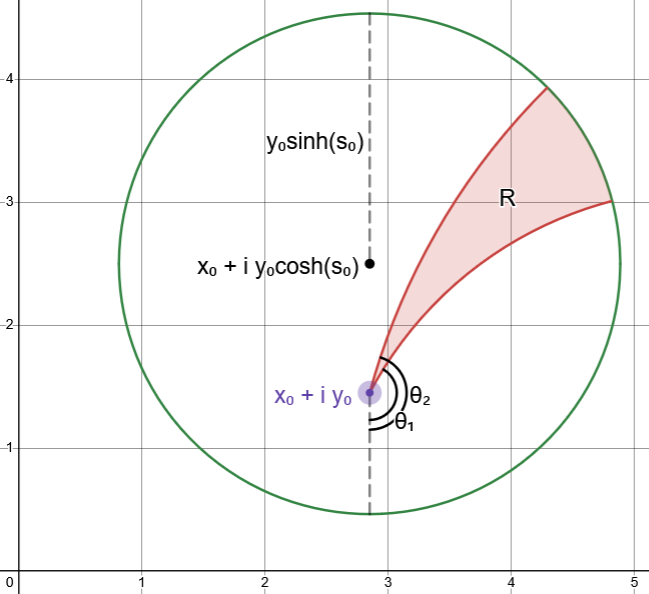}
    \caption{The region $R = \lrcb{z \in \overline B(x_0+iy_0,s_0) ~:~ \theta_1 \le \mathrm{ang}_{x_0+iy_0}(z) \le \theta_2}$.}
    \label{fig:hyperbolic-disk}
\end{figure}
\par \flushbottom 

\begin{lemma} \label{lem:area-of-R}
    Fix $z_0 \in \HH$, $s_0 \in \RR^+$, and $0 \le \theta_1 < \theta_2 \le 2\pi$. Let $\overline B(z_0,s_0)$ denote the set of all points in $\HH$ within hyperbolic distance $s_0$ of $z_0$. Then the hyperbolic area of the region
    \begin{align}
        R := \lrcb{z \in \overline B(z_0,s_0) ~:~ \theta_1 \le \mathrm{ang}_{z_0}(z) \le \theta_2
        }
    \end{align}
    is $(\theta_2-\theta_1)(\cosh(s_0)-1)$.
\end{lemma}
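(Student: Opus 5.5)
Since the statement is invariant under isometries of $\HH$, I will use hyperbolic geodesic polar coordinates centered at $z_0$. The plan is to show that these coordinates are exactly the parametrization by $(s,\theta)=(\mathrm{dist}_\mathrm{hyp}(z_0,z),\mathrm{ang}_{z_0}(z))$. In them the hyperbolic area element is $\sinh(s)\,ds\,d\theta$. Then
\begin{align}
    \mathrm{Area}(R) = \int_{\theta_1}^{\theta_2}\int_0^{s_0} \sinh(s)\,ds\,d\theta = (\theta_2-\theta_1)(\cosh(s_0)-1).
\end{align}

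\textbf{Normalization.} The measure $\frac{dx\,dy}{y^2}$ has no $\frac{3}{\pi}$ factor here, since the claimed full-disk area $2\pi(\cosh s_0-1)$ is the standard one. First apply the map $z\mapsto (z-x_0)/y_0$. This is a real affine map, hence an isometry of $\HH$. It sends $z_0$ to $i$, preserves hyperbolic area, and preserves $\mathrm{ang}$: real translations and positive dilations map semicircles centered on $\RR$ to such semicircles and vertical lines to vertical lines, and they preserve arguments about the centers. So we may assume $z_0=i$.

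\textbf{Polar coordinates.} Next pass to the disk model by the Cayley map $w=\frac{z-i}{z+i}$, which is conformal and sends $i$ to $0$. In the disk, the geodesics through $0$ are the diameters, a point at hyperbolic distance $s$ from $0$ has $|w|=\tanh(s/2)$, and the area element is $\frac{4\,du\,dv}{(1-|w|^2)^2}$. Writing $w=\tanh(s/2)e^{i\phi}$ gives area element $\sinh(s)\,ds\,d\phi$, by the direct computation
\begin{align}
    \frac{4\tanh(s/2)\cdot\tfrac12\mathrm{sech}^2(s/2)}{(1-\tanh^2(s/2))^2} = 2\sinh(s/2)\cosh(s/2) = \sinh s .
\end{align}

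\textbf{Main obstacle: matching the angles.} We must check that $\phi$ and $\mathrm{ang}_i(z)$ differ by a constant (mod $2\pi$) up to orientation. Because the Cayley map is conformal, the angle of the tangent direction at $i$ of the geodesic from $i$ to $z$ corresponds, up to a fixed rotation and a possible reflection, to the direction $\phi$ of the diameter. The work is to check that the paper's convention computes exactly this direction:
\begin{enumerate}
    \item The ray from $i$ pointing straight down has $\mathrm{ang}_i=0$.
    \item The value $\arg_q(i)=\theta$ on $G_{q,r}$ equals the angle of the tangent direction at $i$ of the geodesic, with the $+\pi$ shift for points on the left.
\end{enumerate}
For the second point, the tangent to the circle at $q+re^{i\theta}$ is perpendicular to the radius $e^{i\theta}$, so its direction makes angle $\theta$ with the downward vertical. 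The rule ``add $\pi$ when $\mathrm{Re}(z)<\mathrm{Re}(p)$'' selects the correct half of the tangent line. Hence $\mathrm{ang}_i(z)$ is the tangent direction angle measured from downward, and it ranges bijectively over $[0,2\pi)$.

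Given this, $R$ corresponds to the sector $\{s\le s_0,\ \phi\in J\}$, where $J$ is an interval of length $\theta_2-\theta_1$. Boundary rays have measure zero, so they do not affect the area. Integrating then yields the formula.
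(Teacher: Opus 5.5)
Your proof is correct. At the level of strategy it is the paper's: reparametrize by $(s,\theta)=(\mathrm{dist}_\mathrm{hyp}(z,z_0),\mathrm{ang}_{z_0}(z))$, use that the area element becomes $\sinh(s)\,ds\,d\theta$, and integrate.

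The difference is in how you justify the change of variables.

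\textbf{The paper's route.} It stays in upper half-plane coordinates and reduces, with little comment, to $0<\theta_1<\theta_2<\pi$. This lets it use the explicit formula $\theta=\arccos((q-x_0)/r)$ from Lemma \ref{lem:hyp-dist-and-angle}. It then asserts the Jacobian $s_x\theta_y-s_y\theta_x=-1/(y^2\sinh s)$ from a direct computation that it calls tedious and omits. The identification of $\mathrm{ang}$ with a coordinate is built into that explicit formula.

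\textbf{Your route.} You move $z_0$ to $i$ by an affine isometry that preserves $\mathrm{ang}$, and pass to the disk, where the polar area element is a one-line computation. You then transfer the angle by conformality of the Cayley map. The price is that you must check by hand that the paper's definition of $\mathrm{ang}$ is the tangent-direction angle measured from the downward vertical, which you do correctly. In return, you treat all of $[0,2\pi)$ at once: the left half, the vertical directions, and wrap-around. You also avoid both the Jacobian calculation and the symmetry reduction.

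One small sharpening: the Cayley map is holomorphic with derivative $-i/2$ at $z=i$, so there is no reflection. In fact $\phi\equiv\mathrm{ang}_i(z)-\pi \pmod{2\pi}$. Even a reflection would not change the area, so your hedge is harmless.
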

\begin{proof}
    We can assume without loss of generality that $0 < \theta_1 < \theta_2 < \pi$, so that $R$ lies entirely in the right half of $\overline B_{z_0}(s_0)$.
    Then we reparametrize the points $x+iy$ in $R$ by $\theta = \mathrm{ang}_{z_0}(x+iy)$ and $s = \mathrm{dist}_\mathrm{hyp}(x+iy,z_0)$. Writing $z_0=x_0+iy_0$, we have by Lemma \ref{lem:hyp-dist-and-angle} that
     \begin{align}
         s(x,y) &= \mathrm{arccosh}\lrp{\frac{(x-x_0)^2 + y^2 + y_0^2}{2yy_0}} \quad
         \text{and}\quad\theta(x,y) = \arccos \lrp{\frac{q(x,y)-x_0}{r(x,y)}}, 
        \\
        \text{where}&\quad q(x,y) := \frac{x+x_0}{2} + \frac{y^2-y_0^2}{2(x-x_0)} \quad \text{and} \quad r(x,y) := \sqrt{(x_0-q(x,y))^2+y_0^2}.
     \end{align}

     It is straightforward (though somewhat tedious) to calculate that the Jacobian of this reparametrization is $J = s_x \theta_y - s_y \theta_x  = \frac{-1}{y^2 \sinh( s(x,y))}$. This then yields
     \begin{align}
         \mathrm{area}_\mathrm{hyp}(R) 
         &= \iint_R \frac{dx\,dy}{y^2} = \iint_R \sinh(s(x,y)) \,|J|\, dx \,dy = \int_{\theta_1}^{\theta_2} \int_{0}^{s_0} \sinh(s)\, ds \,d\theta \\
         &= \int_{\theta_1}^{\theta_2} \cosh(s_0)-1 \,d\theta = (\theta_2-\theta_1) (\cosh(s_0)-1),
     \end{align}
     proving the desired result.    
\end{proof}

Using this lemma, we are now able to prove Theorem \ref{thm:equid-CM-fixedpt}.
{
\renewcommand{\thetheorem}{\ref{thm:equid-CM-fixedpt}}
\begin{theorem} 
    Fix $z_0 \in \HH$ and $s_0 \in \RR^+$, and for negative fundamental discriminants $D$, let 
    $\CM^{\overline B(z_0, s_0)}_{D}$ denote the set of CM points of discriminant $D$
    within hyperbolic distance $s_0$ of $z_0$.
    Then as $D \to -\infty$, the angles $\mathrm{ang}_{z_0}\, \CM^{\overline B(z_0,s_0)}_{D}$ are equidistributed with respect to the uniform angle measure.
\end{theorem}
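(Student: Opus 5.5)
The plan is to deduce the statement from Duke's theorem \cite[Theorem 1 (i)]{duke} together with the area computation in Lemma \ref{lem:area-of-R}. For $0\le \theta_1<\theta_2\le 2\pi$, set
\begin{align}
R_{\theta_1,\theta_2} := \lrcb{z \in \overline B(z_0,s_0) ~:~ \theta_1 \le \mathrm{ang}_{z_0}(z) \le \theta_2}.
\end{align}
Then the number of angles in $\mathrm{ang}_{z_0}\, \CM^{\overline B(z_0,s_0)}_{D}$ lying in $[\theta_1,\theta_2]$ is exactly $\#\lrp{R_{\theta_1,\theta_2}\cap \CM_D}$, where $\CM_D$ is the set of all CM points of discriminant $D$ in $\HH$. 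By the definition of equidistribution, it therefore suffices to show that for compact subintervals $[\theta_1,\theta_2]\subseteq[\theta_3,\theta_4]$,
\begin{align}
\frac{\#\lrp{R_{\theta_1,\theta_2}\cap \CM_D}}{\#\lrp{R_{\theta_3,\theta_4}\cap \CM_D}} \longrightarrow \frac{\mathrm{area}_\mathrm{hyp}(R_{\theta_1,\theta_2})}{\mathrm{area}_\mathrm{hyp}(R_{\theta_3,\theta_4})} = \frac{\theta_2-\theta_1}{\theta_4-\theta_3},
\end{align}
where the last equality is Lemma \ref{lem:area-of-R}.

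The key step is to transfer Duke's equidistribution on $\FF$ to the compact region $R=R_{\theta_1,\theta_2}$ in $\HH$. Since $R$ is compact, it meets only finitely many $\SL_2(\ZZ)$-translates $\gamma\FF$. Hence
\begin{align}
\#(R\cap \CM_D)=\sum_\gamma \#\lrp{\gamma^{-1}R\cap\FF\cap\CM_D},
\end{align}
using that $\SL_2(\ZZ)$ preserves $\CM_D$. Only boundary identifications of $\FF$ need care, and they concern a measure-zero set. Each piece $\gamma^{-1}R\cap\FF$ is a set whose boundary has hyperbolic measure zero: its boundary consists of pieces of geodesics, a circle, and $\partial\FF$. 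Duke's theorem gives $\#(\Omega\cap\CM_D\cap\FF)\sim \frac{\mu_\mathrm{hyp}(\Omega)}{\mu_\mathrm{hyp}(\FF)}\,h(D)$ for such $\Omega$, where $h(D)$ is the number of CM points of discriminant $D$ in $\FF$ (weighted appropriately at the elliptic points, which is negligible). Summing over $\gamma$ and using the $\SL_2(\ZZ)$-invariance of $d\mu_\mathrm{hyp}$ gives $\#(R\cap\CM_D)\sim c\,h(D)\,\mathrm{area}_\mathrm{hyp}(R)$, with a constant $c$ independent of $R$. Taking ratios and applying Lemma \ref{lem:area-of-R} then yields the claim.

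The main obstacle is the justification that Duke's theorem applies to each piece $\gamma^{-1}R\cap\FF$, i.e.\ that these are $\mu_\mathrm{hyp}$-continuity sets. Duke's result is usually stated for convex or nice regions (or, equivalently, for continuous test functions), so I would pass from continuous functions to these sets by the standard approximation argument: sandwich the indicator between continuous functions whose integrals differ by at most $\epsilon$, which is possible because the boundary has measure zero. One must also handle the boundary angle rays when $\theta_1$ or $\theta_2$ equals $0$, $\pi$ or $2\pi$, but these are still geodesic segments and cause no trouble. Finally, the ratio definition requires the denominator to be nonzero eventually, which follows since $h(D)\to\infty$ and $\mathrm{area}_\mathrm{hyp}(R_{\theta_3,\theta_4})>0$.
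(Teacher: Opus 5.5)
Your proposal is correct and follows essentially the same route as the paper: apply Duke's equidistribution of $\CM_D$ to the sectors $R_{\theta_1,\theta_2}$ and evaluate the resulting area ratio with Lemma \ref{lem:area-of-R}. The paper simply reads Duke's theorem directly as an equidistribution statement over $\HH$. You instead also spell out the unfolding to finitely many translates of $\FF$ and the continuity-set approximation, which are details the paper leaves implicit.
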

\addtocounter{theorem}{-1}
}
\begin{proof}
    Recall that in \cite[Theorem 1 (i)]{duke}, Duke showed that as fundamental discriminants $D \to -\infty$, CM points of discriminant $D$ are equidistributed over $\HH$ with respect to the hyperbolic measure. This means that for any fixed $[X,Y] \subseteq [0,2\pi]$,
    \begin{align}
        \lim_{\substack{D \to -\infty \\ \text{fund. disc.}}}
        \frac{
            \#\, \mathrm{ang}_{z_0}\CM^{\overline B(z_0, s_0)}_{D} \cap [X,Y]
        }{
            \#\, \mathrm{ang}_{z_0}\CM^{\overline B(z_0, s_0)}_{D}
        } 
        &= 
        \frac{
            \mathrm{area}_\mathrm{hyp}\lrp{\lrcb{z \in \overline B(z_0,s_0) ~:~ X \le \mathrm{ang}_{z_0}(z) \le Y}}
        }{
            \mathrm{area}_\mathrm{hyp}\lrp{\overline B(z_0,s_0)}
        } \\
        &= 
        \frac{
            (Y-X)(\cosh(s_0)-1)
        }{
            (2\pi-0)(\cosh(s_0)-1)
        } \qquad\qquad \text{(by Lemma \ref{lem:area-of-R})} \\
        &= 
        \frac{
            \int_X^Y d\theta
        }{
            \int_0^{2\pi} d\theta
        },
    \end{align}
    verifying the desired result.
\end{proof}

%%%%%%%%%%%%%%%%%%%%%%%%%%%%%%%%%%%
%%%%%%%%%%%%%%%%%%%%%%%%%%%%%%%%%%%

\section{Analytic Number Theory Lemmas} 
\label{sec:equid-WABC--ANT-lemmas}

In Sections \ref{sec:equid-WABC--ANT-lemmas} - \ref{sec:equid-WABC--A<0}, we carry out all of the technical calculations to prove the equidistribution result of Theorem \ref{thm:equid-WABC}. 
Since the proof of this theorem is purely just an equidistribution calculation, these four sections are more or less independent from the rest of the paper. In particular, in these four sections, we will not make any reference to geodesics, CM points, RM curves, or any other objects on $\HH$.

In this section, we give six lemmas; all of which follow from standard analytic number theory arguments. 
In the following, $n,m,d$ will always denote natural numbers. Addtionally, $\phi$ denotes the Euler totient function, $\mu$ denotes the Mobius function, and $\gamma$ denotes the Euler-Mascheroni constant.
\begin{lemma}[{\cite[Theorem 8.29]{niven-zuckerman-montgomery}}] \label{lem:count-coprime-range}
    Fix $\varepsilon > 0$. Then as $T$ tends to infinity, 
    \begin{align}
        \#\{m \le T ~:~ \gcd(m,n)=1\} = \frac{\phi(n)}{n} T + O(n^{\varepsilon}) \quad \text{uniformly over all $n \ge 1$}.
    \end{align}
\end{lemma}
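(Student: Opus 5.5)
We prove Lemma \ref{lem:count-coprime-range} by Möbius inversion. Writing the coprimality condition as $\sum_{d \mid \gcd(m,n)} \mu(d)$, we obtain
\begin{align}
    \#\{m \le T ~:~ \gcd(m,n)=1\} = \sum_{m \le T} \sum_{d \mid \gcd(m,n)} \mu(d) = \sum_{d \mid n} \mu(d) \lrfloor{\frac{T}{d}}.
\end{align}
We then replace each floor by $\frac{T}{d} + O(1)$, where the implied constant is absolute (at most $1$ in absolute value). The main term becomes $T \sum_{d\mid n} \frac{\mu(d)}{d} = \frac{\phi(n)}{n} T$, using the standard identity $\sum_{d \mid n} \mu(d)/d = \phi(n)/n$. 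Since $|\mu(d)| \le 1$, the error term is bounded by $\sum_{d \mid n} |\mu(d)| \le \tau(n)$, the number of divisors of $n$.

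It then remains to invoke the classical divisor bound $\tau(n) = O_\varepsilon(n^{\varepsilon})$. We can quote this, or sketch it as follows. Write $n = \prod p^{k}$, so that $\tau(n)/n^\varepsilon = \prod (k+1)/p^{k\varepsilon}$. For $p \ge 2^{1/\varepsilon}$ each factor is at most $1$. For the finitely many remaining primes $p < 2^{1/\varepsilon}$, each factor is bounded by a constant depending only on $\varepsilon$, since $(k+1)/2^{k\varepsilon}$ is bounded in $k$. This gives the error term $O(n^\varepsilon)$. The implied constant depends only on $\varepsilon$, not on $n$ or $T$, which is exactly the uniformity over all $n \ge 1$ claimed in the statement.

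There is no real obstacle here. The only point needing care is that the $O(1)$ errors from the floors are summed over $d \mid n$, not over $d \le T$. This keeps the error independent of $T$ and makes the estimate uniform in $n$. Note that the estimate holds for every $T \ge 0$, not just asymptotically, since for $T < 1$ both sides are trivially $O(n^\varepsilon)$.
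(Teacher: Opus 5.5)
Your proof is correct. The M\"obius identity $\sum_{d\mid n}\mu(d)\lfloor T/d\rfloor$, the error bound $2^{\omega(n)}\le\tau(n)$, and the divisor bound $\tau(n)\ll_\varepsilon n^\varepsilon$ are the standard argument behind the textbook result the paper cites. The paper gives no proof of its own and only refers to Niven--Zuckerman--Montgomery.
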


\begin{lemma}[{\cite[Section 8.3, Problem 19]{niven-zuckerman-montgomery}}] \label{lem:sum-phi}
    As $T$ tends to infinity, 
    \begin{align}
        \sum_{n \le T} \phi(n) &= \frac{3}{\pi^2} T^2 + O(T \log T).
    \end{align}
\end{lemma}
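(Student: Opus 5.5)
The statement is Lemma \ref{lem:sum-phi}: $\sum_{n\le T}\phi(n) = \frac{3}{\pi^2}T^2 + O(T\log T)$. The plan is to pass to the Möbius inversion formula for $\phi$ and then swap the order of summation.

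First, I would write the totient as a divisor sum:
\begin{align}
    \phi(n) = \sum_{d \mid n} \mu(d)\,\frac{n}{d}.
\end{align}
Summing over $n \le T$ and putting $n = dm$ gives
\begin{align}
    \sum_{n\le T}\phi(n) = \sum_{d\le T}\mu(d)\sum_{m\le T/d} m .
\end{align}

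Next, I would evaluate the inner arithmetic sum exactly as $\frac12\lfloor T/d\rfloor(\lfloor T/d\rfloor+1)$. Writing $\lfloor x\rfloor = x + O(1)$ and using $x \ge 1$ when $d \le T$, this becomes
\begin{align}
    \sum_{m \le T/d} m = \frac{T^2}{2d^2} + O\!\left(\frac{T}{d}\right).
\end{align}
Substituting back, the error terms contribute $O\bigl(T\sum_{d\le T} 1/d\bigr) = O(T\log T)$.

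For the main term $\frac{T^2}{2}\sum_{d\le T}\mu(d)/d^2$, I would complete the series to all $d$. This uses
\begin{align}
    \sum_{d\ge1}\frac{\mu(d)}{d^2} = \frac{1}{\zeta(2)} = \frac{6}{\pi^2}.
\end{align}
The tail is bounded by $\sum_{d>T} d^{-2} = O(1/T)$, so completing the series costs only $O(T)$. The main term is therefore $\frac{T^2}{2}\cdot\frac{6}{\pi^2} = \frac{3}{\pi^2}T^2$, giving the claim.

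There is no real obstacle. The only care needed is uniform control of the floor-function error, which is what produces the $\log T$ via the harmonic sum. The identity $\sum_d \mu(d)d^{-2} = 1/\zeta(2)$ follows from the Euler product.
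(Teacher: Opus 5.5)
Your proof is correct. The paper gives no proof of its own and simply cites Niven--Zuckerman--Montgomery. Your M\"obius-inversion argument is the standard one: you swap to $\sum_{d\le T}\mu(d)\sum_{m\le T/d}m$, the floor errors of size $O(T/d)$ sum to $O(T\log T)$, and completing $\sum_d\mu(d)/d^2=6/\pi^2$ costs only $O(T)$. It is also exactly the technique the paper uses for the companion Lemmas \ref{lem:sum-phi-over-a} and \ref{lem:sum-phi-over-a2}.
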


\begin{lemma} \label{lem:sum-phi-over-a}
    As $T$ tends to infinity, 
    \begin{align}
        \sum_{n \le T} \frac{\phi(n)}{n} &= \frac{6}{\pi^2} T + O\lrp{\log T}.
    \end{align}
\end{lemma}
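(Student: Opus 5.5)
The plan is to use the standard Möbius-inversion approach. Write $\frac{\phi(n)}{n} = \sum_{d \mid n} \frac{\mu(d)}{d}$ and swap the order of summation:
\begin{align}
    \sum_{n \le T} \frac{\phi(n)}{n} = \sum_{n \le T} \sum_{d \mid n} \frac{\mu(d)}{d} = \sum_{d \le T} \frac{\mu(d)}{d} \lrfloor{\frac{T}{d}}.
\end{align}
Next, replace $\lrfloor{T/d}$ by $T/d + O(1)$. This splits the sum into a main term $T \sum_{d \le T} \frac{\mu(d)}{d^2}$ and an error term bounded by $\sum_{d \le T} \frac{1}{d} = \log T + O(1)$. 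That error is already of the claimed size $O(\log T)$.

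For the main term, complete the series. We have $\sum_{d=1}^\infty \frac{\mu(d)}{d^2} = \frac{1}{\zeta(2)} = \frac{6}{\pi^2}$, and the tail satisfies
\begin{align}
    \sum_{d > T} \frac{|\mu(d)|}{d^2} \le \sum_{d > T} \frac{1}{d^2} = O\lrp{\frac{1}{T}}.
\end{align}
After multiplying by $T$, the tail contributes only $O(1)$. Combining the pieces gives
\begin{align}
    \sum_{n \le T} \frac{\phi(n)}{n} = \frac{6}{\pi^2} T + O(1) + O(\log T) = \frac{6}{\pi^2} T + O(\log T),
\end{align}
as desired.

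Every step here is routine and I do not expect a real obstacle. The only point needing care is the $O(1)$ error from the floor function: it must be summed against $\frac{|\mu(d)|}{d}$, which is what produces the $\log T$ rather than anything larger. As a sanity check, one could instead apply partial summation to Lemma \ref{lem:sum-phi}. With $\Phi(t) = \sum_{n \le t} \phi(n)$,
\begin{align}
    \sum_{n\le T}\frac{\phi(n)}{n} = \frac{\Phi(T)}{T} + \int_1^T \frac{\Phi(t)}{t^2}\,dt.
\end{align}
The main terms give $\frac{3}{\pi^2}T + \frac{3}{\pi^2}T = \frac{6}{\pi^2}T$. However, the error $\int_1^T \frac{\log t}{t}\,dt$ is of size $\log^2 T$, which is weaker than claimed. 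So the direct Möbius argument above is the one I would use.
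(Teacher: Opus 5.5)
Your proposal is correct and follows the paper's proof essentially step for step. Both arguments expand $\frac{\phi(n)}{n}=\sum_{d\mid n}\frac{\mu(d)}{d}$, swap the order of summation to get $\sum_{d\le T}\frac{\mu(d)}{d}\lfloor T/d\rfloor$, replace $\lfloor T/d\rfloor$ by $T/d+O(1)$ to obtain the $O(\log T)$ error, and complete $\sum_{d\le T}\mu(d)/d^2$ to $6/\pi^2$ with an $O(1/T)$ tail.
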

\begin{proof}
    We have 
    \begin{align}
        \sum_{n \le T} \frac{\phi(n)}{n} &= \sum_{n \le T} \sum_{d|n}  \frac{\mu(d)}{d} = \sum_{d \le T} \lrfloor{\frac Td}\frac{\mu(d)}{d}  = \sum_{d \le T} \frac{\mu(d)}{d} \lrp{\frac Td + O\lrp{1} }
        \\
        &= T\sum_{d \le T} \frac{\mu(d)}{d^2} +  O\lrp{\log T}
        = T\lrp{\frac{6}{\pi^2} + O\lrp{\frac1T}}  + O\lrp{\log T} \\
        &= \frac{6}{\pi^2} T  + O\lrp{\log T},
    \end{align}
    as desired.
\end{proof}

\begin{lemma} \label{lem:sum-phi-over-a2}
    As $T$ tends to infinity, 
    \begin{align}
        \sum_{n \le T} \frac{\phi(n)}{n^2} &= \frac{6}{\pi^2} \log T + \gamma_0 + O\lrp{\frac{\log T}{T}} 
    \end{align}
    for a certain constant $\gamma_0$.
\end{lemma}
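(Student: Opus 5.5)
I will follow the template of the proof of Lemma \ref{lem:sum-phi-over-a}: expand $\phi(n)/n = \sum_{d\mid n}\mu(d)/d$, so that
\begin{align}
    \sum_{n \le T} \frac{\phi(n)}{n^2} = \sum_{n\le T}\frac1n\sum_{d\mid n}\frac{\mu(d)}{d} = \sum_{d \le T} \frac{\mu(d)}{d^2} \sum_{k \le T/d} \frac{1}{k},
\end{align}
after writing $n = dk$. For the inner sum I will use the standard harmonic estimate $\sum_{k\le x} \frac1k = \log x + \gamma + O(\frac1x)$, valid uniformly for $x \ge 1$. Substituting $x = T/d$ gives
\begin{align}
    \sum_{n \le T} \frac{\phi(n)}{n^2} = \sum_{d\le T}\frac{\mu(d)}{d^2}\lrp{\log T - \log d + \gamma} + O\lrp{\frac1T\sum_{d \le T}\frac{1}{d}}.
\end{align}
The error term is $O(\frac{\log T}{T})$, which is exactly the claimed error.

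Next I will complete each of the three main sums to infinity. First, $\sum_{d\le T}\mu(d)/d^2 = \frac{6}{\pi^2} + O(\frac1T)$, as already used in Lemma \ref{lem:sum-phi-over-a}. Multiplying by $\log T + \gamma$ gives $\frac{6}{\pi^2}\log T + \frac{6}{\pi^2}\gamma + O(\frac{\log T}{T})$. Second, $\sum_{d \le T}\mu(d)\log d/d^2$ converges absolutely, and its tail satisfies $\sum_{d>T}\log d/d^2 = O(\frac{\log T}{T})$ by comparison with $\int_T^\infty \log t/t^2\,dt$. So this sum equals $\sum_{d=1}^\infty \mu(d)\log d/d^2 + O(\frac{\log T}{T})$.

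Collecting terms gives the result with
\begin{align}
    \gamma_0 := \frac{6}{\pi^2}\gamma - \sum_{d=1}^\infty \frac{\mu(d)\log d}{d^2}.
\end{align}
Equivalently, $\gamma_0 = \frac{6}{\pi^2}\bigl(\gamma + 2\zeta'(2)/\zeta(2)\bigr)$, since $\sum_{d\ge1} \mu(d)\log d/d^{s} = \zeta'(s)/\zeta(s)^2$. The statement only asks for some constant, so the explicit value is optional.

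No step here is a genuine obstacle. The only points needing care are the bookkeeping of the error terms and making sure they are uniform in $d$. In particular, the harmonic-sum error $O(d/T)$ weighted by $1/d^2$ sums to $O(\log T/T)$ rather than $O(1/T)$. Likewise, the tails of the $\mu(d)/d^2$ series, multiplied by $\log T$, contribute $O(\log T/T)$. Both stay within the stated bound.
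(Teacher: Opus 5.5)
Your proof is correct and is essentially the paper's own argument: the same expansion $\phi(n)/n=\sum_{d\mid n}\mu(d)/d$, the same interchange of summation, the harmonic-sum estimate with error $O(d/T)$, and completion of the two $d$-series, which yields exactly the paper's constant $\gamma_0=\sum_{d\ge1}\mu(d)(\gamma-\log d)/d^2$. One slip in your optional aside: since $\sum_{d\ge1}\mu(d)\log d/d^2=\zeta'(2)/\zeta(2)^2$, the closed form is $\gamma_0=\frac{6}{\pi^2}\bigl(\gamma-\zeta'(2)/\zeta(2)\bigr)$, not $\frac{6}{\pi^2}\bigl(\gamma+2\zeta'(2)/\zeta(2)\bigr)$, though this does not affect the lemma.
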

\begin{proof}
    We have 
    \begin{align}
        \sum_{n \le T} \frac{\phi(n)}{n^2} &= \sum_{n \le T} \frac{1}{n} \sum_{d|n} \frac{\mu(d)}{d} = \sum_{d \le T} \frac{\mu(d)}{d} \sum_{\substack{n \le T \\ d|n}} \frac{1}{n} = \sum_{d \le T} \frac{\mu(d)}{d^2} \sum_{n' \le T/d } \frac{1}{n'}  \\
        &= \sum_{d \le T} \frac{\mu(d)}{d^2} \lrp{\log (T/d) + \gamma + O\lrp{\frac{1}{T/d}} }
        \\
        &= (\log T) \sum_{d \le T} \frac{\mu(d)}{d^2} + \sum_{d \le T} \frac{\mu(d)(\gamma-\log d)}{d^2} + \sum_{d \le T} \frac{\mu(d)}{d} O\lrp{\frac1T}
        \\
        &= (\log T)\lrp{\frac{6}{\pi^2} + O\lrp{\frac1T}} + \lrp{\gamma_0 + O\lrp{\frac{\log T}{T}}} + O\lrp{\frac{\log T}{T}} \\
        &= \frac{6}{\pi^2} \log T + \gamma_0 + O\lrp{\frac{\log T}{T}},
    \end{align}
    as desired.
    Note that $\gamma_0$ here is the constant
    $\displaystyle
        \gamma_0 := \sum_{d \ge 1} \frac{\mu(d)(\gamma-\log d)}{d^2}.
    $
\end{proof}

\begin{lemma}
    \label{lem:sum-phi-sqrt}
    Fix $A \ne 0$, $D > 0$, and $s_2 \ge s_1 \ge \max(0, \frac{-4A}{D})$. Then as $\Delta$ tends to infinity,
    \begin{align}
        &\sum_{ \sqrt{s_1 \Delta} < n \le \sqrt{s_2 \Delta} }
        \phi(n)  
        \sqrt{D+\frac{4A\Delta}{n^2}} \\
        &= \frac{3\Delta}{\pi^2}\Bigg[ \sqrt{Du^2+4Au} +  \frac{4A}{\sqrt D}
            \log\!\lrp{
                \sqrt{Du} + \sqrt{Du+4A}
            }
         \Bigg]_{u=s_1}^{u=s_2} + O\!\lrp{\sqrt{\Delta} \log^2 \Delta}.
    \end{align}
\end{lemma}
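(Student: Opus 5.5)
This is a partial summation (Abel summation) against Lemma \ref{lem:sum-phi}. Write
$$\Phi(T) := \sum_{n\le T}\phi(n) = \frac{3}{\pi^2}T^2 + E(T), \qquad E(T)=O(T\log T),$$
and set
$$f(x) := \sqrt{D + 4A\Delta/x^2},$$
so the sum equals the Stieltjes integral $\int_{\sqrt{s_1\Delta}}^{\sqrt{s_2\Delta}} f(x)\,d\Phi(x)$. The hypothesis $s_1\ge\max(0,-4A/D)$ guarantees $D+4A\Delta/x^2\ge 0$ on the whole range: if $A<0$ the constraint $x^2\ge s_1\Delta\ge -4A\Delta/D$ is exactly what is needed. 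So $f$ is real, continuous and monotone there, increasing if $A<0$ and decreasing if $A>0$.

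\textbf{Main term.} Splitting $d\Phi = \frac{6}{\pi^2}x\,dx + dE$, the main term is
$$\frac{6}{\pi^2}\int_{\sqrt{s_1\Delta}}^{\sqrt{s_2\Delta}} x\sqrt{D+4A\Delta/x^2}\,dx.$$
Substitute $x^2=\Delta u$, so that $x\,dx = \tfrac{\Delta}{2}du$. This becomes
$$\frac{3\Delta}{\pi^2}\int_{s_1}^{s_2}\sqrt{D+4A/u}\,du=\frac{3\Delta}{\pi^2}\int_{s_1}^{s_2}\frac{\sqrt{Du^2+4Au}}{u}\,du.$$
I will evaluate it by differentiating the claimed antiderivative
$$F(u)=\sqrt{Du^2+4Au}+\frac{4A}{\sqrt D}\log\!\bigl(\sqrt{Du}+\sqrt{Du+4A}\bigr).$$
The log term has derivative $\frac{2A}{\sqrt{u}\sqrt{Du+4A}}$. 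Adding $\frac{2Du+4A}{2\sqrt{Du^2+4Au}}$ gives
$$\frac{Du+4A}{\sqrt{u(Du+4A)}}=\sqrt{D+4A/u}.$$
This check is routine.

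\textbf{Error term.} Integrate by parts:
$$\int f\,dE = \bigl[f E\bigr]_{\sqrt{s_1\Delta}}^{\sqrt{s_2\Delta}} - \int E(x) f'(x)\,dx.$$
Since $f$ is bounded by $\sqrt{D+4A/s}$ (or by $\sqrt D$), which is $O(1)$ uniformly, the boundary terms are $O(\sqrt\Delta\log\Delta)$. For the integral, $f$ is monotone, so
$$\left|\int E f'\right|\le \max|E|\cdot\bigl|f(\sqrt{s_2\Delta})-f(\sqrt{s_1\Delta})\bigr| = O(\sqrt{\Delta}\log\Delta).$$
The only delicate case is $s_1=0$, where the range starts near $n=1$. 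If $A>0$ then $f(x)\sim 2\sqrt{A\Delta}/x$ blows up, and the monotonicity bound fails. In that case I will instead bound $\int|E||f'|\,dx$ directly. With $|E(x)|\ll x\log x$ and $|f'(x)|\ll \sqrt{\Delta}/x^2$ for small $x$, this gives
$$\sqrt\Delta\int_1^{\sqrt{s_2\Delta}}\frac{\log x}{x}\,dx\ll\sqrt\Delta\log^2\Delta,$$
which is exactly the source of the $\log^2$ in the statement. The boundary term at the lower end $x=1$ (or the lower end of summation) is $O(\sqrt\Delta)$, since $\Phi$ is tiny there. Also in this case the main-term integral near $u=0$ converges, as $\sqrt{u}\cdot u^{-1}$ is integrable, so no contribution is lost by starting the sum at $n=1$ rather than $0$.

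The main obstacle is this uniform control near $s_1=0$ (and, symmetrically, near the branch point $s_1=-4A/D$ for $A<0$, where $f'$ blows up like an inverse square root). In the latter case $f$ is still monotone and bounded, so the monotone-variation bound handles it with no extra work. I will therefore organize the error estimate as the monotone bound in general, plus the direct $\int|E||f'|$ bound restricted to the case $A>0$, $s_1=0$.
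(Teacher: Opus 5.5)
Your proposal is correct and follows essentially the same route as the paper: Abel summation against Lemma \ref{lem:sum-phi}, with the error controlled by bounding $\int |E(x)|\,|f'(x)|\,dx$ using $|f'(x)| \ll \sqrt{\Delta}/x^2$, which is exactly where the $\log^2\Delta$ arises when $s_1=0$. The differences are only in bookkeeping. You split $d\Phi$ before integrating by parts and check the antiderivative by differentiating it, and your monotone-variation bound shows the error is actually $O(\sqrt{\Delta}\log\Delta)$ when $s_1>0$.
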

\begin{proof}  
    We focus on the case of $s_1 > 0$. The only modification needed for the case of $s_1 = 0$ is that the expression $x^2 + O(x\log x)$ in \eqref{eqn:temp-integral-Oxlogx} below should be understood to be identically $0$ for $0\le x \le 1$.
    Denote 
    \begin{align}
        \Phi(x) := \sum_{n \le x} \phi(n) = \frac{3}{\pi^2}x^2 + O(x\log x) \qquad \text{(by Lemma \ref{lem:sum-phi})},
    \end{align}
    and also observe that
    \begin{align}
        \frac{d}{dx}\lrb{\sqrt{D+\frac{4A\Delta}{x^2}}} =  \frac{-4A\Delta}{x^2\sqrt{Dx^2+4A\Delta}}.
    \end{align}
    Then we have by Abel's summation formula that
    \begin{align}
        &\sum_{\sqrt{s_1 \Delta} < n \le \sqrt{s_2 \Delta}} \phi(n)\sqrt{D+\frac{4A\Delta}{n^2}} \\
        &= \lrb{\Phi(x)\sqrt{D+\frac{4A\Delta}{x^2}}}_{x=\sqrt{s_1 \Delta}}^{x=\sqrt{s_2 \Delta}} - \int_{\sqrt{s_1 \Delta}}^{\sqrt{s_2 \Delta}} \Phi(x) 
        \frac{-4A\Delta}{x^2\sqrt{Dx^2+4A\Delta}}
        \,dx \\
        &= \frac{3}{\pi^2}\lrb{(x^2+O(x \log x))\sqrt{D+\frac{4A\Delta}{x^2}} }_{x=\sqrt{s_1 \Delta}}^{x=\sqrt{s_2 \Delta}} 
         + \frac{3}{\pi^2} \int_{\sqrt{s_1 \Delta}}^{\sqrt{s_2 \Delta}}  \frac{x^2 + O(x \log x)}{x^2} \frac{4A\Delta}{\sqrt{Dx^2+4A\Delta}} \,dx 
         \label{eqn:temp-integral-Oxlogx}\\
        &= \frac{3}{\pi^2}\lrb{\sqrt{Dx^4+4Ax^2}}_{x=\sqrt{s_1\Delta}}^{x=\sqrt{s_2\Delta}}  
         + \frac{3}{\pi^2} \int_{\sqrt{s_1 \Delta}}^{\sqrt{s_2 \Delta}}   \frac{4A\Delta}{\sqrt{Dx^2+4A\Delta}} \,dx + O\!\lrp{\sqrt{\Delta} \log^2 \Delta} \label{eqn:temp-integral-lemma-sum-phi-sqrt}\\
        &= \frac{3}{\pi^2}\lrb{\Delta \sqrt{Du^2+4Au}}_{u=s_1}^{u=s_2} 
         + \frac{3}{\pi^2 } \lrb{\frac{4A\Delta}{\sqrt{D}} 
         \log\!\lrp{
                \sqrt{Dx^2} + \sqrt{Dx^2+4A\Delta}
            }
         }_{x=\sqrt{s_1 \Delta}}^{x=\sqrt{s_1 \Delta}} + O\!\lrp{\sqrt{\Delta} \log^2 \Delta} \\
         &= \frac{3\Delta}{\pi^2}\Bigg[  \sqrt{Du^2+4Au} +  \frac{4A}{\sqrt D}
            \log\!\lrp{
                \sqrt{Du} + \sqrt{Du+4A}
            }
         \Bigg]_{u=s_1}^{u=s_2} + O\!\lrp{\sqrt{\Delta} \log^2 \Delta},
    \end{align}
    as desired.
\end{proof}

%%%%%%%%%%%

\begin{lemma}
    \label{lem:sum-phi-sqrt-neg-discr}
    Fix $A > 0$, $D < 0$, and $0 \le s_1 \le s_2 \le \frac{4A}{-D}$. Then as $\Delta$ tends to infinity,
    \begin{align}
        &\sum_{ \sqrt{s_1 \Delta} < n \le \sqrt{s_2 \Delta} }
        \phi(n)  
        \sqrt{D+\frac{4A\Delta}{n^2}} \\
        &= \frac{3\Delta}{\pi^2}\Bigg[  \sqrt{Du^2+4Au} -  \frac{4A}{\sqrt{-D}}
            \arctan\!\lrp{\frac{\sqrt{Du+4A}}{\sqrt{-Du}}}
         \Bigg]_{u=s_1}^{u=s_2} + O\!\lrp{\sqrt{\Delta} \log^2 \Delta}.
    \end{align}
\end{lemma}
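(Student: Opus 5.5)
This is the same Abel-summation argument as in Lemma \ref{lem:sum-phi-sqrt}; only the antiderivative of the main-term integral changes, because now $D<0$. Set $\Phi(x) := \sum_{n\le x}\phi(n) = \frac{3}{\pi^2}x^2 + O(x\log x)$ by Lemma \ref{lem:sum-phi}. The hypothesis $s_2 \le \frac{4A}{-D}$ guarantees that $D + \frac{4A\Delta}{x^2} \ge 0$ for all $x \le \sqrt{s_2\Delta}$, so the square root is real throughout. As before, we have
\begin{align}
\frac{d}{dx}\lrb{\sqrt{D+\frac{4A\Delta}{x^2}}} = \frac{-4A\Delta}{x^2\sqrt{Dx^2+4A\Delta}}.
\end{align}
We first treat $s_1>0$. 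The case $s_1=0$ is handled by the same convention as in Lemma \ref{lem:sum-phi-sqrt}: the expression $x^2+O(x\log x)$ is read as $0$ for $0\le x\le 1$.

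Apply Abel summation on $(\sqrt{s_1\Delta},\sqrt{s_2\Delta}]$. The boundary term is $\frac{3}{\pi^2}\lrb{\sqrt{Dx^4+4A\Delta x^2}}$ evaluated between the two endpoints, plus an error. Since the square-root factor is $O(1)$ on the range, that error is $O(\sqrt\Delta\log\Delta)$. Substituting $x^2=u\Delta$ turns the boundary term into $\frac{3\Delta}{\pi^2}\lrb{\sqrt{Du^2+4Au}}_{u=s_1}^{u=s_2}$.

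The integral term is
\begin{align}
\frac{3}{\pi^2}\int \frac{4A\Delta}{\sqrt{Dx^2+4A\Delta}}\,dx + \text{error}.
\end{align}
Since $D<0$, we evaluate it with the arcsine/arctangent antiderivative:
\begin{align}
\int\frac{dx}{\sqrt{4A\Delta-|D|x^2}} = -\frac{1}{\sqrt{-D}}\arctan\!\lrp{\frac{\sqrt{Dx^2+4A\Delta}}{\sqrt{-D}\,x}} + \text{const}.
\end{align}
This is checked by differentiating. With $w=\sqrt{Dx^2+4A\Delta}/(\sqrt{-D}x)$, we have $1+w^2 = \frac{4A\Delta}{-Dx^2}$, and the derivative of $w$ is $-\frac{4A\Delta}{\sqrt{-D}\,x^2\sqrt{Dx^2+4A\Delta}}$. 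Combining these gives exactly $1/\sqrt{Dx^2+4A\Delta}$. Substituting $x=\sqrt{u\Delta}$ then produces the claimed term $-\frac{4A}{\sqrt{-D}}\arctan\!\lrp{\frac{\sqrt{Du+4A}}{\sqrt{-Du}}}$.

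The main obstacle is the error term in the integral. It equals
\begin{align}
\int \frac{O(x\log x)}{x^2}\cdot\frac{4A\Delta}{\sqrt{Dx^2+4A\Delta}}\,dx,
\end{align}
and the integrand is unbounded near the upper endpoint $x=\sqrt{4A\Delta/(-D)}$ when $s_2 = \frac{4A}{-D}$. This singularity is integrable, so I would bound the error by $O(\log\Delta)\int_{\sqrt{s_1\Delta}}^{\sqrt{s_2\Delta}} \frac{\Delta\,dx}{x\sqrt{4A\Delta-|D|x^2}}$. Next substitute $x=\sqrt\Delta\,v$. The factor $\Delta$ against $\frac{dx}{x\sqrt{\Delta(\cdots)}}$ leaves $\sqrt\Delta\int\frac{dv}{v\sqrt{4A-|D|v^2}}$. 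Near $v=\sqrt{4A/|D|}$ this integral converges (square-root singularity). Near $v=\sqrt{s_1}$ it is finite when $s_1>0$; when $s_1=0$ the convention cuts the range off at $x\ge1$, i.e. $v\ge\Delta^{-1/2}$, which contributes at most a $\log\Delta$ factor. The total error is therefore $O(\sqrt\Delta\log^2\Delta)$, as claimed.
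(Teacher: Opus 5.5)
Your proposal is correct and follows the paper's proof: Abel summation against $\Phi(x)=\frac{3}{\pi^2}x^2+O(x\log x)$, where the only change from Lemma \ref{lem:sum-phi-sqrt} is the arctangent antiderivative needed because $D<0$. You also explicitly bound the error integral near the square-root singularity at $x=\sqrt{4A\Delta/(-D)}$, a step the paper leaves implicit under ``identical argument,'' and this correctly gives the $O(\sqrt{\Delta}\log^2\Delta)$ error term.
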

\begin{proof}  
    By an identical argument as in \eqref{eqn:temp-integral-lemma-sum-phi-sqrt}, we have that
    \begin{align}
        &\sum_{\sqrt{s_1 \Delta} < n \le \sqrt{s_2 \Delta}} \phi(n)\sqrt{D+\frac{4A\Delta}{n^2}} \\
        &= \frac{3}{\pi^2}\lrb{\sqrt{Dx^4+4Ax^2}}_{x=\sqrt{s_1\Delta}}^{x=\sqrt{s_2\Delta}}  
         + \frac{3}{\pi^2} \int_{\sqrt{s_1 \Delta}}^{\sqrt{s_2 \Delta}}   \frac{4A\Delta}{\sqrt{Dx^2+4A\Delta}} \,dx + O\!\lrp{\sqrt{\Delta} \log^2 \Delta} \\
        &= \frac{3}{\pi^2}\lrb{\Delta \sqrt{Du^2+4Au}}_{u=s_1}^{u=s_2} 
         + \frac{3}{\pi^2 } \lrb{\frac{-4A\Delta}{\sqrt{-D}} 
         \arctan\!\lrp{
            \frac{\sqrt{Dx^2+4A\Delta}}{\sqrt{-Dx^2}}
         }
         }_{x=\sqrt{s_1 \Delta}}^{x=\sqrt{s_1 \Delta}} + O\!\lrp{\sqrt{\Delta} \log^2 \Delta} \\
         &= \frac{3\Delta}{\pi^2}\Bigg[  \sqrt{Du^2+4Au} -  \frac{4A}{\sqrt{-D}}
            \arctan\!\lrp{
                \frac{\sqrt{Du+4A}}{\sqrt{-Du}}
             }
         \Bigg]_{u=s_1}^{u=s_2} + O\!\lrp{\sqrt{\Delta} \log^2 \Delta},
    \end{align}
    as desired.
\end{proof}

%%%%%%%%%%%%%%%%%%%%%%%%%%%%%%%%%%%
%%%%%%%%%%%%%%%%%%%%%%%%%%%%%%%%%%%%

\section{Proof of Theorem \ref{thm:equid-WABC} for \texorpdfstring{$A=0$}{A=0}}
\label{sec:equid-WABC--A=0}

{
\renewcommand{\thetheorem}{\ref{thm:equid-WABC} (\(A=0,B>0\))}
\begin{theorem} 
    \label{thm:equid-W--A=0,B>0}
    Fix $(B,C) \in \RR^2$ with $B > 0$. Then
    \begin{align}
        W_{\Delta}^{(0,B,C)} = \lrcb{\frac{m}{n}  ~:~ n\ge1,~ \gcd(m,n)=1,~ 0 < B mn + C n^2 \le \Delta} 
    \end{align} 
    is equidistributed over $(\frac{-C}{B},\infty)$ with respect to the measure $d\mu = \frac{1}{Bt+C} \,dt$.
\end{theorem}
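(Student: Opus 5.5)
Our approach is to count $W_\Delta^{(0,B,C)}\cap[\alpha,\beta]$ directly for a fixed compact interval $[\alpha,\beta]\subseteq(\frac{-C}{B},\infty)$. We will show that
\begin{align}
    \#\, W_\Delta^{(0,B,C)}\cap[\alpha,\beta] = \frac{6}{\pi^2 B}\,\Delta \log\!\lrp{\frac{B\beta+C}{B\alpha+C}} + O\!\lrp{\Delta^{1/2+\varepsilon}}.
\end{align}
Since $\int_\alpha^\beta \frac{dt}{Bt+C} = \frac1B\log\frac{B\beta+C}{B\alpha+C}$, the ratio of counts for $I_1\subseteq I_2$ then tends to $\mu(I_1)/\mu(I_2)$. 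This proves equidistribution, because the main term is of order $\Delta$ and dominates the error.

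For the count itself, write the condition as $0< n^2(B\frac mn + C)\le\Delta$. For $t=\frac mn\in[\alpha,\beta]$ we have $Bt+C\in[B\alpha+C,\,B\beta+C]$, a compact subinterval of $(0,\infty)$, so positivity is automatic. The remaining condition is $Bm n + Cn^2\le \Delta$, that is, $m\le \frac{\Delta}{Bn}-\frac{C n}{B}$.

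Fix $n$. The admissible $m$ are those coprime to $n$ lying in the interval
\begin{align}
    \alpha n\le m\le \min\!\lrp{\beta n,\ \tfrac{\Delta}{Bn}-\tfrac{Cn}{B}}.
\end{align}
Let $N_\beta=\sqrt{\Delta/(B\beta+C)}$ and $N_\alpha=\sqrt{\Delta/(B\alpha+C)}$. Then:
\begin{itemize}
    \item for $n\le N_\beta$ the interval is $[\alpha n,\beta n]$;
    \item for $N_\beta<n\le N_\alpha$ it is $[\alpha n,\ \frac{\Delta}{Bn}-\frac{Cn}{B}]$;
    \item for $n>N_\alpha$ it is empty.
\end{itemize}
Applying Lemma \ref{lem:count-coprime-range} (as a difference of two counts) gives, for each $n$, the count $\frac{\phi(n)}{n}\cdot(\text{length}) + O(n^{\varepsilon})$. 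Summed over $n\le N_\alpha\asymp\sqrt\Delta$, the errors total $O(\Delta^{1/2+\varepsilon})$.

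The main terms come in three pieces:
\begin{itemize}
    \item The first range contributes $(\beta-\alpha)\sum_{n\le N_\beta}\phi(n)$.
    \item The second range contributes $\sum_{N_\beta<n\le N_\alpha}\phi(n)\lrp{\frac{\Delta}{Bn^2}-\frac{C}{B}-\alpha}$.
    \item Lemma \ref{lem:sum-phi} evaluates $\sum\phi(n)$ over each range, and Lemma \ref{lem:sum-phi-over-a2} gives $\sum_{N_\beta<n\le N_\alpha}\frac{\phi(n)}{n^2}=\frac{6}{\pi^2}\log\frac{N_\alpha}{N_\beta}+O\!\lrp{\frac{\log\Delta}{\sqrt\Delta}}$. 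Multiplied by $\Delta/B$, this produces $\frac{3}{\pi^2B}\Delta\log\frac{B\beta+C}{B\alpha+C}$ with error $O(\sqrt\Delta\log\Delta)$.
\end{itemize}
The $\phi$-sums contribute $\frac{3}{\pi^2}\lrp{(\beta-\alpha)N_\beta^2 - (\tfrac CB+\alpha)(N_\alpha^2-N_\beta^2)}$ plus $O(\sqrt\Delta\log\Delta)$. This cancels exactly: using $N_\beta^2=\frac{\Delta}{B\beta+C}$ and $N_\alpha^2=\frac{\Delta}{B\alpha+C}$, we have $(\tfrac CB+\alpha)N_\alpha^2=\frac\Delta B$ and $(\beta-\alpha+\tfrac CB+\alpha)N_\beta^2=\frac\Delta B$.

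The main obstacle is this bookkeeping of the main terms. In particular, we must verify that the polynomial pieces cancel so that only the logarithmic term survives, at which point the answer matches the measure up to the constant $\frac{3}{\pi^2}$ (hence the constant above is really $\frac{3}{\pi^2 B}$, which is harmless for ratios). We will also have to check that the $O(n^\varepsilon)$ errors from Lemma \ref{lem:count-coprime-range} remain uniform when the intervals have non-integer endpoints. Writing each count as a difference of two counts of the form $\#\{m\le T\}$, with $T$ possibly negative, handled by symmetry $m\mapsto -m$, resolves this.
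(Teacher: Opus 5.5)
Your proposal is correct and follows essentially the same route as the paper: you split $n$ at $\sqrt{\Delta/(B\beta+C)}$ and $\sqrt{\Delta/(B\alpha+C)}$, apply Lemma \ref{lem:count-coprime-range} for each $n$, and then use Lemmas \ref{lem:sum-phi} and \ref{lem:sum-phi-over-a2}, where the polynomial terms cancel exactly and leave $\frac{3\Delta}{\pi^2 B}\log\frac{B\beta+C}{B\alpha+C}+O(\Delta^{1/2+\varepsilon})$. As you noticed yourself, the constant $\frac{6}{\pi^2 B}$ in your opening display should be $\frac{3}{\pi^2 B}$, which does not affect the ratios.
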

}

\begin{proof}
    % Fix $[K_1,\infty) \subseteq (\frac{-C}{B},\infty)$ and $\varepsilon > 0$. In particular, 
    To prove equidistribution, we show that for each $[X,Y] \subseteq (\frac{-C}{B},\infty)$,
    \begin{align} \label{eqn:goal-equid-A=0,B>0} \tag{$*$}
        \#\, W_{\Delta}^{(0,B,C)} \cap [X,Y] = 
        \frac{3\Delta}{\pi^2} \lrb{ 
        \frac1B \log(Bt+C) 
        }_{t=X}^{t=Y} + O\lrp{\Delta^{1/2+\varepsilon}}.
    \end{align}

    In the following, we will only consider $n \ge 1,~ \gcd(m,n)=1$ (and omit these assumptions from our set notation). Note that $B mn + C n^2 > 0$ always holds since $\frac mn \ge X > \frac{-C}{B}$. And observe that $B mn + C n^2 \le \Delta$ iff $\frac mn \le \frac{\Delta}{Bn^2}-\frac{C}{B}$. This means that
    \begin{align} 
        \#\, W_{\Delta}^{(0,B,C)} \cap [X,Y] 
        &= \sum_{n \ge 1} \#\lrcb{ X \le \frac{m}{n} \le Y ~:~  0 < B mn + C n^2 \le \Delta} \\
        &= \sum_{n \ge 1} \#\lrcb{ X \le \frac{m}{n} \le Y ~:~ \frac{m}{n} \le  \frac{\Delta}{Bn^2}-\frac{C}{B}}.
    \end{align}
    Then note that 
    \begin{align}
        Y &\le  \frac{\Delta}{Bn^2}-\frac{C}{B} \quad\text{iff}\quad n \le \sqrt{\frac{\Delta}{BY+C}}, \\
        X &\le  \frac{\Delta}{Bn^2}-\frac{C}{B} \quad\text{iff}\quad n \le \sqrt{\frac{\Delta}{BX+C}},
    \end{align}
    which means that
    \begin{align} 
        &\#\, W_{\Delta}^{(0,B,C)} \cap [X,Y]  \\
        &=
        \sum_{n \le \sqrt{\frac{\Delta}{BY+C}}} \#\lrcb{ X \le \frac{m}{n} \le Y }  \\
        &\quad+
        \sum_{\sqrt{\frac{\Delta}{BY+C}} < n \le \sqrt{\frac{\Delta}{BX+C}}} \#\lrcb{ X \le \frac{m}{n} \le   \frac{\Delta}{Bn^2}-\frac{C}{B} } \\
        &= \sum_{n \le \sqrt{\frac{\Delta}{BY+C}}} \lrb{\phi(n) \Big(Y-X\Big) + O\lrp{n^{\varepsilon}}} 
        \quad \text{uniformly in $n$ \qquad (Lemma \ref{lem:count-coprime-range})} \\
        &\quad+
        \sum_{\sqrt{\frac{\Delta}{BY+C}} < n \le \sqrt{\frac{\Delta}{BX+C}}}  \lrb{\phi(n) \lrp{
            \frac{\Delta}{Bn^2} - \frac{C}{B} - X} + O\lrp{n^{\varepsilon}} 
        } \\
        &=  \sum_{n \le \sqrt{\frac{\Delta}{BY+C}}} \phi(n) Y
        ~-~
        \sum_{n \le \sqrt{\frac{\Delta}{BX+C}}} \phi(n) X
        ~-~
        \frac{C}{B} \sum_{\sqrt{\frac{\Delta}{BY+C}} < n \le \sqrt{\frac{\Delta}{BX+C}}} \phi(n)
        \\
        &\quad+\quad
        \frac{\Delta}{B} \sum_{\sqrt{\frac{\Delta}{BY+C}} < n \le \sqrt{\frac{\Delta}{BX+C}}}  \frac{\phi(n)}{n^2}  \quad+\quad O\lrp{\Delta^{1/2+\varepsilon}} \\
        &=\, \frac{3}{\pi^2} \lrb{t\frac{\Delta}{Bt+C}}_{t=X}^{t=Y} 
        ~-~ \frac{C}{B} \frac{3}{\pi^2}
        \lrb{\frac{\Delta}{Bt+C}}_{t=Y}^{t=X}
        \qquad\qquad\quad\ \ \, \text{(Lemma \ref{lem:sum-phi})}
        \\
        &\quad+
        \frac{\Delta}{B} \frac{6}{\pi^2}\lrb{ 
            \log \sqrt{\frac{\Delta}{Bt+C}} ~+~ \gamma_0
        }_{t=Y}^{t=X} + O\lrp{\Delta^{1/2+\varepsilon}} \qquad\quad \text{(Lemma \ref{lem:sum-phi-over-a2})} \\
        &=\, \frac{3\Delta}{\pi^2} \lrb{
            \frac{t+\frac{C}{B}}{Bt+C}
        }_{t=X}^{t=Y}
        ~+~
        \frac{6\Delta}{B\pi^2} \lrb{ 
         \log \sqrt{\frac{1}{Bt+C}} 
        }_{t=Y}^{t=X} + O\lrp{\Delta^{1/2+\varepsilon}} \\
        &= \frac{3\Delta}{\pi^2} \lrb{ 
        \frac1B \log (Bt+C) 
        }_{t=X}^{t=Y} + O\lrp{\Delta^{1/2+\varepsilon}},
    \end{align}
    verifying \eqref{eqn:goal-equid-A=0,B>0}.
\end{proof}

{
\renewcommand{\thetheorem}{\ref{thm:equid-WABC} (\(A=0,B<0\))}
\begin{theorem} \label{thm:equid-W--A=0,B<0}
    Fix $(B,C) \in \RR^2$ with $B < 0$. Then
    \begin{align}
        W_{\Delta}^{(0,B,C)} = \lrcb{\frac{m}{n}  ~:~ n\ge1,~ \gcd(m,n)=1,~ 0 < B \lrp{\frac mn} + C \le \frac{\Delta}{n^2}} 
    \end{align} 
    is equidistributed over $(-\infty,\frac{-C}{B})$ with respect to the measure $d\mu = \frac{1}{Bt+C} \,dt$.
\end{theorem}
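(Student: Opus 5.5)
I would mirror the proof of Theorem \ref{thm:equid-W--A=0,B>0}, and I see two routes. One is to reduce directly to that theorem via the reflection $m \mapsto -m$. Since $n \ge 1$ is unaffected and $\gcd(-m,n)=\gcd(m,n)$, we have
\begin{align}
    W_{\Delta}^{(0,B,C)} = -\,W_{\Delta}^{(0,-B,C)},
\end{align}
because $B(-m)n + Cn^2 = (-B)mn + Cn^2$. Here $-B > 0$, so Theorem \ref{thm:equid-W--A=0,B>0} shows that $W_{\Delta}^{(0,-B,C)}$ is equidistributed over $(\frac{C}{B},\infty)$ with respect to $\frac{1}{-Bs+C}\,ds$. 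Under $t=-s$, the interval $(\frac{C}{B},\infty)$ maps to $(-\infty,\frac{-C}{B})$ and the measure becomes $\frac{1}{Bt+C}\,dt$. This gives the claim, since equidistribution is preserved by this homeomorphism: compact subintervals $[X,Y]$ correspond to $[-Y,-X]$, and the counts agree exactly.

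For completeness, I would also state the direct count, which is the form used later. For $[X,Y]\subseteq(-\infty,\frac{-C}{B})$ one should get
\begin{align}
    \#\, W_{\Delta}^{(0,B,C)} \cap [X,Y] = \frac{3\Delta}{\pi^2}\lrb{\frac1B\log(Bt+C)}_{t=X}^{t=Y} + O\lrp{\Delta^{1/2+\varepsilon}}.
\end{align}
The leading term is positive since $B<0$ and $BY+C<BX+C$. To derive it directly, note that the condition $Bmn+Cn^2\le\Delta$ becomes $\frac mn \ge \frac{\Delta}{Bn^2}-\frac CB$, because dividing by $B<0$ flips the inequality. Positivity is automatic on $[X,Y]$. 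I would then split the sum over $n$ at $\sqrt{\Delta/(BX+C)}$ and $\sqrt{\Delta/(BY+C)}$; note the roles of $X$ and $Y$ swap relative to the $B>0$ case. After that, apply Lemma \ref{lem:count-coprime-range}, Lemma \ref{lem:sum-phi}, and Lemma \ref{lem:sum-phi-over-a2} exactly as before.

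The only real obstacle is keeping the signs and orientation straight: the flipped inequality, the swapped cutoffs, and $\log(Bt+C)$ with $Bt+C>0$. The reflection argument avoids all of this bookkeeping, so it is the proof I would present.
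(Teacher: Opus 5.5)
Your proposal is correct and is essentially the paper's proof: the paper also shows $-W_{\Delta}^{(0,B,C)} = W_{\Delta}^{(0,-B,C)}$ and applies the $B>0$ case, which gives equidistribution over $(\frac{C}{B},\infty)$ with respect to $\frac{1}{-Bt+C}\,dt$, and then substitutes $t\mapsto -t$. Your direct count formula, with the flipped inequality and the swapped cutoffs, is also consistent with this reduction.
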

}
\begin{proof}
    Observe that 
    \begin{align}
        -W_{\Delta}^{(0,B,C)} 
        &= \lrcb{\frac{-m}{n}  ~:~ n\ge1,~ \gcd(m,n)=1,~ 0 < B mn + C n^2 \le \Delta} \\
        &= \lrcb{\frac{m}{n}  ~:~ n\ge1,~ \gcd(m,n)=1,~ 0 < B (-m)n + C n^2 \le \Delta} \\
        &= W_{\Delta}^{(0,-B,C)}.
    \end{align} 
    Hence by Theorem \ref{thm:equid-WABC} (Case $A = 0, B > 0$), $-W_{\Delta}^{(0,B,C)} = W_{\Delta}^{(0,-B,C)}$ is equidistributed over $(\frac CB, \infty)$ with respect to the measure $d\mu = \frac{1}{-Bt+C}dt$. Applying the transformation $t \mapsto -t$ then yields the desired result.
\end{proof}

%%%%%%%%%%%%%%%%%%%%%%%%%%%%%%%%%%%%%%%%%%%
%%%%%%%%%%%%%%%%%%%%%%%%%%%%%%%%%%%%%%%%%%%

\section{Proof of Theorem \ref{thm:equid-WABC} for \texorpdfstring{$A > 0$}{A>0}}
\label{sec:equid-WABC--A>0}

%%%%%%%%%%%%%%%%%%%%%%%%%%%%%%%%%%%

{
\renewcommand{\thetheorem}{\ref{thm:equid-WABC} (\(A>0,D>0\))}
\begin{theorem} \label{thm:equid-W--A>0,D>0}
Fix $(A,B,C) \in \RR^3$ with $A > 0$ and $D := B^2-4AC > 0$. Then
\begin{align}
    W_{\Delta}^{(A,B,C)} := \Big\{\frac{m}{n} ~:~ n\ge 1,~ \gcd(m,n)=1,~   0 < A m^2 + B mn + C n^2  \le \Delta
    \Big\}
\end{align}
is equidistributed over $(\frac{-B+\sqrt{D}}{2A},\infty] \cup [-\infty,\frac{-B-\sqrt{D}}{2A})$ with respect to the measure $d\mu = \frac{1}{At^2+Bt+C} \,dt$. Here, the interval $(\frac{-B+\sqrt{D}}{2A},\infty] \cup [-\infty,\frac{-B-\sqrt{D}}{2A})$ is understood to wrap around from $\infty$ to $-\infty$.
\end{theorem}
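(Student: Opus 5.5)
Following the pattern of the proof of Theorem \ref{thm:equid-W--A=0,B>0}, I will establish an asymptotic count for each compact subinterval $[X,Y]$ lying inside one of the two components $(\frac{-B+\sqrt D}{2A},\infty)$ or $(-\infty,\frac{-B-\sqrt D}{2A})$:
\begin{align}
    \#\, W_{\Delta}^{(A,B,C)} \cap [X,Y] = \frac{3\Delta}{\pi^2}\lrb{\frac{1}{\sqrt D}\log\lrp{\frac{2At+B-\sqrt D}{2At+B+\sqrt D}}}_{t=X}^{t=Y} + O\lrp{\Delta^{1/2}\log^2\Delta}.
\end{align}
The bracketed expression is an antiderivative of $\frac{1}{At^2+Bt+C}$. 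Compact intervals passing through $\pm\infty$ are handled either by splitting them as $[X,\infty]\cup[-\infty,Y']$ and using a tail count, or, more cleanly, by applying the transformation $t\mapsto -1/t$. Since $(m,n)\mapsto(-n,m)$ sends $W^{(A,B,C)}$ to $W^{(C,-B,A)}$ up to sign normalization, this moves $\infty$ to $0$ and reduces to a bounded interval. That reduction is valid only when $C>0$; otherwise I will first translate $t\mapsto t+k$ for an integer $k$, which leaves the set of reduced fractions invariant and allows me to choose the image of $\infty$ inside $I$.

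For the main count, fix $[X,Y]$ in the right component, where $Q(t):=At^2+Bt+C$ is positive and increasing. By the quadratic formula, the condition $0<n^2Q(m/n)\le\Delta$ is equivalent to
\begin{align}
    \frac mn\le \tau_n:=\frac{-B+\sqrt{D+4A\Delta/n^2}}{2A}.
\end{align}
As in the proof of Theorem \ref{thm:equid-W--A=0,B>0}, I will split the range of $n$. For $n\le\sqrt{\Delta/Q(Y)}$, the count is $\phi(n)(Y-X)+O(n^\varepsilon)$. For $\sqrt{\Delta/Q(Y)}<n\le\sqrt{\Delta/Q(X)}$, the count is $\phi(n)(\tau_n-X)+O(n^\varepsilon)$, by Lemma \ref{lem:count-coprime-range}. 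The first range and the constant parts of the second are evaluated with Lemma \ref{lem:sum-phi}. The remaining term, $\frac{1}{2A}\sum\phi(n)\sqrt{D+4A\Delta/n^2}$, is exactly the content of Lemma \ref{lem:sum-phi-sqrt}, applied with $s_1=1/Q(Y)$ and $s_2=1/Q(X)$. Its hypothesis $s\ge\max(0,-4A/D)$ is automatic because $A>0$. The left component is handled by the symmetry $m\mapsto -m$, which sends $W^{(A,B,C)}$ to $W^{(A,-B,C)}$, as in Theorem \ref{thm:equid-W--A=0,B<0}.

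The step I expect to be the main obstacle is the algebraic simplification. I must check that
\begin{align}
    \frac{3\Delta}{\pi^2}\lrb{\frac{t}{Q(t)}}_{X}^{Y}+\frac{3\Delta}{\pi^2}\lrb{\frac{B}{2AQ(t)}}_{X}^{Y}+\frac{3\Delta}{2A\pi^2}\lrb{\sqrt{Du^2+4Au}+\tfrac{4A}{\sqrt D}\log(\sqrt{Du}+\sqrt{Du+4A})}_{u=1/Q(Y)}^{u=1/Q(X)}
\end{align}
collapses to the claimed logarithm. Substituting $u=1/Q(t)$ gives
\begin{align}
    \sqrt{Du^2+4Au}=\frac{|2At+B|}{Q(t)},
\end{align}
since $(2At+B)^2=D+4AQ(t)$. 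This cancels the rational terms. The logarithm term then becomes $\frac{1}{\sqrt D}\log\frac{(\sqrt D+|2At+B|)^2}{Q(t)}$, which equals the stated antiderivative after using $4AQ=(2At+B)^2-D$. The error terms total $O(\Delta^{1/2+\varepsilon})$, so dividing two such counts gives the ratio $\mu(I_1)/\mu(I_2)$, which proves equidistribution.
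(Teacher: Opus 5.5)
Your proposal is correct and is essentially the paper's proof. It uses the same split of the $n$-range at $\sqrt{\Delta/Q(Y)}$ and $\sqrt{\Delta/Q(X)}$, Lemmas \ref{lem:count-coprime-range}, \ref{lem:sum-phi} and \ref{lem:sum-phi-sqrt} with $s_1=1/Q(Y)$, $s_2=1/Q(X)$, the same cancellation via $(2At+B)^2=D+4AQ(t)$, and the $m\mapsto -m$ symmetry for the left component. For wrap-around intervals the paper takes your first option: it splits at $\infty$ and allows $Y=\infty$, i.e.\ $s_1=0$ in Lemma \ref{lem:sum-phi-sqrt}.

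Drop your ``cleaner'' alternative, because it does not work as stated. Sending a wrap-around $J$ to a bounded interval by $t\mapsto -1/(t-k)$, while keeping a positive leading coefficient $Q(k)$, needs an integer $k\in I\setminus J$, and none need exist. For $Q(t)=t^2+3t+2$ and $J=[-\tfrac12,\infty]\cup[-\infty,-3]$, the only integers outside $J$ are the roots $-1$ and $-2$. This example also has $C=2>0$ with $0\in J$, so the untranslated map fails there too. You would need a general element of $\mathrm{SL}_2(\ZZ)$, or the splitting.
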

}
\begin{proof}
To prove equidistribution, we show that for all $[X,Y] \subseteq (\frac{-B+\sqrt{D}}{2A},\infty] \cup [-\infty,\frac{-B-\sqrt{D}}{2A})$ (including those subintervals wrapping around from $\infty$ to $-\infty$),
\begin{align} \label{eqn:goal-equid-A>0,D>0} \tag{$**$}
    \#\, W_{\Delta}^{(A,B,C)} \cap [X,Y] =
    \frac{3\Delta}{\pi^2}
    \Bigg[ 
        \frac{1}{\sqrt D}
        \log\lrp{
        \frac{
            t - \frac{-B+\sqrt{D}}{2A}
        }{
            t - \frac{-B-\sqrt{D}}{2A}
        }
        }
    \Bigg]_{t=X}^{t=Y} 
         ~+~ O(\Delta^{1/2+\varepsilon}).
\end{align}

%%%%%%%%%%%%%%%%%%%%%%%%%%%%%%%%%%

It suffices to show \eqref{eqn:goal-equid-A>0,D>0} for $[X,Y] \subseteq (\frac{-B+\sqrt{D}}{2A},\infty]$ and for $[X,Y] \subseteq [-\infty, \frac{-B-\sqrt{D}}{2A})$.

\noindent
\textbf{Part 1: $[X,Y] \subseteq (\frac{-B+\sqrt{D}}{2A},\infty]$.}

In the following, we will only consider $n \ge 1,~ \gcd(m,n)=1$ (and omit these assumptions from our set notation).
For this part,  
\begin{align}
    W_{\Delta}^{(A,B,C)} \cap [X,Y] 
    &= \lrcb{X \le \frac{m}{n} \le Y ~:~  0 < A m^2 + B mn + C n^2  \le \Delta
    } \\
    &= \lrcb{X \le \frac{m}{n} \le Y ~:~  0 < A \lrp{\frac{m}{n}}^2 + B \lrp{\frac{m}{n}} + C  \le \frac{\Delta}{n^2}
    } \\
    &= \lrcb{X \le \frac{m}{n} \le Y ~:~  \frac{m}{n} \le \frac{-B+\sqrt{D+4A\frac{\Delta}{n^2}}}{2A}
    }. 
\end{align}
Then note that
\begin{align}
    &Y \le  \frac{-B+\sqrt{D+4A\frac{\Delta}{n^2}}}{2A}
     \qquad \text{iff}\qquad  n \le \sqrt{\frac{\Delta}{AY^2+BY+C}},
     \\
    &X \le  \frac{-B+\sqrt{D+4A\frac{\Delta}{n^2}}}{2A}
     \qquad \text{iff}\qquad  n \le \sqrt{\frac{\Delta}{AX^2+BX+C}},
\end{align}
which means that
\begin{align}
    & \#\, W_{\Delta}^{(A,B,C)} \cap [X,Y] \\
    &=
    \sum_{ n \le \sqrt{\frac{\Delta}{AY^2+BY+C}} }
    \# \lrcb{X \le \frac{m}{n} \le Y
    }
    \\
    &\quad+
    \sum_{
    \sqrt{\frac{\Delta}{AY^2+BY+C}} < n \le \sqrt{\frac{\Delta}{AX^2+BX+C}}}
    \#\lrcb{X \le \frac{m}{n} \le \frac{-B+\sqrt{D+4A\frac{\Delta}{n^2}}}{2A}   
    } \\
    &=
    \sum_{ n \le \sqrt{\frac{\Delta}{AY^2+BY+C}} } 
    \lrb{
        \phi(n) \Big(Y-X\Big) + O(n^{\varepsilon})
    } \qquad \text{uniformly in $n$ \qquad\qquad (Lemma \ref{lem:count-coprime-range})} \\
    &\quad+
    \sum_{ \sqrt{\frac{\Delta}{AY^2+BY+C}} < n \le \sqrt{\frac{\Delta}{AX^2+BX+C}} }
    \lrb{\phi(n) \lrp{ 
        \frac{-B+\sqrt{D+4A\frac{\Delta}{n^2}}}{2A}
        - X
    }
    + O(n^{\varepsilon})
    }  \\
    &=
    \sum_{ n \le \sqrt{\frac{\Delta}{AY^2+BY+C}}} 
    \phi(n)Y \ \ - \sum_{ n \le \sqrt{\frac{\Delta}{AX^2+BX+C}}} 
    \phi(n)X \quad  \\
    &\quad+
    \sum_{ \sqrt{\frac{\Delta}{AY^2+BY+C}} < n \le \sqrt{\frac{\Delta}{AX^2+BX+C}} }
    \phi(n)  
    \frac{-B+\sqrt{D+4A\frac{\Delta}{n^2}}}{2A}
    \quad+ \quad O(\Delta^{1/2+\varepsilon})
    \label{eqn:temp-equid-sum-sqrt-A>0,D>0}
    \\
    &= \frac{3}{\pi^2} \lrb{ t  \frac{\Delta}{At^2+Bt+C}}_{t=X}^{t=Y} ~+~ \frac{-B}{2A}\frac{3}{\pi^2} \lrb{  \frac{\Delta}{At^2+Bt+C}}_{t=Y}^{t=X} \qquad\quad \text{(Lemmas \ref{lem:sum-phi} and \ref{lem:sum-phi-sqrt})}
    \\
    &\quad +~ \frac{1}{2A} \frac{3\Delta}{\pi^2}\Bigg[ \sqrt{Du^2+4Au} +  \frac{4A}{\sqrt D}
            \log\lrp{
                \sqrt{Du} + \sqrt{Du+4A}
            }
         \Bigg]_{u=\frac{1}{AY^2+BY+C}}^{u=\frac{1}{AX^2+BX+C}} ~+~ O(\Delta^{1/2+\varepsilon}) \\
    &= \frac{3\Delta}{\pi^2} \lrb{ \frac{{t+\frac{B}{2A}}}{At^2+Bt+C}}_{t=X}^{t=Y} 
    ~+~ \frac{3\Delta}{\pi^2} \lrb{
        \frac{1}{2A} \sqrt{Du^2+4Au}
    }_{u=\frac{1}{AY^2+BY+C}}^{u=\frac{1}{AX^2+BX+C}}
    \\
    &\quad +~ \frac{3\Delta}{\pi^2}\Bigg[ 
            \frac{2}{\sqrt D}
            \log\lrp{
                \sqrt{Du} + \sqrt{Du+4A}
            }
         \Bigg]_{u=\frac{1}{AY^2+BY+C}}^{u=\frac{1}{AX^2+BX+C}} ~+~ O(\Delta^{1/2+\varepsilon}). 
\end{align}

Observe here that for $u = \frac{1}{At^2+Bt+C}$ with $t \ge \frac{-B}{2A}$, 
\begin{align}
    \sqrt{Du + 4A} &= \sqrt{\frac{(B^2-4AC) + 4A(At^2+Bt+C)}{At^2+Bt+C}} = \sqrt{\frac{B^2 + 4A^2t^2 + 4ABt}{At^2+Bt+C}} = \frac{2At+B}{\sqrt{At^2+Bt+C}},
\end{align}
so that
\begin{align}
    \lrb{
        \frac{1}{2A} \sqrt{Du^2+4Au}
    }_{u=\frac{1}{AY^2+BY+C}}^{u=\frac{1}{AX^2+BX+C}} = \lrb{\frac{t+\frac{B}{2A}}{At^2+Bt+C}}_{t=Y}^{t=X}.
\end{align}
This then means that 
\begin{align}
     & \#\, W_{\Delta}^{(A,B,C)} \cap [X,Y] \\ 
    &=  \frac{3\Delta}{\pi^2}\Bigg[ 
            \frac{2}{\sqrt D}
            \log\lrp{
                \sqrt{Du} + \sqrt{Du+4A}
            }
         \Bigg]_{u=\frac{1}{AY^2+BY+C}}^{u=\frac{1}{AX^2+BX+C}} ~+~ O(\Delta^{1/2+\varepsilon}) \label{eqn:tempVABC-log-formula} \\
    &=  \frac{3\Delta}{\pi^2}\Bigg[
            \frac{2}{\sqrt D}
            \log\lrp{  
                \frac{\sqrt D + 2At+B}{\sqrt{At^2+Bt+C}}
            }
         \Bigg]_{t=Y}^{t=X} 
        ~+~ O(\Delta^{1/2+\varepsilon}) \\
    &=  \frac{3\Delta}{\pi^2}\Bigg[ 
            \frac{2}{\sqrt D} 
            \log\lrp{
                \frac{
                    2A\lrp{
                        t - \frac{-B-\sqrt{D}}{2A}
                    }
                }{
                    \sqrt{At^2+Bt+C}
                }
            }
         \Bigg]_{t=Y}^{t=X} 
         ~+~ O(\Delta^{1/2+\varepsilon}) \\
    &=  \frac{3\Delta}{\pi^2}\Bigg[ 
            \frac{1}{\sqrt D} 
            \log\lrp{
                \frac{
                    4A^2\lrp{
                        t - \frac{-B-\sqrt{D}}{2A}
                    }^2
                }{
                    At^2+Bt+C
                }
            }
         \Bigg]_{t=Y}^{t=X} 
         ~+~ O(\Delta^{1/2+\varepsilon}) \\
    &=  \frac{3\Delta}{\pi^2}\Bigg[ 
            \frac{1}{\sqrt D} \log\lrp{\frac{{
                t - \frac{-B-\sqrt{D}}{2A}
            }}{t - \frac{-B+\sqrt{D}}{2A}}}
         \Bigg]_{t=Y}^{t=X} 
         ~+~ O(\Delta^{1/2+\varepsilon}) \\
    &= \frac{3\Delta}{\pi^2}
    \Bigg[ 
        \frac{1}{\sqrt D}
        \log\lrp{
        \frac{
            t - \frac{-B+\sqrt{D}}{2A}
        }{
            t - \frac{-B-\sqrt{D}}{2A}
        }
        }
    \Bigg]_{t=X}^{t=Y} 
         ~+~ O(\Delta^{1/2+\varepsilon}),
\end{align}
verifying \eqref{eqn:goal-equid-A>0,D>0}.

%%%%%%%%%%%%%%%%%%%%%%%%%%%%%%%

\noindent
\textbf{Part 2: $[X,Y] \subseteq [-\infty, \frac{-B-\sqrt{D}}{2A})$. }

For this part, we use symmetry to reduce to Part 1 (for parameters $A,-B,C$). In particular, we have 
\begin{align}
    \#\, W_{\Delta}^{(A,B,C)} \cap [X,Y] 
    &= \#\lrcb{X \le \frac{m}{n} \le Y ~:~   0 < A m^2 + B mn + C n^2  \le \Delta
    } \\
    &= \#\lrcb{X \le \frac{-m}{n} \le Y ~:~  0 < A (-m)^2 + B (-m)n + Cn^2  \le \Delta
    } \\
    &= \#\lrcb{-Y \le \frac{m}{n} \le -X ~:~  0 < A m^2 - B mn + C n^2  \le \Delta
    } \\
    &= \#\, W_{\Delta}^{(A,-B,C)} \cap [-Y,-X] \\
    &= \frac{3\Delta}{\pi^2}
    \Bigg[ 
        \frac{1}{\sqrt D}
        \log\lrp{
        \frac{
            t - \frac{B+\sqrt{D}}{2A}
        }{
            t - \frac{B-\sqrt{D}}{2A}
        }
        }
    \Bigg]_{t=-Y}^{t=-X} 
         ~+~ O(\Delta^{1/2+\varepsilon}) \qquad \text{(by Part 1)} \\
    &= \frac{3\Delta}{\pi^2}
    \Bigg[ 
        \frac{1}{\sqrt D}
        \log\lrp{
        \frac{
            t - \frac{-B-\sqrt{D}}{2A}
        }{
            t - \frac{-B+\sqrt{D}}{2A}
        }
        }
    \Bigg]_{t=Y}^{t=X} 
         ~+~ O(\Delta^{1/2+\varepsilon})  \\
    &= \frac{3\Delta}{\pi^2}
    \Bigg[ 
        \frac{1}{\sqrt D}
        \log\lrp{
        \frac{
            t - \frac{-B+\sqrt{D}}{2A}
        }{
            t - \frac{-B-\sqrt{D}}{2A}
        }
        }
    \Bigg]_{t=X}^{t=Y} 
         ~+~ O(\Delta^{1/2+\varepsilon}),
\end{align}
verifying \eqref{eqn:goal-equid-A>0,D>0}.
\end{proof}

{
\renewcommand{\thetheorem}{\ref{thm:equid-WABC} (\(A>0,D<0\))}
\begin{theorem}  \label{thm:equid-W--A>0,D<0}
Fix $(A,B,C) \in \RR^3$ with $A > 0$ and $D := B^2-4AC < 0$. Then
\begin{align}
    W_{\Delta}^{(A,B,C)} := \Big\{\frac{m}{n} ~:~ n\ge 1,~ \gcd(m,n)=1,~   0 < A m^2 + B mn + C n^2  \le \Delta
    \Big\}
\end{align}
is equidistributed over  $[-\infty,\infty]$ 
with respect to the measure $d\mu = \frac{1}{At^2+Bt+C} \,dt$.
\end{theorem}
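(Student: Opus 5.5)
We follow the template of Theorem \ref{thm:equid-W--A>0,D>0}. It suffices to show that for every $[X,Y]\subseteq\RR$,
\begin{align}
    \#\, W_{\Delta}^{(A,B,C)} \cap [X,Y] = \frac{3\Delta}{\pi^2}\Bigg[\frac{2}{\sqrt{-D}}\arctan\lrp{\frac{2At+B}{\sqrt{-D}}}\Bigg]_{t=X}^{t=Y} + O(\Delta^{1/2+\varepsilon}),
\end{align}
since $\frac{2}{\sqrt{-D}}\arctan\frac{2At+B}{\sqrt{-D}}$ is an antiderivative of $\frac{1}{At^2+Bt+C}$. The point $\infty$ carries zero mass and contributes a single point, so intervals containing $\infty$ follow by complementation: take $[X,\infty]\cup[-\infty,Y']$ and subtract the count on $(Y',X)$ from the total. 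The total count is obtained from the same computation with $X,Y\to\mp\infty$, or more cleanly from the count for $n\le\sqrt{4A\Delta/(-D)}$ below.

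Since $D<0$, we have $At^2+Bt+C>0$ for all $t$, so the condition is only $A(\frac mn)^2+B\frac mn+C\le \frac{\Delta}{n^2}$. This holds iff $\frac mn$ lies in the interval $\lrb{\frac{-B-\sqrt{D+4A\Delta/n^2}}{2A},\ \frac{-B+\sqrt{D+4A\Delta/n^2}}{2A}}$, which is nonempty iff $n\le\sqrt{4A\Delta/(-D)}$. By the reflection $m\mapsto -m$ used in Part 2 of Theorem \ref{thm:equid-W--A>0,D>0}, we may split $[X,Y]$ at $-\frac{B}{2A}$ and assume $[X,Y]\subseteq[-\frac{B}{2A},\infty)$. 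Then only the upper endpoint matters.

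\textbf{Case $X\le Y$ on the right half.} For $n\le\sqrt{\Delta/(AY^2+BY+C)}$ the whole interval $[X,Y]$ is admissible. For $\sqrt{\Delta/(AY^2+BY+C)}<n\le\sqrt{\Delta/(AX^2+BX+C)}$ the count runs from $X$ up to the root. By Lemma \ref{lem:count-coprime-range}, this gives exactly the expression \eqref{eqn:temp-equid-sum-sqrt-A>0,D>0}. We evaluate the plain $\phi$-sums with Lemma \ref{lem:sum-phi}. For the square-root sum we use Lemma \ref{lem:sum-phi-sqrt-neg-discr} instead of Lemma \ref{lem:sum-phi-sqrt}, with $s_1=\frac{1}{AY^2+BY+C}$ and $s_2=\frac{1}{AX^2+BX+C}$. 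Both are at most $\frac{4A}{-D}$ because the minimum of $At^2+Bt+C$ is $\frac{-D}{4A}$, so the lemma's hypothesis holds. The error terms total $O(\Delta^{1/2+\varepsilon})$ as before.

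The polynomial pieces cancel exactly as in Theorem \ref{thm:equid-W--A>0,D>0}. With $u=\frac{1}{At^2+Bt+C}$ and $t\ge-\frac{B}{2A}$ we have $\sqrt{Du+4A}=\frac{2At+B}{\sqrt{At^2+Bt+C}}$, hence $\frac1{2A}\sqrt{Du^2+4Au}=\frac{t+B/(2A)}{At^2+Bt+C}$, which cancels the terms from Lemma \ref{lem:sum-phi}. What remains is
\begin{align}
    -\frac{3\Delta}{\pi^2}\cdot\frac{2}{\sqrt{-D}}\Bigg[\arctan\lrp{\frac{\sqrt{Du+4A}}{\sqrt{-Du}}}\Bigg]_{u=s_1}^{u=s_2}.
\end{align}
Substituting, $\frac{\sqrt{Du+4A}}{\sqrt{-Du}}=\frac{2At+B}{\sqrt{-D}}$, because $\sqrt{-Du}=\frac{\sqrt{-D}}{\sqrt{At^2+Bt+C}}$. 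The bracket therefore becomes $\frac{2}{\sqrt{-D}}\lrb{\arctan\frac{2At+B}{\sqrt{-D}}}_{t=X}^{t=Y}$, with the orientation of $u$ reversed relative to $t$, which absorbs the minus sign.

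\textbf{Main obstacle.} This is bookkeeping near $t=-\frac{B}{2A}$, that is, near $n\approx\sqrt{4A\Delta/(-D)}$, where the admissible interval shrinks to a point. There we must make sure the split at $-\frac{B}{2A}$ does not double-count, and that Lemma \ref{lem:sum-phi-sqrt-neg-discr} applies up to $s_2=\frac{4A}{-D}$. The square root vanishes there, so the Abel-summation error remains $O(\sqrt\Delta\log^2\Delta)$, and the boundary point contributes at most $O(\Delta^{1/2})$.
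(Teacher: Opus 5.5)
Your proposal is correct and follows the paper's proof essentially verbatim. You split at $-\frac{B}{2A}$, reduce the left half by $m\mapsto -m$, and rerun the computation of Theorem \ref{thm:equid-W--A>0,D>0} with Lemma \ref{lem:sum-phi-sqrt-neg-discr} in place of Lemma \ref{lem:sum-phi-sqrt}, using $s_1,s_2\le \frac{4A}{-D}$ and $\frac{\sqrt{Du+4A}}{\sqrt{-Du}}=\frac{2At+B}{\sqrt{-D}}$. The only difference is cosmetic: you handle intervals through $\infty$ by complementation, which for a ray $[X,\infty]$ leaves an unbounded complement. The paper avoids this by allowing $Y=\infty$ in Part 1 (i.e.\ $s_1=0$ in the lemma), and your computation goes through unchanged in that case too.
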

}
\begin{proof}
To prove equidistribution, we show that for all $[X,Y] \subseteq [-\infty,\infty]$,
\begin{align} \label{eqn:goal-equid-A>0,D<0} \tag{$*{*}*$}
    \#\, W_{\Delta}^{(A,B,C)} \cap [X,Y] = 
    \frac{3\Delta}{\pi^2}\Bigg[ 
        \frac{2}{\sqrt{-D}} \arctan\!\lrp{\frac{2At+B}{\sqrt{-D}}}
         \Bigg]_{t=X}^{t=Y} ~+~ O(\Delta^{1/2+\varepsilon}).
\end{align}

%%%%%%%%%%%%%%%%%%

It suffices to show \eqref{eqn:goal-equid-A>0,D<0} for 
$[X,Y] \subseteq [\frac{-B}{2A},\infty]$ and for $[X,Y] \subseteq [-\infty,\frac{-B}{2A}]$.

\noindent
\textbf{Part 1: $[X,Y] \subseteq [\frac{-B}{2A}, \infty]$.}

Note that the result of Lemma \ref{lem:sum-phi-sqrt-neg-discr} is the same as Lemma \ref{lem:sum-phi-sqrt}, except with the function
\begin{align}
    \frac{-1}{\sqrt{-D}} \arctan\!\lrp{ \frac{\sqrt{Du+4A}}{\sqrt{-Du}}}
    \qquad \text{replacing} \qquad
    \frac{1}{\sqrt{D}} \log\lrp{
                \sqrt{Du} + \sqrt{Du+4A}
            }.
\end{align}

By an identical argument as for \eqref{eqn:tempVABC-log-formula}, we have that
\begin{align}
    \#\, W_{\Delta}^{(A,B,C)} \cap [X,Y] 
    &=\frac{3\Delta}{\pi^2}\Bigg[ 
            \frac{-2}{\sqrt{-D}} \arctan\lrp{\frac{\sqrt{Du+4A}}{\sqrt{-Du}}}
         \Bigg]_{u=\frac{1}{AY^2+BY+C}}^{u=\frac{1}{AX^2+BX+C}} ~+~ O(\Delta^{1/2+\varepsilon}) \\
    &=\frac{3\Delta}{\pi^2}\Bigg[ 
        \frac{2}{\sqrt{-D}} \arctan\lrp{\frac{2At+B}{\sqrt{-D}}}
         \Bigg]_{t=X}^{t=Y} ~+~ O(\Delta^{1/2+\varepsilon}),
\end{align}
verifying \eqref{eqn:goal-equid-A>0,D<0}.

%%%

\noindent
\textbf{Part 2: $[X,Y] \subseteq [-\infty, \frac{-B}{2A}]$. }

For this part, one can use an identical symmetry argument as in Theorem \ref{thm:equid-W--A>0,D>0} to reduce to Part 1 (for parameters $A,-B,C$).
\end{proof}

%%%%%%%%%%%%%%%%%%%%%%%%%%%%%%

{
\renewcommand{\thetheorem}{\ref{thm:equid-WABC} (\(A>0,D=0\))}
\begin{theorem} \label{thm:equid-W--A>0,D=0}
    Fix $(A,B,C) \in \RR^3$ with $A > 0$ and $D := B^2-4AC=0$. Then
    \begin{align}
        W_{\Delta}^{(A,B,C)} = \lrcb{\frac{m}{n}  ~:~ n\ge1,~ \gcd(m,n)=1,~ 0 < A m^2 +B mn + C n^2 \le \Delta} 
    \end{align} 
    is equidistributed over $(\frac{-B}{2A},\infty]\cup[-\infty,\frac{-B}{2A})$ with respect to the measure $d\mu = \frac{1}{At^2+Bt+C} \,dt$. Here, the interval $(\frac{-B}{2A},\infty]\cup[-\infty,\frac{-B}{2A})$ is understood to wrap around from $\infty$ to $-\infty$.
\end{theorem}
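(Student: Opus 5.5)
The plan follows the proof of Theorem \ref{thm:equid-W--A>0,D>0}. With $D=0$ and $t_0:=\frac{-B}{2A}$, we have $At^2+Bt+C = A(t-t_0)^2$. The goal is to show that for every $[X,Y]\subseteq (t_0,\infty]$, and symmetrically for $[X,Y]\subseteq[-\infty,t_0)$,
\begin{align}
    \#\, W_{\Delta}^{(A,B,C)} \cap [X,Y] = \frac{3\Delta}{\pi^2}\lrb{\frac{-1}{A(t-t_0)}}_{t=X}^{t=Y} + O(\Delta^{1/2+\varepsilon}),
\end{align}
where $\frac{-1}{A(t-t_0)}$ is an antiderivative of $\frac{1}{At^2+Bt+C}$. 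It vanishes at $t=\pm\infty$, so the wrap-around intervals are handled by splitting at $\infty$. The lower range $[-\infty,t_0)$ reduces to the upper one by the substitution $m\mapsto -m$, i.e. $(A,B,C)\mapsto(A,-B,C)$, exactly as in Part 2 of Theorem \ref{thm:equid-W--A>0,D>0}.

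For $[X,Y]\subseteq(t_0,\infty)$, every $\frac mn\ge X>t_0$ automatically has $A(\frac mn - t_0)^2>0$. The condition $A m^2+Bmn+Cn^2\le\Delta$ is then equivalent to $\frac mn\le t_0+\frac{1}{n}\sqrt{\Delta/A}$. The condition $Y$ (resp.\ $X$) $\le t_0+\frac1n\sqrt{\Delta/A}$ is equivalent to $n\le \sqrt{\Delta/(A(Y-t_0)^2)}$ (resp.\ with $X$). We split the sum over $n$ at these two thresholds and apply Lemma \ref{lem:count-coprime-range}; the error terms total $O(\Delta^{1/2+\varepsilon})$. This gives
\begin{align}
    \sum_{n\le N_Y}\phi(n)Y - \sum_{n\le N_X}\phi(n)X + t_0\sum_{N_Y<n\le N_X}\phi(n) + \sqrt{\Delta/A}\sum_{N_Y<n\le N_X}\frac{\phi(n)}{n},
\end{align}
where $N_t := \frac{\sqrt{\Delta/A}}{t-t_0}$. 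Since the square root is now exactly linear in $1/n$, the sum that Lemma \ref{lem:sum-phi-sqrt} handled in the $D>0$ case is replaced by a $\phi(n)/n$ sum, evaluated with Lemma \ref{lem:sum-phi-over-a}. Lemma \ref{lem:sum-phi} gives the main terms $\frac{3}{\pi^2}N_t^2 (t-t_0)$ from the first three sums, and Lemma \ref{lem:sum-phi-over-a} gives $\frac{6}{\pi^2}\sqrt{\Delta/A}\,(N_X-N_Y)$ from the last. Substituting $N_t$, the first three contribute $\frac{3\Delta}{\pi^2}\lrb{\frac{1}{A(t-t_0)}}_{t=X}^{t=Y}$ and the last contributes $\frac{6\Delta}{\pi^2}\lrb{\frac{1}{A(t-t_0)}}_{t=Y}^{t=X}$. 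Their sum is the claimed main term. The error terms are $O(\sqrt\Delta\log\Delta)$, which is within the target.

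The main obstacle is the point $Y=\infty$ (and, after reflection, $X=-\infty$). There $N_Y=0$, so the first sum is absent, and the count for each $n$ is the number of coprime $m$ with $nX\le m\le nt_0+\sqrt{\Delta/A}$. This is covered by the same computation with the $Y$-terms dropped, and the main term $\frac{3\Delta}{\pi^2}\cdot\frac{1}{A(X-t_0)}$ is consistent with the formula. One should also check that intervals containing $\pm\infty$ in their interior are handled by splitting them as $[X,\infty]\cup[-\infty,Y]$. Since the endpoint $t_0$ itself is excluded from every compact subinterval, no degeneracy arises near $t_0$. Once the count formula holds for all such intervals, it implies $d\mu$-equidistribution by taking ratios.
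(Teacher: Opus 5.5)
Your proposal is correct and follows the paper's proof essentially step for step. You split the $n$-sum at $\sqrt{\Delta/(AY^2+BY+C)}$ and $\sqrt{\Delta/(AX^2+BX+C)}$, apply Lemma \ref{lem:count-coprime-range}, and then use Lemmas \ref{lem:sum-phi} and \ref{lem:sum-phi-over-a} in place of Lemma \ref{lem:sum-phi-sqrt}; your antiderivative $\frac{-1}{A(t-t_0)}$ is the paper's $\frac{-2}{2At+B}$, the $m\mapsto -m$ reflection for $[-\infty,t_0)$ is the same as in Part 2, and your explicit check of the endpoint $Y=\infty$ and of wrap-around intervals only makes explicit what the paper leaves implicit.
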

\addtocounter{theorem}{-1}
}
\begin{proof}
% Fix $[K_1,\infty) \cup (-\infty, K_2] \subseteq (\frac{-B}{2A},\infty) \cup (-\infty,\frac{-B}{2A})$ and $\varepsilon > 0$.

To prove equidistribution, we show that for all $[X,Y] \subseteq (\frac{-B}{2A},\infty] \cup [-\infty,\frac{-B}{2A})$ (including those subintervals wrapping around from $\infty$ to $-\infty$),
\begin{align} \label{eqn:goal-equid-A>0,D=0} \tag{$*{*}{*}*$}
    \#\, W_{\Delta}^{(A,B,C)} \cap [X,Y] =
    \frac{3\Delta}{\pi^2}
    \Bigg[ 
        \frac{-2}{2At+B}
    \Bigg]_{t=X}^{t=Y} 
         ~+~ O(\Delta^{1/2+\varepsilon}).
\end{align}

%%%%%%%%%%%%%%%%%%%%%%%%%%%%%%%%%%

It suffices to show \eqref{eqn:goal-equid-A>0,D=0} for $[X,Y] \subseteq (\frac{-B}{2A},\infty]$ and for $[X,Y] \subseteq [-\infty,\frac{-B}{2A})$.

\noindent
\textbf{Part 1: $[X,Y] \subseteq (\frac{-B}{2A},\infty]$.}

By an identical argument as for \eqref{eqn:temp-equid-sum-sqrt-A>0,D>0}, we have that
\begin{align}
    & \#\, W_{\Delta}^{(A,B,C)} \cap [X,Y] \\
    &=
    \sum_{ n \le \sqrt{\frac{\Delta}{AY^2+BY+C}}} 
    \phi(n)Y \ \ - \sum_{ n \le \sqrt{\frac{\Delta}{AX^2+BX+C}}} 
    \phi(n)X \quad  \\
    &\quad+
    \sum_{ \sqrt{\frac{\Delta}{AY^2+BY+C}} < n \le \sqrt{\frac{\Delta}{AX^2+BX+C}} }
    \lrp{
        \frac{-B}{2A} \phi(n) 
        +\frac{\sqrt \Delta}{\sqrt A} \frac{\phi(n)}{n}
    }
    \quad+ \quad O(\Delta^{1/2+\varepsilon}) \\
    &= \frac{3}{\pi^2} \lrb{ t  \frac{\Delta}{At^2+Bt+C}}_{t=X}^{t=Y} ~+~ \frac{-B}{2A}\frac{3}{\pi^2} \lrb{  \frac{\Delta}{At^2+Bt+C}}_{t=Y}^{t=X} \qquad\quad \text{(Lemma \ref{lem:sum-phi})}
    \\
    &\quad +~ \frac{\sqrt \Delta}{\sqrt A} \frac{3}{\pi^2}
        \Bigg[ 
        \sqrt{
            \frac{\Delta}{At^2+Bt+C}    
        }
        \Bigg]_{t=Y}^{t=X} 
    ~+~ O(\Delta^{1/2+\varepsilon})
    \qquad\qquad\qquad\quad\text{(Lemma \ref{lem:sum-phi-over-a})}
    \\
    &= \frac{3\Delta}{\pi^2} \lrb{   \frac{t- \frac{-B}{2A}}{At^2+Bt+C}}_{t=X}^{t=Y} ~+~ 
    \frac{6 \Delta}{\pi^2} 
        \Bigg[ 
            \frac{1}{\sqrt A}
            \frac{1}{\sqrt{At^2+Bt+C}}  
        \Bigg]_{t=Y}^{t=X} ~+~ O(\Delta^{1/2+\varepsilon})
        \\
    &= \frac{3\Delta}{\pi^2} \lrb{    \frac{1}{A\lrp{t - \frac{-B}{2A}}}}_{t=X}^{t=Y} ~+~ 
    \frac{6 \Delta}{\pi^2} 
        \Bigg[ 
            \frac{1}{A\lrp{t- \frac{-B}{2A}}}   
        \Bigg]_{t=Y}^{t=X} ~+~ O(\Delta^{1/2+\varepsilon}) \quad \text{(since $D=0$)}
        \\
    &= \frac{3\Delta}{\pi^2} \lrb{    \frac{-2}{2At+B}}_{t=X}^{t=Y} ~+~ O(\Delta^{1/2+\varepsilon}),
\end{align}
verifying \eqref{eqn:goal-equid-A>0,D=0}.

%%%

\noindent
\textbf{Part 2: $[X,Y] \subseteq [-\infty,\frac{-B}{2A})$. }

For this part, one can use an identical symmetry argument as in Theorem \ref{thm:equid-W--A>0,D>0} to reduce to Part 1 (for parameters $A,-B,C$).
\end{proof}

%%%%%%%%%%%%%%%%%%%%%%%%%%%%%

\section{Proof of Theorem \ref{thm:equid-WABC} for \texorpdfstring{$A<0$}{A<0}}
\label{sec:equid-WABC--A<0}

{
\renewcommand{\thetheorem}{\ref{thm:equid-WABC} (\(A<0,D>0\))}
\begin{theorem} \label{thm:equid-W--A<0,D>0}
Fix $(A,B,C) \in \RR^3$ with $A < 0$ and $D := B^2-4AC > 0$. Then
\begin{align}
    W_{\Delta}^{(A,B,C)} := \Big\{\frac{m}{n} ~:~ n\ge 1,~ \gcd(m,n)=1,~   0 < A m^2 + B mn + C n^2  \le \Delta
    \Big\}
\end{align}
is equidistributed over  $(\frac{-B+\sqrt{D}}{2A},~ \frac{-B-\sqrt{D}}{2A})$ with respect to the measure $d\mu = \frac{1}{At^2+Bt+C} \,dt$.
\end{theorem}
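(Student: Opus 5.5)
The plan follows the proof of Theorem \ref{thm:equid-W--A>0,D>0}, with the roles of the two endpoints swapped. Since $A<0$ and $D>0$, the roots $r_1 := \frac{-B+\sqrt D}{2A} < r_2 := \frac{-B-\sqrt D}{2A}$ bound $I=(r_1,r_2)$. On $I$, the quadratic $q(t):=At^2+Bt+C$ is positive, with maximum value $\frac{-D}{4A}$ at the vertex $t=\frac{-B}{2A}$. The goal is to show, for every $[X,Y]\subseteq(r_1,r_2)$,
\begin{align}
    \#\, W_{\Delta}^{(A,B,C)} \cap [X,Y] = \frac{3\Delta}{\pi^2}\Bigg[\frac{1}{\sqrt D}\log\lrp{\frac{t-r_1}{r_2-t}}\Bigg]_{t=X}^{t=Y} + O(\Delta^{1/2+\varepsilon}).
\end{align}
This is $\frac{3\Delta}{\pi^2}\mu([X,Y])$, because $\frac{1}{q(t)} = \frac{1}{A(t-r_1)(t-r_2)}$ and the antiderivative can be checked by partial fractions. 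As before, it suffices to treat $[X,Y]\subseteq[\frac{-B}{2A},r_2)$. The case $[X,Y]\subseteq(r_1,\frac{-B}{2A}]$ then follows from the symmetry $\frac mn\mapsto\frac{-m}{n}$, which gives $\#\,W_\Delta^{(A,B,C)}\cap[X,Y]=\#\,W_\Delta^{(A,-B,C)}\cap[-Y,-X]$, exactly as in Part 2 of Theorem \ref{thm:equid-W--A>0,D>0}.

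For $[X,Y]$ to the right of the vertex, $q$ is decreasing, so $q(Y)\le q(X)$. The condition $0<q(\frac mn)\le\frac{\Delta}{n^2}$ with $\frac mn\in[X,Y]$ is equivalent to $\frac mn\ge t_n := \frac{-B-\sqrt{D+4A\Delta/n^2}}{2A}$ (this is the right-hand root of $q(t)=\Delta/n^2$, and is real once $n^2 \ge \frac{-4A\Delta}{D}$). I split the sum over $n$ into three ranges:
\begin{enumerate}
\item For $n\le\sqrt{\Delta/q(X)}$, every fraction in $[X,Y]$ with denominator $n$ is counted. By Lemma \ref{lem:count-coprime-range} this contributes $\phi(n)(Y-X)+O(n^\varepsilon)$.
\item For $\sqrt{\Delta/q(X)}<n\le\sqrt{\Delta/q(Y)}$, the fractions in $[t_n,Y]$ are counted, contributing $\phi(n)(Y-t_n)+O(n^\varepsilon)$.
\item For larger $n$, nothing is counted.
\end{enumerate}
All $n$ involved are $O(\sqrt\Delta)$ because $Y<r_2$ is fixed, so the accumulated error is $O(\Delta^{1/2+\varepsilon})$.

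The main term is
\begin{align}
Y\Phi\lrp{\sqrt{\tfrac{\Delta}{q(Y)}}} - X\Phi\lrp{\sqrt{\tfrac{\Delta}{q(X)}}} + \frac{B}{2A}\sum\phi(n) + \frac{1}{2A}\sum\phi(n)\sqrt{D+\tfrac{4A\Delta}{n^2}},
\end{align}
where both sums run over the middle range. The first three pieces are handled by Lemma \ref{lem:sum-phi}. The last sum is handled by Lemma \ref{lem:sum-phi-sqrt} with $s_1=\frac1{q(X)}$ and $s_2=\frac1{q(Y)}$. That lemma allows $A\neq0$ of either sign, and its hypothesis $s_1\ge\frac{-4A}{D}$ holds precisely because $q\le\frac{-D}{4A}$. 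Note the sign change relative to the $A>0$ case: here $Y-t_n$ contributes $+\frac{1}{2A}\sqrt{\cdot}$ rather than $-\frac{1}{2A}\sqrt{\cdot}$.

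The step I expect to be the main obstacle is the final simplification, because of the signs. With $u=1/q(t)$ one has $\sqrt{Du+4A}=\frac{|2At+B|}{\sqrt{q(t)}}$, and here $2At+B\le0$ on the right half, so $|2At+B|=-(2At+B)$. As in the proof of Theorem \ref{thm:equid-W--A>0,D>0}, I need to check that the $\sqrt{Du^2+4Au}$ terms cancel against the $\Phi$ terms. What remains is $\frac{3\Delta}{\pi^2}\cdot\frac{2}{\sqrt D}\big[\log(\sqrt{Du}+\sqrt{Du+4A})\big]$, with the overall sign fixed by $\frac{1}{2A}\cdot\frac{4A}{\sqrt D}$. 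Rewriting $\sqrt D-(2At+B) = -2A(r_2-t)$ and using $q(t)=-A(t-r_1)(r_2-t)$, this logarithm becomes $\frac{1}{\sqrt D}\log\frac{t-r_1}{r_2-t}$ up to a constant that cancels in the bracket. I would verify this carefully, including the orientation of the limits, by checking that the result is increasing in $Y$. The half with $s_1=s_2$ at the vertex endpoint is harmless.
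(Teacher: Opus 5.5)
Your proposal is essentially the paper's proof. It uses the same reduction to $[X,Y]\subseteq[\frac{-B}{2A},r_2)$ via $\frac mn\mapsto\frac{-m}{n}$, and the same split of $n$ at $\sqrt{\Delta/q(X)}$ and $\sqrt{\Delta/q(Y)}$ with the right-hand root $t_n$. It applies Lemmas \ref{lem:count-coprime-range}, \ref{lem:sum-phi} and \ref{lem:sum-phi-sqrt} in the same way (with $s_1=1/q(X)\ge -4A/D$). It also gets the same cancellation of the $\Phi$-terms against the $\frac{1}{2A}\sqrt{Du^2+4Au}$ terms, using $\sqrt{Du+4A}=\frac{-(2At+B)}{\sqrt{q(t)}}$.

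There is one algebraic slip in the final step: $\sqrt D-(2At+B)=-2A(t-r_1)$, not $-2A(r_2-t)$, since the latter equals $\sqrt D+2At+B$. With your factorization you would get $\frac{1}{\sqrt D}\log\frac{r_2-t}{t-r_1}$, which decreases in $Y$; your planned monotonicity check would catch this. The correct factorization gives $\frac{2}{\sqrt D}\log\frac{-2A(t-r_1)}{\sqrt{q(t)}}=\frac{1}{\sqrt D}\log\frac{-4A(t-r_1)}{r_2-t}$, which is exactly your target formula up to an additive constant.
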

}
\begin{proof}
% Fix $[K_1, K_2] \subseteq (\frac{-B+\sqrt{D}}{2A},~ \frac{-B-\sqrt{D}}{2A})$ and $\varepsilon > 0$. 
To prove equidistribution, we show that for all $[X, Y] \subseteq (\frac{-B+\sqrt{D}}{2A},~ \frac{-B-\sqrt{D}}{2A})$,
\begin{align} \label{eqn:goal-equid-A<0,D>0} \tag{$*{*}{*}{*}*$}
    \#\, W_{\Delta}^{(A,B,C)} \cap [X,Y] = 
    \frac{3\Delta}{\pi^2}
    \Bigg[ 
        \frac1{\sqrt D}
        \log\lrp{
        -\frac{
            t - \frac{-B+\sqrt{D}}{2A}
        }{
            t - \frac{-B-\sqrt{D}}{2A}
        }
        }
    \Bigg]_{t=X}^{t=Y} 
         ~+~ O(\Delta^{1/2+\varepsilon}).
\end{align}

%%%%%%%%%%%%%%%%%%%%%%%%%%%%%%%%%%%

It suffices to show \eqref{eqn:goal-equid-A<0,D>0} for $[X,Y] \subseteq [\frac{-B}{2A}, \frac{-B-\sqrt{D}}{2A})$ and for $[X,Y] \subseteq (\frac{-B+\sqrt{D}}{2A}, \frac{-B}{2A}]$.

\noindent
\textbf{Part 1: $[X,Y] \subseteq [\frac{-B}{2A}, \frac{-B-\sqrt{D}}{2A})$.}

In the following, we will only consider $n \ge 1,~ \gcd(m,n)=1$, and omit these assumptions from our set notation.
For this part, we have
\begin{align}
    W_{\Delta}^{(A,B,C)} \cap [X,Y] 
    &= \lrcb{X \le \frac{m}{n} \le Y ~:~  0 < A \lrp{\frac{m}{n}}^2 + B \lrp{\frac{m}{n}} + C  \le \frac{\Delta}{n^2}
    } \\
    &= \lrcb{X \le \frac{m}{n} \le Y ~:~  \frac{m}{n} \ge \frac{-B-\sqrt{\max\big(0,D+4A\frac{\Delta}{n^2}\big)}}{2A}
    }. 
\end{align}
Then note that
\begin{align}
    &X \ge  \frac{-B-\sqrt{\max\big(0,D+4A\frac{\Delta}{n^2}\big)}}{2A}
     \qquad \text{iff}\qquad  n \le \sqrt{\frac{\Delta}{AX^2+BX+C}},
     \\
    &Y \ge  \frac{-B-\sqrt{\max\big(0,D+4A\frac{\Delta}{n^2}\big)}}{2A}
     \qquad \text{iff}\qquad  n \le \sqrt{\frac{\Delta}{AY^2+BY+C}},
\end{align}
which means that
\begin{align}
    & \#\, W_{\Delta}^{(A,B,C)} \cap [X,Y] \\
    &=
    \sum_{ n \le \sqrt{\frac{\Delta}{AX^2+BX+C}} }
    \# \lrcb{X \le \frac{m}{n} \le Y
    }
    \\
    &\quad+
    \sum_{
    \sqrt{\frac{\Delta}{AX^2+BX+C}} < n \le \sqrt{\frac{\Delta}{AY^2+BY+C}}}
    \#\lrcb{\frac{-B-\sqrt{D+4A\frac{\Delta}{n^2}}}{2A}  
    \le \frac{m}{n} \le Y
    } \\
    &=
    \sum_{ n \le \sqrt{\frac{\Delta}{AX^2+BX+C}} } 
    \lrb{
        \phi(n) \Big(Y-X\Big) + O(n^{\varepsilon})
    } \qquad \text{uniformly in $n$ \qquad\qquad (Lemma \ref{lem:count-coprime-range})} \\
    &\quad+
    \sum_{ \sqrt{\frac{\Delta}{AX^2+BX+C}} < n \le \sqrt{\frac{\Delta}{AY^2+BY+C}} }
    \lrb{\phi(n) \lrp{ 
        Y - 
        \frac{-B-\sqrt{D+4A\frac{\Delta}{n^2}}}{2A}
    }
    + O(n^{\varepsilon})
    }  \\
    &=
    \sum_{ n \le \sqrt{\frac{\Delta}{AY^2+BY+C}}} 
    \phi(n)Y \ \ - \sum_{ n \le \sqrt{\frac{\Delta}{AX^2+BX+C}}} 
    \phi(n)X \quad  \\
    &\quad-
    \sum_{ \sqrt{\frac{\Delta}{AX^2+BX+C}} < n \le \sqrt{\frac{\Delta}{AY^2+BY+C}} }
    \phi(n)  
    \frac{-B-\sqrt{D+4A\frac{\Delta}{n^2}}}{2A}
    \quad+ \quad O(\Delta^{1/2+\varepsilon})\\
    &= \frac{3}{\pi^2} \lrb{ t  \frac{\Delta}{At^2+Bt+C}}_{t=X}^{t=Y} ~-~ \frac{-B}{2A}\frac{3}{\pi^2} \lrb{  \frac{\Delta}{At^2+Bt+C}}_{t=X}^{t=Y} \qquad\quad \text{(Lemmas \ref{lem:sum-phi} and \ref{lem:sum-phi-sqrt})}
    \\
    &\quad +~ \frac{1}{2A} \frac{3\Delta}{\pi^2}\Bigg[ \sqrt{Du^2+4Au} +  \frac{4A}{\sqrt D}
            \log\lrp{
                \sqrt{Du} + \sqrt{Du+4A}
            }
         \Bigg]_{u=\frac{1}{AX^2+BX+C}}^{u=\frac{1}{AY^2+BY+C}} ~+~ O(\Delta^{1/2+\varepsilon}) \\
    &= \frac{3\Delta}{\pi^2} \lrb{ \frac{t-\frac{-B}{2A}}{At^2+Bt+C}}_{t=X}^{t=Y} 
    ~+~ \frac{3\Delta}{\pi^2} \lrb{
        \frac{1}{2A} \sqrt{Du^2+4Au}
    }_{u=\frac{1}{AX^2+BX+C}}^{u=\frac{1}{AY^2+BY+C}}
    \\
    &\quad +~ \frac{3\Delta}{\pi^2}\Bigg[
            \frac{2}{\sqrt D}
            \log\lrp{
                \sqrt{Du} + \sqrt{Du+4A}
            }
         \Bigg]_{u=\frac{1}{AX^2+BX+C}}^{u=\frac{1}{AY^2+BY+C}} ~+~ O(\Delta^{1/2+\varepsilon}). 
\end{align}

Observe here that for $u = \frac{1}{At^2+Bt+C}$ with $t \ge \frac{-B}{2A}$, 
\begin{align}
    \sqrt{Du +4A} &= \sqrt{\frac{(B^2-4AC) + 4A(At^2+Bt+C)}{At^2+Bt+C}} = \sqrt{\frac{B^2 + 4A^2t^2 + 4ABt}{At^2+Bt+C}} = \frac{-2At-B}{\sqrt{At^2+Bt+C}},
\end{align}
so that
\begin{align}
    \lrb{
        \frac{1}{2A} \sqrt{Du^2+4Au}
    }_{u=\frac{1}{AX^2+BX+C}}^{u=\frac{1}{AY^2+BY+C}} = \lrb{\frac{-(t-\frac{-B}{2A})}{At^2+Bt+C}}_{t=X}^{t=Y}.
\end{align}
This then means that
\begin{align}
    & \#\, W_{\Delta}^{(A,B,C)} \cap [X,Y] \\
    &=  \frac{3\Delta}{\pi^2}\Bigg[ 
            \frac{2}{\sqrt D}
            \log\lrp{
                \sqrt{Du} + \sqrt{Du+4A}
            }
         \Bigg]_{u=\frac{1}{AX^2+BX+C}}^{u=\frac{1}{AY^2+BY+C}} ~+~ O(\Delta^{1/2+\varepsilon}) \\
    &=  \frac{3\Delta}{\pi^2}\Bigg[ 
            \frac{2}{\sqrt D}
            \log\lrp{\frac{\sqrt{D} - 2At-B}{\sqrt{At^2+Bt+C}}}  
         \Bigg]_{t=X}^{t=Y} 
        ~+~ O(\Delta^{1/2+\varepsilon}) \\
    &=  \frac{3\Delta}{\pi^2}\Bigg[ 
            \frac{2}{\sqrt D} 
            \log\lrp{\frac{-2A\lrp{
                t - \frac{-B+\sqrt{D}}{2A}
            }}{\sqrt{At^2+Bt+C}}}
         \Bigg]_{t=X}^{t=Y} 
         ~+~ O(\Delta^{1/2+\varepsilon}) \\
    &=  \frac{3\Delta}{\pi^2}\Bigg[ 
            \frac{1}{\sqrt D} 
            \log\lrp{\frac{4A^2\lrp{
                t - \frac{-B+\sqrt{D}}{2A}
            }^2}{At^2+Bt+C}}
         \Bigg]_{t=X}^{t=Y} 
         ~+~ O(\Delta^{1/2+\varepsilon}) \\
    &=  \frac{3\Delta}{\pi^2}
    \Bigg[ 
        \frac{1}{\sqrt D}
        \log\lrp{
        -\frac{
            t - \frac{-B+\sqrt{D}}{2A}
        }{
            t - \frac{-B-\sqrt{D}}{2A}
        }
        }
    \Bigg]_{t=X}^{t=Y} 
         ~+~ O(\Delta^{1/2+\varepsilon}),
\end{align}
verifying \eqref{eqn:goal-equid-A<0,D>0}.

%%%%%%%%%%%%%%%%%%%%%%%%%%%%%%%

\noindent
\textbf{Part 2: $[X,Y] \subseteq (\frac{-B+\sqrt{D}}{2A}, \frac{-B}{2A}]$. }

For this part, one can use an identical symmetry argument as in Theorem \ref{thm:equid-W--A>0,D>0} to reduce to Part 1 (for parameters $A,-B,C$).
\end{proof}

\section{Discussion}
\label{sec:discussions}

We discuss some applications of our results. First, we observe that our work yields an asymptotic formula for the number of CM points on closed geodesics in $\SL_2(\ZZ)\backslash \HH$.
\begin{proposition}
    Fix $\varepsilon > 0$.
    Then for any closed geodesic $\overline G$ in $\SL_2(\ZZ)\backslash \HH$ of discriminant $D$, the number of CM points on $\overline G$ is given by
    \begin{align}
        &\# \{\textnormal{CM points on $\overline G$ with discriminant of magnitude $\le \Delta$} \} \\
        =\,&\frac{3 \gcd(D,2)\, \mathrm{length}(\overline G) }{2\pi^2 \sqrt D} \Delta + O(\Delta^{1/2+\varepsilon}).
    \end{align}
\end{proposition}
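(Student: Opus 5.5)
The plan is to lift $\overline G$ to a rational geodesic in $\HH$ and count CM points on a fundamental segment of that lift, using the quantitative asymptotic (with error term) proved in Case 2 of Theorem \ref{thm:equid-CM-geodesic} rather than just the equidistribution statement.

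\textbf{Lifting and fundamental segment.} A closed geodesic $\overline G$ in $\SL_2(\ZZ)\backslash\HH$ of discriminant $D$ lifts to a rational geodesic $G=G_{(A_0,B_0,C_0)}$ with $D=B_0^2-4A_0C_0>0$. Here $D$ is a non-square, since the endpoints are the conjugate quadratic irrationals fixed by a hyperbolic element. In particular $A_0\neq 0$, so $G$ is a semicircle. Its stabilizer in $\SL_2(\ZZ)$, modulo $\pm 1$, is infinite cyclic, generated by a hyperbolic $\gamma$ fixing both endpoints. Choose a point $z_1\in G$ and let $S=[z_1,\gamma z_1)$ be the half-open arc of $G$. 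Then $S$ maps bijectively onto $\overline G$ (counted as a parametrized closed curve), and its hyperbolic length is $\mathrm{length}(\overline G)$.

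Every $\SL_2(\ZZ)$ element preserves discriminants of CM points. Hence CM points on $\overline G$ of discriminant $|D'|\le\Delta$ correspond bijectively to CM points of $\CM^G_\Delta$ lying in $S$. This uses the convention that self-intersection points of $\overline G$ are counted with multiplicity; otherwise they change the count only by a bounded amount. Passing between the half-open arc and the closed arc costs $O(1)$. In the $\theta=\arg_q$ coordinate, $S$ is a compact interval $[\theta_1,\theta_2]\subset(0,\pi)$, possibly with orientation reversed.

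\textbf{Quantitative count.} Next I go back into Case 2 of the proof of Theorem \ref{thm:equid-CM-geodesic}. There $\arg_q\CM^G_\Delta=\theta(W_\Delta^{(-A,-B,-C)})$, where $\theta(t)=\arccos\bigl((-B-2At)/\sqrt{D^*}\bigr)$ and $D^*=16D/\gcd(D,2)^2$. This map is a monotone bijection, so the arc $[\theta_1,\theta_2]$ pulls back to a compact interval $[X,Y]$ inside $\bigl(\frac{-B-\sqrt{D^*}}{2A},\frac{-B+\sqrt{D^*}}{2A}\bigr)$. The proof of Theorem \ref{thm:equid-W--A<0,D>0} does not merely prove equidistribution: it gives the asymptotic
\[
\#\,W^{(-A,-B,-C)}_\Delta\cap[X,Y]=\frac{3\Delta}{\pi^2}\int_X^Y d\mu+O(\Delta^{1/2+\varepsilon}).
\]
Changing variables as in that proof, $d\mu$ becomes $\frac{2}{\sqrt{D^*}}\frac{d\theta}{\sin\theta}=\frac{2}{\sqrt{D^*}}\,ds_{\mathrm{hyp}}$. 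Therefore $\int_X^Y d\mu=\frac{2}{\sqrt{D^*}}\mathrm{length}(S)=\frac{\gcd(D,2)}{2\sqrt D}\,\mathrm{length}(\overline G)$. Multiplying by $3\Delta/\pi^2$ gives exactly the claimed main term, and the error term is inherited.

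\textbf{Main obstacle.} I expect the delicate step to be the bookkeeping in the first part. One must check that the correspondence between CM points on $\overline G$ and CM points in the fundamental arc $S$ is exactly one-to-one, and that it respects the discriminant bound. The worry is CM points with nontrivial stabilizer, namely the orbits of $i$ and $e^{\pi i/3}$, and points where $\overline G$ crosses itself. My first attempt is to fix the convention of counting along the parametrized curve and to note that any discrepancy is $O(1)$ uniformly in $\Delta$, which is absorbed into the error term. The analytic input is already fully supplied by Theorem \ref{thm:equid-W--A<0,D>0}.
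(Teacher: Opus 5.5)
Your proposal is correct and follows the paper's argument. Like the paper, you represent $\overline G$ by the arc of a lift $G$ from $z_0$ to $\gamma z_0$, apply the explicit count $\frac{3\Delta}{\pi^2}\int_X^Y d\mu+O(\Delta^{1/2+\varepsilon})$ from the proof of the $A<0$, $D>0$ case, and convert $d\mu$ into $\frac{\gcd(D,2)}{2\sqrt D}\,ds_{\mathrm{hyp}}$.

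One correction to your side remark. Counting along the parametrized arc is the paper's convention. However, other conventions do not differ from it by only $O(1)$ in general. When $G$ passes through an $\SL_2(\ZZ)$-translate of $i$ (e.g.\ $G_{(1,1,-1)}$, which passes through $i$ and $\frac{1+i}{2}$), an elliptic involution reverses $G$. Then $\overline G$ traverses its image twice, so counting distinct image points would roughly halve the main term.
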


Let $\overline G$ be an arbitrary closed geodesic in $\SL_2(\ZZ)\backslash \HH$. It is well known that $\overline G$ can be written as the image of a geodesic $G = G_{(A,B,C)}$ in $\HH$, where $(A,B,C)$ is an indefinite primitive binary quadratic form such that $D = B^2-4AC > 0$ is a non-square. In particular, this means that $G$ is a semicircle rational geodesic in $\HH$. Now, in the quotient space $\SL_2(\ZZ)\backslash \HH$, $G$ wraps around several times on itself (i.e. the quotient map from $G$ to $\overline G$ is not injective). However, it turns out that one can exactly represent $\overline G$ in $\HH$ as a subinterval along $G$ (see, for example, \cite[Theorem 13.1.6]{ratcliffe}). 
In particular, let  
$$
\gamma := 
\left(
\begin{matrix}
  \frac{t_0-Bu_0}{2} & -Cu_0 \\
  Au_0 & \frac{t_0+Bu_0}{2}
\end{matrix}
\right),
$$
where $t_0,u_0$ are the smallest positive integer solutions to the Pell equation $t^2 - D u^2 = 4$. It turns out that this $\gamma$ is a generator for $\Gamma_{(A,B,C)}$: the stabilizer of $(A,B,C)$ in $\mathrm{PSL}_2(\ZZ)$. This then means that for any $z_0 \in G$, the closed geodesic $\overline G = \Gamma_{(A,B,C)} \backslash G$ in $\SL_2(\ZZ)\backslash \HH$ can be represented exactly in $\HH$ by the arc along $G$ connecting $z_0$ and $\gamma z_0$.

One can compute the length of $\overline G$ explicity as 
\begin{align} \label{eqn:length-G-bar-sycyle-integral}
    \mathrm{length}(\overline G) = \int_{\overline G} ds_\text{hyp} = 2 \log \varepsilon_D,
\end{align}
where $\varepsilon_D = \frac 12 (t_0 + u_0 \sqrt D)$ \cite{sarnak}.
The integral over $\overline G = \Gamma_{(A,B,C)} \backslash G$ here is called a \textit{cycle integral}, and is understood to run over the arc along $G$ connecting $z_0$ and $\gamma z_0$ for some $z_0 \in G$. See, for example, \cite[Section 2]{bengoechea-imamoglu} for precise details about cycle integrals. 

On the other hand, the results of this paper imply that the cycle integral of \eqref{eqn:length-G-bar-sycyle-integral} (over a geodesic arc of $G = G_{(A,B,C)}$) can be replaced with counting the number of CM points (over the same interval). Specifically, by the argument of Theorem \ref{thm:equid-CM-geodesic}, we have that
\begin{align}
    &\# \{
    \text{CM points on $\overline G$ with discriminant of magnitude $\le \Delta$} \} \\
    &= \frac{3\Delta}{\pi^2} \int_{\overline G} d\mu(t) + O(\Delta^{1/2+\varepsilon})  \qquad \qquad\ \ \text{(evident range for $t$)}
    \\
    &= \frac{3\Delta}{\pi^2} \int_{\overline G} \frac{2}{\sqrt{16\gcd(D,2)^{-2} D}} \frac{1}{\sin \theta} d\theta + O(\Delta^{1/2+\varepsilon})
    \qquad \text{(evident range for $\theta$)}
    \\
    &= \frac{3\gcd(D,2)\Delta}{2\pi^2 \sqrt D} \int_{\overline G} \frac{1}{\sin \theta} d\theta + O(\Delta^{1/2+\varepsilon}) \\
    &= \frac{3\gcd(D,2)\,\mathrm{length}(\overline G) }{2\pi^2 \sqrt D} \Delta + O(\Delta^{1/2+\varepsilon}),
\end{align}
as claimed.

The same method can also be used to compute more general cycle integrals. Let $f$ be a modular function for $\SL_2(\ZZ)$. For every real quadratic irrationality $w \in \QQ(\sqrt{D})$, one can associate the semicircle rational geodesic $G = G_{(A,B,C)}$ connecting $w$ and its conjugate $w'$. One defines the ``value" of $f$ at $w$ to be the cycle integral
\begin{align}
    f(w) := \int_{\overline G} f(z) ds_\mathrm{hyp},
    %= \int_{\Gamma_{(a,b,c)}\backslash G} f(z) \frac{\sqrt{D} \,dz}{az^2+bz+c}.
\end{align}
where as above $\overline{G}$ denotes the image of $G$ in $\SL_2(\ZZ)\backslash \HH$.
These ``values" have been studied in various contexts. For example, \cite{duke-imamoglu-toth} relates the ``values" of the $j$-function to the Fourier coefficients of a weight $1/2$ weakly harmonic modular form, and \cite{bengoechea-imamoglu} establishes a certain convergence property of these ``values" conjectured by \cite{kaneko}.

We would like to point out here that our work yields an alternative formula for these ``values" of $f$. Specifically, by a similar argument as before, we have the following.
\begin{proposition} \label{prop:"value"-of-f-formula}
    Let $w$ be a real quadratic irrationality with associated geodesic $G=G_{(A,B,C)}$ of discriminant $D$. Denote $\CM^{* \overline G}_\Delta$ to be the set of all CM points on $\overline G = \Gamma_{(A,B,C)}\backslash G_{(A,B,C)}$ with discriminant of magnitude $\le \Delta$.
    Then the ``value" of $f$ at $w$ is given by 
    \begin{align}
        f(w) = \lim_{\Delta \to \infty} \frac{2\pi^2 \sqrt D}{3 \gcd(D,2)\Delta} 
        \sum_{
            p \in \CM^{* \overline G}_\Delta
        } f(p).
    \end{align}
\end{proposition}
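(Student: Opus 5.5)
We pass from the counting asymptotic behind Theorem \ref{thm:equid-CM-geodesic} to a Weyl-type limit. Fix a fundamental arc of $\overline G$ on $G=G_{(A,B,C)}$: the arc from $z_0$ to $\gamma z_0$, with $z_0$ chosen so that no CM point lies at an endpoint. In the coordinate $\theta=\arg_q$, this arc is an interval $[\theta_a,\theta_b]\subseteq(0,\pi)$. Since $\Gamma_{(A,B,C)}$ preserves discriminants and acts freely on $G$, the CM points on $\overline G$ of discriminant $|D'|\le\Delta$ are in bijection with $\CM^G_\Delta\cap[\theta_a,\theta_b]$ (a half-open arc, say). Also $f(z)$ restricted to $G$ is a continuous function $g(\theta)$ on $[\theta_a,\theta_b]$, being a modular function that is holomorphic on $\HH$.

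\textbf{Step 1 (counting on subintervals).} The proof of Theorem \ref{thm:equid-CM-geodesic}, Case 2, together with the explicit count in the proof of Theorem \ref{thm:equid-W--A<0,D>0}, gives the following for every subinterval $[\alpha,\beta]\subseteq[\theta_a,\theta_b]$:
\begin{align}
\#\,\arg_q\CM^G_\Delta\cap[\alpha,\beta]=\frac{3\Delta}{\pi^2}\cdot\frac{2}{\sqrt{16\gcd(D,2)^{-2}D}}\int_\alpha^\beta\frac{d\theta}{\sin\theta}+O(\Delta^{1/2+\varepsilon}).
\end{align}
The leading coefficient is $\frac{3\gcd(D,2)\Delta}{2\pi^2\sqrt D}$, and the integral is $\int ds_{\mathrm{hyp}}$ over the subarc. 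This is exactly the computation already done for the first proposition, now applied to an arbitrary subarc. The change of variables $t\mapsto\theta(t)$ is monotone, so intervals correspond to intervals.

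\textbf{Step 2 (Riemann sums).} Partition $[\theta_a,\theta_b]$ into $k$ subintervals $J_1,\dots,J_k$ of small mesh. Bound $\sum_{p}f(p)$ by upper and lower Darboux-type sums, $\sum_j(\sup_{J_j}g)\,\#(\CM^G_\Delta\cap J_j)$ and the corresponding infimum version. This requires $f$ to be real-valued; otherwise treat the real and imaginary parts separately. Multiply by $\frac{2\pi^2\sqrt D}{3\gcd(D,2)\Delta}$ and let $\Delta\to\infty$ with $k$ fixed. Step 1 shows that the normalized counts converge to $\int_{J_j}ds_{\mathrm{hyp}}$, so the $\limsup$ and $\liminf$ of the normalized sum lie between the upper and lower Riemann sums of $\int g\,ds_{\mathrm{hyp}}$. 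Letting the mesh go to $0$, uniform continuity of $g$ on the compact arc shows that both Riemann sums converge to $\int_{\overline G}f\,ds_{\mathrm{hyp}}=f(w)$.

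\textbf{Main obstacle.} The delicate point is the identification in the first paragraph: one must check that the normalization of CM points (normalized forms, with the discriminant $b^2-4ac$ unchanged under $\gamma$) makes counting on $\overline G$ the same as counting on the fundamental arc, with no multiplicity issues. Fixed points of $\SL_2(\ZZ)$ could cause trouble, but $\gamma$ is hyperbolic and acts freely on $G$, so each orbit meets a half-open fundamental arc exactly once. The other possible concern is whether the error $O(\Delta^{1/2+\varepsilon})$ is uniform in the subinterval. Uniformity is not needed here, because $k$ is fixed before $\Delta\to\infty$.
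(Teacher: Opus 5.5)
Your proposal is correct and follows the same route as the paper. You take the explicit count $\frac{3\Delta}{\pi^2}\mu([X,Y])+O(\Delta^{1/2+\varepsilon})$ from the proof of Theorem \ref{thm:equid-W--A<0,D>0}, transport it to subarcs of a half-open fundamental arc of $\overline G$ via the monotone change of variables $\theta(t)$ (with leading constant $\frac{3\gcd(D,2)}{2\pi^2\sqrt D}$), and then pass from indicator functions of subarcs to continuous $f$ by the standard Riemann-sum argument. The paper compresses all of this into ``by a similar argument as before,'' so your Step 2 and your explicit assumption that $f$ is continuous along $G$ (which the cycle integral defining $f(w)$ requires anyway) simply make explicit what the paper leaves implicit.
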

This proposition means that the ``value" of $f$ at a real quadratic irrationality $w$ can be interpreted as the average value of $f$ over the CM points on the geodesic associated to $w$. It is standard to refer to $f(w)$ as the ``value" of $f$ at $w$. However, we remark here that $f(w)$ could perhaps more naturally be understood as the ``value" of $f$ at the RM curve $\langle A,B,C \rangle = G_{(A,B,C)}$.

Finally, we comment on an interesting open problem.
The results of this paper show that CM points are dense (and moreover equidistributed) along all rational geodesics in $\HH$. CM points are not dense along irrational geodesics, on the other hand, since each such geodesic can contain at most one CM point. We also give an example of a non-geodesic curve containing a dense (and moreover equidistributed) subset of CM points.
\begin{example}
    Let $L$ denote the horizontal line in $\HH$ of imaginary part $1$, and $\mathrm{CM}^L_\Delta$ denote the set of all CM points on $L$ with discriminant $|D| \le \Delta$. Then it is straighforward to verify that
    \begin{align}
        \mathrm{CM}^L_\Delta = \{ [n^2,-2mn,n^2+m^2]=\tfrac{m}{n}+i ~:~ 1 \le n \le (\Delta/4)^{1/4}, ~ m \in \mathbb{Z}, ~ \gcd(m,n)=1\}.
    \end{align}
    In particular, this means that $\CM^L_\Delta$ is equidistributed over $L$ with respect to the uniform measure $dx$ (or equivalently with respect to the hyperbolic metric $ds_\mathrm{hyp}$) as $\Delta \to \infty$.
\end{example}

It would be a very interesting problem to classify all algebraic curves in $\HH$ containing a dense (or equidistributed) subset of CM points. This would be an analog of the famous Andr\'e-Oort conjecture \cite{andre-oort}, which gives a complete classification of subvarieties of Shimura varieties containing a dense subset of special points.

\bibliographystyle{plain}
\bibliography{bibliography.bib}

\end{document}